\newtheorem{lemma}{Lemma}
\newtheorem{theorem}[lemma]{Theorem}
\newtheorem{conjecture}[lemma]{Conjecture}
\newtheorem{claim}[lemma]{Claim}
\title{Locating any two vertices on Hamiltonian cycles{\footnote {Supported by NSFC (Grant Nos. 11601093 and 11671296).}}}
\author{Weihua He $^{a,b}$,~ Hao Li $^{b,c}${\footnote {Corresponding author, E-mail:
li@lri.fr}},~ Qiang Sun $^{b}$\\ {\small $a.$ Department of Applied Mathematics,}\\ {\small Guangdong University of Technology, Guangzhou, China}\\ {\small $b.$ Laboratoire de Recherche
en Informatique,}\\ \small {UMR 8623, C.N.R.S.-Universit\'e Paris-sud, 91405 Orsay, France}\\
\mbox{}\hspace{0.15cm}\small {$c.$ Institute for Interdisciplinary Research,}\\
\small {Jianghan University, Wuhan, China}
}
\date{}
\begin{document}
\maketitle

\begin{abstract}
In this paper we give a proof of Enomoto's conjecture for graphs of sufficiently large order. Enomoto's conjecture states that, if $G$ is a graph of order $n$ with minimum degree $\delta(G)\geq \frac{n}{2}+1$, then for any pair of vertices $x$, $y$ in $G$, there is a Hamiltonian cycle $C$ of $G$ such that $d_C(x,y)=\lfloor \frac{n}{2}\rfloor$. The main tools of our proof are Regularity Lemma of Szemer\'{e}di and Blow-up Lemma of Koml\'{o}s et al.
\end{abstract}

{\bf Keywords}: Hamiltonian cycle; Enomoto's conjecture; Regularity Lemma; Blow-up Lemma

\section{Introduction}

In this paper, a {\em graph} $G=(V(G),E(G))$ will be a finite undirected graph without loops or multiple edges. For any vertex $v$ of $G$ and a subset $X$ of $V(G)$, we denote the degree of $v$ in $G$ by $deg_G(v)$ and the degree of $v$ in $X$ by $deg_G(v,X)$ (if no ambiguity arises, we denote them by $deg(v)$ and $deg(v,X)$ respectively). A {\em Hamiltonian cycle} is a spanning cycle, i.e., the cycle visits each vertex of the graph exactly once. A graph is called {\em Hamiltonian} if it has a Hamiltonian cycle. For two vertices $u,v\in V(G)$, the {\em distance} $dist_G(u,v)$ is defined as the number of edges in a shortest path joining them in $G$. For terminology and notation not defined here, we refer to \cite{bondy}.

A graph is called a {\em Dirac graph} if the degree of every vertex is at least half of the order of the graph. In 1952, Dirac showed that the Dirac graph is Hamiltonian. Many results have been obtained in generalization of Dirac's theorem (see \cite{gouldsurvey}, \cite{dirac} for the surveys).

There are plenty of results to strengthen Dirac's theorem. One of the most interesting research area is to control the placement of a set of vertices on a Hamiltonian cycle such that these vertices have some certain distances among them on the Hamiltonian cycle. In 2001, Kaneko and Yoshimoto \cite{kaneko} showed that in a Dirac graph given any sufficiently small subset $S$ of vertices, there exists a Hamiltonian cycle $C$ such that the distances on $C$ between successive pairs of vertices of $S$ have a uniform lower bound.

\begin{theorem}\cite{kaneko}
Let $G$ be a graph of order $n$ with $\delta(G)\geq \frac{n}{2}$, and let $d$ be a positive integer such that $d\leq \frac{n}{4}$. Then, for any vertex subset $S$ with $|S|\leq \frac{n}{2d}$, there is a Hamiltonian cycle $C$ such that $dist_C(u,v)\geq d$ for any $u,v\in S$.
\end{theorem}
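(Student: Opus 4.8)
\medskip
\noindent\textbf{Proof proposal.} The plan is to first \emph{reserve}, for each vertex of $S$, a short path that will serve as one side of that vertex's gap on the final cycle, and then to extend the whole collection of reserved paths to a Hamiltonian cycle of $G$; the reservation forces the required distances essentially for free, so all of the real work is in the extension step. If $d=1$ the statement is Dirac's theorem and if $|S|\le 1$ there is nothing to prove, so I may assume $d\ge 2$ and $s:=|S|\ge 2$, and write $S=\{v_1,\dots,v_s\}$, fixing this as the cyclic order in which the vertices of $S$ will appear on the cycle that is built.

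\emph{Step 1 (reserving stubs).} First I would build pairwise vertex-disjoint paths $P_1,\dots,P_s$ with $v_i$ an endpoint of $P_i$, with $|V(P_i)|=d$, and with $V(P_i)\cap S=\{v_i\}$. These are built one edge at a time in any order: if $u$ is the current endpoint of the stub being extended and $U:=\bigcup_j V(P_j)$ is the current used set, then $S\subseteq U$ throughout (each $v_j$ lies in $P_j$ from the outset), and $|U|\le sd-1\le \tfrac n2-1$ until the construction is complete, using $sd\le \tfrac n2$ from the hypothesis $|S|\le \tfrac n{2d}$; since $\deg_G(u)\ge \tfrac n2>|U|$, the vertex $u$ has a neighbour outside $U$, which we append. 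Let $w_i$ be the other endpoint of $P_i$ (so $w_i\ne v_i$ because $d\ge 2$) and set $R:=V(G)\setminus U$, a set of $n-sd\ge \tfrac n2$ ``free'' vertices.

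\emph{Step 2 (extending to a Hamiltonian cycle).} Here I would prove that $G$ has a Hamiltonian cycle $C$ containing every $P_i$ as a subpath and meeting them in the cyclic order $P_1,\dots,P_s$; equivalently, that the free set $R$ can be used as the interiors of paths $B_1,\dots,B_s$, where $B_i$ runs from $w_i$ to $v_{i+1}$ (indices mod $s$) and each vertex of $R$ is interior to exactly one $B_i$. Granting this, the arc of $C$ from $v_i$ to $v_{i+1}$ through $P_i$ and $B_i$ has length $(d-1)+|E(B_i)|\ge d$; since each of the two arcs of $C$ between any two vertices of $S$ is a concatenation of at least one such arc, it has length at least $d$, so $dist_C(u,v)\ge d$ for all $u,v\in S$, which is exactly the conclusion. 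A natural first attempt at Step 2 is to contract each $P_i$ to a single vertex and find a Hamiltonian cycle in the contracted graph $G'$: a contracted stub keeps degree $\ge \tfrac n2-(d-1)$, which is comfortably above $|V(G')|/2$ once $s\ge 3$, and the stubs can be chosen in Step 1 so that no free vertex loses all its neighbours, making $G'$ close enough to a Dirac graph for an Ore/Chv\'atal-type criterion or a rotation--extension argument to apply.

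\emph{Main obstacle.} The crux is Step 2 in the extremal regime $sd=\tfrac n2$ (i.e.\ $|S|$ as large as allowed), where the slack in $\delta(G)\ge \tfrac n2$ vanishes: a free vertex could then have \emph{all} its neighbours among the reserved vertices, the contracted graph need not be Dirac, and — exactly as Dirac graphs fail to be Hamiltonian-connected — one cannot simply splice the stubs together greedily. I expect to deal with this either by (i) choosing the reserved stubs in Step 1 with additional care, so that $G[R]$ and the bipartite graph between $R$ and $U$ are rich enough, combined with a P\'osa-type rotation--extension on the path being assembled (tracking the set of endpoints reachable by rotations, which the degree condition keeps from stalling), or, if that fails, by (ii) isolating the few near-equality configurations — bipartite-type graphs such as $K_{n/2,n/2}$, and graphs assembled from two nearly complete halves — and verifying the statement for them directly, where the explicit structure makes the placement of $S$ routine.
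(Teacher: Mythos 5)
This theorem is quoted by the paper from Kaneko--Yoshimoto and is not proved here, so there is no in-paper proof to compare against; I can only assess your proposal on its own terms. The reduction in Step 1 and the distance bookkeeping at the start of Step 2 are fine: greedily grown disjoint stubs of $d$ vertices fit because $sd\le n/2$, and once a Hamiltonian cycle contains the stubs in the prescribed cyclic order, every arc between consecutive members of $S$ has length at least $d$, hence so does every arc between any two members.

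The gap is that Step 2 \emph{is} the theorem, and none of the routes you sketch for it goes through. Finding a Hamiltonian cycle through a prescribed linear forest with $k$ edges in general requires minimum degree about $(n+k)/2$ (Kronk/P\'osa-type results), and here $k=s(d-1)$ can be nearly $n/2$, so $\delta(G)\ge n/2$ is nowhere near enough to route an \emph{arbitrarily} chosen stub system; the stubs must be built using global structure of $G$, not greedily. The contraction idea fails on two counts: a free vertex may retain only edges into stub interiors (so $G'$ need not be anywhere near Dirac --- your own computation of the contracted stub's degree addresses the wrong vertices, and already for $d\ge 4$ the generic count fails), and even when $G'$ is Hamiltonian, a cycle of $G'$ need not lift, since it must enter and leave each contracted stub through the two \emph{distinct} endpoints $v_i,w_i$ rather than twice through the neighbourhood of one of them. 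Finally, the ``near-equality configurations'' are not a finite list to check by hand but a whole family of graphs close to $K_{n/2,n/2}$ or to two joined cliques, and handling them robustly is where essentially all of the work in the actual Kaneko--Yoshimoto proof lies. As written, the proposal is an honest plan with its central step missing, not a proof.
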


In 2008, S\'{a}rk\"{o}zy and Selkow \cite{sarkozy} showed that almost all of the distances between successive pairs of vertices of $S$ can be specified almost exactly.

\begin{theorem}\cite{sarkozy}\label{sarkozy}
There are $\omega , n_0>0$ such that if $G$ is a graph with $\delta(G)\geq \frac{n}{2}$ on $n\geq n_0$ vertices, $d$ is an arbitrary integer with $3\leq d\leq \frac{\omega n}{2}$ and $S$ is an arbitrary subset of $V(G)$ with $2\leq |S|=k\leq \frac{\omega n}{2}$, then for every sequence of integers with $3\leq d_i\leq d$, and $1\leq i\leq k-1$, there is a Hamiltonian cycle $C$ of $G$ and an ordering of the vertices of $S$, $a_1,a_2,...,a_k$, such that the vertices of $S$ are encountered in this order on $C$ and we have $|dist_C(a_i,a_{i+1})-d_i|\leq 1$, for all but one $1\leq i\leq k-1$.
\end{theorem}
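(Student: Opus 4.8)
The plan is to run the standard Regularity--Blow-up machinery, treating the bulk of the cycle as one long ``highway'' and the prescribed distances as short local detours. First I would apply Szemer\'edi's Regularity Lemma to $G$ with a small parameter $\varepsilon\ll\omega$, obtaining clusters $V_1,\dots,V_m$ of equal size together with an exceptional set $V_0$ with $|V_0|\le\varepsilon n$. Form the reduced graph $R$ on the clusters, joining two clusters when they form an $\varepsilon$-regular pair of density at least, say, $2\varepsilon$. A routine degree count using $\delta(G)\ge n/2$ gives $\delta(R)\ge(\tfrac12-2\varepsilon)m$, so by Dirac's theorem $R$ contains a Hamiltonian cycle $\mathcal{C}_R=C_1C_2\cdots C_mC_1$; after deleting a few atypical vertices from each cluster one may assume each consecutive pair $(C_j,C_{j+1})$ is super-regular, and after re-balancing, all clusters have the same size with the tiny excess moved into $V_0$.

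Second, I would plan where the few ``special'' vertices go. Since $|S|+|V_0|$ is negligible compared with a single cluster, I can greedily reserve for each $v\in V_0$ a private slot: choose a cluster $C_j$ in which $v$ has many $G$-neighbours on both the $C_{j-1}$ and $C_{j+1}$ sides, and plan to splice $v$ in there, so all of $V_0$ is absorbed using only $O(\varepsilon n)$ vertices. For the set $S$, fix any ordering $a_1,\dots,a_k$ (legitimate because $R$ is highly connected), and for the stretch between $a_i$ and $a_{i+1}$ reserve a short ``detour'' occupying $O(d_i/|V_1|)$ consecutive clusters of $\mathcal{C}_R$ -- or, when $d_i$ is smaller than a cluster, a short path living inside one super-regular pair. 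Regularity (the standard connecting/path-of-prescribed-length lemma) lets me realize inside the reserved clusters an $a_i$--$a_{i+1}$ path of essentially any prescribed length in the admissible range; the only rigidity is parity, since a path alternating between two clusters changes length in steps of $2$. This is exactly why the statement allows an error of $\pm1$ for each pair and one completely free pair: the $\pm1$ slack absorbs the parity of each individual detour, and the single exceptional pair is spent fixing the global parity of ``everything left over.''

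Third, after deleting the vertices used in these detours and $V_0$-insertions, what remains in each super-regular pair along $\mathcal{C}_R$ is a nearly balanced bipartite piece with prescribed endpoints dictated by where the detours plugged in. Here I would invoke the Blow-up Lemma of Koml\'os, S\'ark\"ozy and Szemer\'edi: a super-regular pair admits a spanning Hamiltonian path between (almost) any prescribed endpoints. Stitching these cluster-pair Hamiltonian paths together through the chosen connecting edges, inserting each $V_0$-vertex into its reserved slot, and splicing in the short $a_i$--$a_{i+1}$ detours in their planned positions, produces a Hamiltonian cycle $C$ of $G$ with $a_1,\dots,a_k$ in the required cyclic order and with $\mathrm{dist}_C(a_i,a_{i+1})$ within $1$ of $d_i$ for every $i$ except the one pair used to balance things out.

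The main obstacle is the simultaneous bookkeeping: the endpoints used to splice in $V_0$-vertices, the endpoints of the short $a_i$--$a_{i+1}$ detours, and the endpoints demanded by the Blow-up Lemma in each pair must all be drawn from the ``typical'' part of each cluster, kept pairwise disjoint, and chosen so that super-regularity survives every deletion -- which forces $\omega$ to be very small relative to $\varepsilon$ and the cluster size. Nearly as delicate is the exact length control: one must show that for any target in $[d_i-1,d_i+1]$ of the correct parity an $a_i$--$a_{i+1}$ path of precisely that length exists through the reserved clusters, and that the lone exceptional pair can absorb whatever residual discrepancy remains so that the final cycle is genuinely spanning. Everything else -- the degree estimate for $R$, the cluster Hamiltonian cycle, and the greedy absorption of the $O(\varepsilon n)$ exceptional vertices -- is routine once the parameter hierarchy $\omega\ll\varepsilon\ll 1/m$ is fixed.
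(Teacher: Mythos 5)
The paper does not prove this statement at all: it is Theorem \ref{sarkozy}, quoted verbatim from S\'ark\"ozy and Selkow \cite{sarkozy}, so there is no in-paper proof to compare against. Your sketch does follow the same general Regularity--Blow-up strategy that both the original proof and this paper's main argument use, but as written it contains a genuine gap.

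The gap is the step ``$\delta(R)\ge(\tfrac12-2\varepsilon)m$, so by Dirac's theorem $R$ contains a Hamiltonian cycle.'' The degree form of the Regularity Lemma only gives $\delta(R)\ge(\tfrac12-2\varepsilon)m$, which is strictly \emph{below} $m/2$, so Dirac's theorem does not apply; this loss is unavoidable and is precisely why every paper using this method (including the present one, see Claims \ref{mdor}--\ref{independent} and Section 5) must split into a non-extremal case and extremal cases. In the non-extremal case one proves additional structural facts about $R$ (small independence number, no two large mutually non-adjacent sets) that restore Hamiltonicity of $R$ despite the degree deficit. In the extremal cases --- $G$ close to $K_{n/2,n/2}$ or close to two disjoint cliques of order $n/2$ --- the reduced graph can fail to be Hamiltonian (indeed can be essentially disconnected), the ``highway'' picture collapses, and one needs separate ad hoc constructions; note also that with $\delta(G)\ge n/2$ exactly, $K_{n/2,n/2}$ itself is admissible and forces the parity constraints that the ``all but one'' clause exists to absorb. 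Your proposal handles the local parity bookkeeping correctly but omits the extremal-case dichotomy entirely, and without it the argument does not go through.
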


In \cite{sarkozy}, the authors believe that Theorem \ref{sarkozy} remains true for greater values of $d$ as well. And many years ago Enomoto proposed the following conjecture of exact placement for a pair of vertices at a precise distance (half of the graph order) on a Hamiltonian cycle.

\begin{conjecture}\cite{gouldsurvey}\label{enomoto}
If $G$ is a graph of order $n\geq 3$ and $\delta(G)\geq \frac{n}{2}+1$, then for any pair of vertices $x$, $y$ in $G$, there is a Hamiltonian cycle $C$ of $G$ such that $dist_C(x,y)=\lfloor \frac{n}{2}\rfloor$.
\end{conjecture}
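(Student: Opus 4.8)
The plan is to prove Enomoto's conjecture for all sufficiently large $n$ by combining the Regularity--Blow-up method with an extremal/stability dichotomy. First I would fix a small constant and apply the Regularity Lemma of Szemer\'edi to obtain a reduced graph $R$ on the cluster set, whose edges correspond to $\varepsilon$-regular pairs of density at least $d$. Because $\delta(G)\ge \frac n2+1$, a standard averaging argument shows $R$ has minimum degree essentially at least $\frac12|V(R)|$, hence (by the Hajnal--Szemer\'edi/Dirac-type theory on the reduced graph) $R$ contains a Hamilton cycle, and in fact a $2$-factor structure flexible enough to accommodate prescribed passages through designated clusters. The idea is then to build a Hamiltonian cycle $C$ of $G$ along this cycle of clusters: using the Blow-up Lemma, each super-regular pair can be spanned by a Hamilton path with prescribed endpoints, so one can route $C$ so that, starting from $x$, after traversing roughly half of the clusters one arrives exactly at $y$. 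Counting vertices cluster by cluster, the distance $d_C(x,y)$ can be tuned to hit $\lfloor n/2\rfloor$ on the nose, the $\pm1$ slack coming from the freedom to shift one vertex between the two arcs.

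The key steps, in order, would be: (1) set up constants $\varepsilon \ll d \ll 1$ and apply the Regularity Lemma; clean the reduced graph and verify $\delta(R)\gtrsim \tfrac12|V(R)|$; (2) find in $R$ a connected spanning structure (a Hamilton cycle, or a system of paths) together with two clusters $V_x\ni x$ and $V_y\ni y$ placed at combinatorial ``distance'' about half the total; (3) redistribute a bounded number of vertices between clusters so that each pair on the chosen structure becomes \emph{super-regular} and the two arcs between $x$ and $y$ have the correct sizes, using that moving $o(n)$ vertices preserves regularity; (4) apply the Blow-up Lemma on each arc to obtain spanning paths with the prescribed endpoints, then glue them through the connector edges of $R$ into a single Hamiltonian cycle $C$; (5) check that $d_C(x,y)=\lfloor n/2\rfloor$ exactly, adjusting by one vertex if parity forces it.

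I expect the main obstacle to be Step (2)--(3): controlling the \emph{exact} distance. The Regularity--Blow-up machinery naturally gives a Hamiltonian cycle and even approximate control of an arc length, but forcing $d_C(x,y)$ to equal $\lfloor n/2\rfloor$ precisely — not merely up to $\pm 1$ as in Theorem~\ref{sarkozy} — requires a genuinely new idea, since the cluster-level argument only controls lengths up to one cluster size, which is $\Theta(n)$. The resolution should be to do the length adjustment \emph{inside} the super-regular pairs: within a single super-regular cluster pair one has enough freedom, via the Blow-up Lemma with one vertex's position varied, to change the length of a spanning path by exactly $1$ while keeping the complementary structure spanning. Making this ``$\pm1$ tuning lemma'' precise, and ensuring the hypothesis $\delta(G)\ge \frac n2+1$ (rather than $\frac n2$) is exactly what rules out the parity obstruction that would otherwise make the target distance unattainable, is the crux of the argument.

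Separately, I would expect a genuinely extremal case to need handling: when $G$ is close to one of the extremal configurations (e.g.\ close to $K_{n/2,n/2}$ or to two cliques sharing few vertices), the reduced graph is disconnected or nearly bipartite and the Blow-up approach degenerates, so that case must be treated by a direct, ad hoc argument exploiting the rigid near-extremal structure. Thus the overall proof splits as: a \emph{non-extremal case} handled by Regularity + Blow-up as above, and an \emph{extremal case} handled by elementary but careful path-rerouting arguments; combining the two, with the threshold between them chosen compatibly with the regularity constants, yields the conjecture for all $n \ge n_0$.
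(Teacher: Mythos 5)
Your plan coincides with the paper's strategy in every structural respect: the dichotomy into a non-extremal case (handled by Regularity + Blow-up along a Hamilton cycle of the reduced graph, with arc lengths tuned by choosing how many vertices of each super-regular pair go to each arc) and two extremal cases (close to $K_{n/2,n/2}$ and close to two cliques joined by few edges), treated by direct path constructions.

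The one concrete point where your proposed mechanism would fail is the ``$\pm 1$ tuning lemma'' inside a single super-regular pair. A super-regular pair $(X,Y)$ is bipartite, so any path between two fixed vertices has fixed parity, and a spanning path of a balanced pair has fixed length; you cannot alter the length of a spanning path of $(X,Y)$ by exactly $1$ while it remains spanning. What the Blow-up Lemma actually gives (Lemma \ref{blow2} in the paper) is the freedom to split $2|X|$ vertices between two disjoint paths in any \emph{even} proportion, so all cluster-level length adjustments happen in steps of $2$. The parity of $dist_C(x,y)$ is therefore decided globally by whether $x$ and $y$ attach to the same or to opposite sides of the bipartition $(\bigcup X_i)\cup(\bigcup Y_i)$, and when this parity disagrees with that of $\lfloor n/2\rfloor$ the paper repairs it not inside a pair but at a connector: it replaces a connecting edge $y_2^1x_3^1$ between consecutive clusters by a path $y_2^1u_1x_3^1$ through a common neighbour $u_1$ (which exists because $\delta(G)\geq \frac n2+1$), or else re-selects the clusters to which $x$ and $y$ are joined. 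If you carry out your plan, this connector-level parity switch (or an equivalent odd-length detour outside the bipartite structure) is the ingredient you must add in place of the in-pair $\pm1$ adjustment; with it, the rest of your outline matches the paper's proof.
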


The degree condition of Enomoto's conjecture is sharp. First, we consider the complete bipartite graph $K_{\frac{n}{2},\frac{n}{2}}$. For any Hamiltonian cycle of $K_{\frac{n}{2},\frac{n}{2}}$, any pair of vertices in the same part will be at an even distance on this cycle and any pair of vertices in different parts will be at an odd distance on this cycle. Since $\delta(K_{\frac{n}{2},\frac{n}{2}})=\frac{n}{2}$, the minimum degree $\delta(G)\geq \frac{n}{2}$ is not sufficient to imply the existence of a Hamiltonian cycle with a fixed pair of vertices at distance $\lfloor \frac{n}{2}\rfloor$. Second, we consider the graph $(K_{\frac{n-3}{2}}\cup K_{\frac{n-3}{2}})+K_3$. If $x,y$ are both in one of the copies of $K_{\frac{n-3}{2}}$, then we cannot find a Hamiltonian cycle $C$ of $(K_{\frac{n-3}{2}}\cup K_{\frac{n-3}{2}})+K_3$ such that $dist_C(x,y)=\lfloor \frac{n}{2}\rfloor$. Since $\delta((K_{\frac{n-3}{2}}\cup K_{\frac{n-3}{2}})+K_3)=\frac{n+1}{2}$, the minimum degree $\delta(G)\geq \frac{n+1}{2}$ is not sufficient to imply the existence of the desired Hamiltonian cycle.

Motivated by Enomoto's conjecture, Faudree, Lehel, Yoshimoto \cite{faudree} and Faudree, Li \cite{li} deal with locating a pair of vertices at precise distances on a Hamiltonian cycle.

\begin{theorem}\cite{faudree}
Let $k\geq 2$ be a fixed positive integer. If $G$ is a graph of order $n\geq 6k$ and $\delta (G)\geq \frac{n}{2}+1$, then for any pair of vertices $x$, $y$ in $G$, there is a Hamiltonian cycle $C$ of $G$ such that $dist_C(x,y)=k$.
\end{theorem}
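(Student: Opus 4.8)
The plan is to realise the Hamiltonian cycle as the union of two internally disjoint $x$--$y$ paths: a ``short'' path $P$ of length exactly $k$ and a ``long'' path $Q$, through all remaining vertices, of length $n-k$. Because $n\ge 6k$ we have $n-k>k$, so for \emph{any} such pair the cycle $C=P\cup Q$ automatically satisfies $dist_C(x,y)=k$; there is no need to argue separately about which arc is the shorter one. Writing $S$ for the set of $k-1$ internal vertices of $P$, the problem thus reduces to choosing $S\subseteq V(G)\setminus\{x,y\}$ with $|S|=k-1$ so that (i) $G[\{x\}\cup S\cup\{y\}]$ has a Hamiltonian $x$--$y$ path (which becomes $P$) and (ii) $G-S$ has a Hamiltonian $x$--$y$ path (which becomes $Q$).

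The main tool is Hamiltonian-connectedness. Since $\delta(G)\ge \frac{n}{2}+1\ge \frac{n+1}{2}$, Ore's theorem on Hamiltonian-connected graphs applies, so $G$ itself has a Hamiltonian path between every two vertices; in particular (ii) would be immediate if $G-S$ still met an Ore-type bound. The obstruction is that deleting the $k-1$ vertices of $S$ can drop the minimum degree of $G-S$ to about $\frac{n}{2}-k$, while $|V(G-S)|=n-k+1$: for $k\ge 3$ this already falls below the Hamiltonian-connectedness threshold, and for $k$ of order $n$ it falls below $\frac{|V(G-S)|}{2}$, so $G-S$ need not even be Hamiltonian. Hence $S$ must be chosen with care. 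I would build $P$ greedily with $G-S$ in mind: starting at $x$, repeatedly extend the current path by a neighbour of its end chosen to have many neighbours among the still-unused vertices, so that no vertex survives into $G-S$ having lost too many neighbours; a counting argument using $\delta(G)\ge \frac{n}{2}+1$ shows that at each of the $\le k-1$ steps sufficiently many such choices remain, and a terminal exchange (rerouting the last one or two internal vertices) forces the last internal vertex of $P$ to be adjacent to $y$. If this produces $S$ with $\delta(G-S)\ge \frac{|V(G-S)|+1}{2}$, then Ore's theorem applied to $G-S$ gives (ii), while (i) holds because $P$ is such a path.

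The crux is the claim that a ``good'' $S$ always exists, and it is here that the hypothesis $\delta(G)\ge \frac{n}{2}+1$ has to be used in full: the ``$+1$'' is indispensable, since both sharpness examples --- $K_{\frac{n}{2},\frac{n}{2}}$ (a parity obstruction) and $(K_{\frac{n-3}{2}}\cup K_{\frac{n-3}{2}})+K_3$ (a connectivity obstruction) --- miss this bound by exactly $1$. I expect the proof to split into a generic case, handled by the greedy-plus-exchange scheme above, and a bounded number of near-extremal cases in which $G$ is structurally close to one of these two graphs --- close to complete bipartite, or close to two near-balanced cliques glued along a small cut. In the near-extremal cases one instead exhibits $P$ and $Q$ directly, using the ``$+1$'' to supply the single crossing edge, respectively the single vertex bridging the two cliques, that the rest of the construction needs. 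A possible alternative route to (ii), avoiding the degree bookkeeping, is to start from a Hamiltonian $x$--$y$ path $v_0v_1\cdots v_{n-1}$ of $G$ and apply a short sequence of P\'{o}sa-type rotations near its ends to create the adjacencies $v_0v_k\in E(G)$ and $v_{n-1}v_{k-1}\in E(G)$, after which $v_0v_1\cdots v_{k-1}v_{n-1}v_{n-2}\cdots v_kv_0$ is a Hamiltonian cycle on which $x$ and $y$ lie at distance $k$; controlling the distance through the rotations would be the delicate point there.
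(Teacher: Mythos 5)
This statement is Theorem 4 of the paper, quoted from the cited reference [Faudree--Lehel--Yoshimoto]; the paper contains no proof of it, so there is no in-paper argument to compare yours against. Judged on its own terms, your skeleton is sound as far as it goes: decomposing $C$ into internally disjoint $x$--$y$ paths of lengths $k$ and $n-k$, observing that $n\ge 6k$ makes the length-$k$ arc the shorter one, and reducing (ii) to Hamiltonian-connectedness of $G-S$ via the Ore/Erd\H{o}s--Gallai bound $\delta\ge\frac{|V|+1}{2}$. For $k=2$ this closes completely: $S$ is a single common neighbour of $x$ and $y$, and $\delta(G-S)\ge\frac{n}{2}=\frac{(n-1)+1}{2}$.

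For $k\ge 3$, however, the step you yourself call the crux is missing, and it is not a routine counting argument. You need every vertex $v\notin S$ of minimum degree to satisfy $|S\cap N(v)|\le \frac{k}{2}$ in order to keep $\delta(G-S)\ge\frac{n-k+2}{2}$. But $\sum_{v\notin S}|S\cap N(v)|=\sum_{s\in S}|N(s)\setminus S|\ge (k-1)(\frac{n}{2}+2-k)$, so the average of $|S\cap N(v)|$ over the $n-k+1$ surviving vertices is already about $\frac{k-1}{2}$ --- essentially equal to the bound you must enforce pointwise. There is no slack for a greedy choice to exploit, and nothing prevents a positive fraction of minimum-degree vertices from exceeding the threshold for \emph{every} admissible path $P$ (the internal vertices of $P$ are further constrained to form a path through $N(x)$ and $N(y)$, which can force $S$ into dense neighbourhoods). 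So the claim ``a good $S$ always exists'' is precisely the theorem's difficulty, not a lemma you may assume. The remaining ingredients --- the terminal exchange making the last internal vertex adjacent to $y$, the classification and treatment of the near-extremal configurations, and the P\'osa-rotation alternative with its admitted ``delicate point'' of controlling the distance --- are likewise only gestured at. As it stands this is a plausible plan of attack rather than a proof.
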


\begin{theorem}\cite{li}
If $k$ is a positive integer with $2\leq k\leq \frac{n}{2}$ and $G$ is a graph of order $n$ with  $\delta (G)\geq \frac{n+k}{2}$, then for any pair of vertices $x$ and $y$ in $G$, there is a Hamiltonian cycle $C$ of $G$ such that $dist_C(x,y)=p$ for any $2\leq p\leq k$.
\end{theorem}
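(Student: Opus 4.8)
The plan is to realize the required Hamiltonian cycle as the union of two internally disjoint paths between $x$ and $y$: a short path $P$ of length exactly $p$ and a long path $Q$ of length $n-p$ covering all of the remaining vertices. Fix $p$ with $2\le p\le k$ (and assume $x\ne y$). Since $p\le k\le \frac n2\le n-p$, gluing $P$ and $Q$ along their common endpoints yields a Hamiltonian cycle $C$ of $G$ with $dist_C(x,y)=\min\{p,\,n-p\}=p$. So it suffices to construct such $P$ and $Q$.

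To build $P=xv_1v_2\cdots v_{p-1}y$, I would choose the internal vertices greedily. Suppose a path $xv_1\cdots v_{i-1}$ with $i-1\le p-3$ has been selected; its current endpoint (taken to be $x$ when $i=1$) has at least $\delta(G)\ge \frac n2+1$ neighbours, while the set $\{x,y,v_1,\dots,v_{i-1}\}$ to be avoided has at most $p-1\le \frac n2-1$ vertices, so a new vertex $v_i$ can be appended. For the last internal vertex I would use the fact that any two vertices of $G$ have at least $2\delta(G)-n\ge k\ge p$ common neighbours: applied to $v_{p-2}$ and $y$ (with the convention $v_0:=x$ when $p=2$), and after discarding the $p-1$ vertices of $\{x,v_1,\dots,v_{p-2}\}$, this still leaves a vertex $v_{p-1}\in N(v_{p-2})\cap N(y)$, automatically distinct from $y$ and from $v_{p-2}$, with which $P$ is completed.

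Next I would pass to $G'=G-\{v_1,\dots,v_{p-1}\}$, a graph on $m:=n-p+1$ vertices; note $m\ge \frac n2+1\ge 3$, because $2\le k\le \frac n2$ forces $n\ge 4$. Each vertex of $G'$ loses at most $p-1$ neighbours, so
\[
\delta(G')\ \ge\ \delta(G)-(p-1)\ \ge\ \frac{n+k}{2}-(p-1)\ \ge\ \frac{n+p}{2}-(p-1)\ =\ \frac{n-p+2}{2}\ =\ \frac{m+1}{2}.
\]
Hence $\deg_{G'}(u)+\deg_{G'}(v)\ge m+1$ for every pair $u,v\in V(G')$, so the classical Ore-type condition guarantees that $G'$ is Hamiltonian-connected; in particular $G'$ contains a Hamiltonian path $Q$ from $x$ to $y$, and this $Q$ has length $m-1=n-p$. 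Since $P$ and $Q$ meet only in $\{x,y\}$ and $V(P)\cup V(Q)=V(G)$, the cycle $C=P\cup Q$ is a Hamiltonian cycle of $G$, and by the first paragraph $dist_C(x,y)=p$, as desired.

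The whole argument is essentially careful degree bookkeeping, so I do not anticipate a genuine obstacle; the one point that has to be handled exactly — and the reason the hypothesis is $\delta(G)\ge\frac{n+k}{2}$ and not weaker — is that this bound is spent \emph{twice with no slack}: once to keep $2\delta(G)-n\ge k\ge p$ common neighbours available while laying down the short path $P$, and once more to guarantee that deleting the $p-1$ internal vertices still leaves $\delta(G')\ge\frac{m+1}{2}$, precisely the threshold needed to make $G'$ Hamiltonian-connected. What then remains is only to verify these inequalities together with the degenerate instances $p=2$ (where $P=xv_1y$ with $v_1\in N(x)\cap N(y)$) and $p=\frac n2$ (where $n-p=\frac n2=p$).
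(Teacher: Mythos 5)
Your argument is correct, but note that the paper you are comparing against does not actually prove this statement: it is quoted there as a known result of Faudree and Li (reference \cite{li} in the text), so there is no in-paper proof to match your approach against. On its own terms your proposal is sound. The decomposition into a short $x$--$y$ path $P$ of length $p$ plus a spanning $x$--$y$ path $Q$ of the remainder is the natural one, and every estimate checks out: the greedy extension works because the forbidden set has at most $p-1\le \frac n2-1$ vertices against $\delta(G)\ge\frac n2+1$ neighbours; the closing vertex exists because any two vertices have at least $2\delta(G)-n\ge k\ge p$ common neighbours, of which at most $p-1$ are already used; and after deleting the $p-1$ internal vertices you get $\delta(G')\ge\frac{m+1}{2}$, which by Ore's theorem on Hamiltonian-connected graphs (degree sum at least $m+1$ for all pairs) yields the Hamiltonian $x$--$y$ path $Q$ in $G'$. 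Since $p\le\frac n2\le n-p$, the glued cycle places $x$ and $y$ at distance exactly $p$. The only dependence on outside machinery is Ore's 1963 Hamiltonian-connectedness criterion, which is standard and legitimately invoked; the edge cases $p=2$ and $p=\frac n2$ are handled. This is a clean, elementary proof of the cited theorem.
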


Moreover, Faudree and Li \cite{li} proposed a more general conjecture.

\begin{conjecture}\cite{li}\label{li}
If $G$ is a graph of order $n\geq 3$ and $\delta(G)\geq \frac{n}{2}+1$, then for any pair of vertices $x$, $y$ in $G$ and any integer $2\leq k\leq \frac{n}{2}$, there is a Hamiltonian cycle $C$ of $G$ such that $dist_C(x,y)=k$.
\end{conjecture}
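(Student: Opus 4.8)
\medskip
\noindent\textbf{Proof proposal.} The plan is to prove Conjecture~\ref{li} for all sufficiently large $n$; the theorem of Faudree, Lehel and Yoshimoto \cite{faudree} already covers every $k$ with $k\le n/6$ (in particular every fixed $k$), so after fixing constants $\varepsilon\ll\eta\ll1$ it suffices to treat the linear window $\varepsilon n\le k\le\lfloor n/2\rfloor$ for $n$ large. The first step is to reformulate the target: a Hamiltonian cycle $C$ with $dist_C(x,y)=k$ is precisely a pair of internally disjoint $x$--$y$ paths $Q_1,Q_2$ with $k$ and $n-k$ edges whose union spans $V(G)$. Since $\varepsilon n\le k\le n-k$, both arcs span $\Theta(n)$ vertices, so we are looking for a spanning ``two--route'' structure with prescribed route lengths. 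As in the treatment of Enomoto's conjecture itself, I would separate a \emph{non-extremal} case, in which $G$ is $\eta$-far in edit distance from both $K_{n/2,n/2}$ and $(K_{(n-3)/2}\cup K_{(n-3)/2})+K_3$, from an \emph{extremal} case, in which $G$ is $\eta$-close to one of them.

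In the non-extremal case I would apply the Regularity Lemma of Szemer\'edi to obtain a cluster partition $V_0\cup V_1\cup\dots\cup V_t$ and a reduced graph $R$ with $\delta(R)\ge(\tfrac12-o(1))t$; because $G$, and hence $R$, is far from both extremal configurations, a stability version of Dirac's theorem applies to $R$ and yields a Hamiltonian cycle of $R$. I would cut this cycle into two arcs, regularise and balance the pairs along them into super-regular pairs, and route $Q_1$ through the first arc and $Q_2$ through the second, attaching $x$ to the two ends of the first arc and $y$ to the two ends of the second through their linear-size neighbourhoods. The crucial bookkeeping is to decide, vertex by vertex, how the clusters are distributed between the two arcs---cutting the two ``seam'' clusters into prescribed pieces---and how the $|V_0|+o(n)$ exceptional vertices are re-absorbed, so that the first arc carries exactly $k-1$ vertices in addition to $x$. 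Then the Blow-up Lemma of Koml\'os et al.\ applied inside each arc produces a spanning path between its two designated end-clusters, and concatenating the two paths through $x$ and $y$ gives a Hamiltonian cycle $C$ with $dist_C(x,y)=k$ exactly.

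The extremal case is where the argument must genuinely depend on $k$, and it is the main obstacle. When $G$ is $\eta$-close to $K_{A,B}$ with $|A|,|B|\approx n/2$, almost every long path alternates between $A$ and $B$, so the parity of an $x$--$y$ arc is determined by the sides of $x$ and $y$ together with the number of ``impurity'' edges (edges inside $A$ or inside $B$) it uses; to realise a prescribed $k$ one must route $Q_1$ and $Q_2$ so that jointly they use impurity edges of the correct parity in the correct positions relative to $x$ and $y$, and the hypothesis $\delta\ge n/2+1$ is precisely what guarantees that such edges exist, are well-distributed, and can be inserted---even when $x$ or $y$ lies on the ``wrong'' side or meets few crossing edges---without unbalancing the two side sizes or destroying the spanning property. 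When $G$ is $\eta$-close to $(K_{(n-3)/2}\cup K_{(n-3)/2})+K_3$, the obstruction is instead the bottleneck at the three joined vertices: any path between the two cliques must pass through one of them, so a Hamiltonian cycle restricted to the two cliques falls into at most three blocks, each inside a single clique; if $x$ and $y$ lie in the same clique, the short $x$--$y$ arc of length $k$ can be confined to that clique (and then works directly) only while $k\lesssim n/2$, and for larger $k$ it must make a controlled excursion through the $K_3$-bottleneck, where the slack $\delta\ge n/2+1$---one more than the degree in the pure extremal graph, where the construction provably fails---is exactly what makes room for the excursion. Assembling all of this into a single argument that produces, uniformly over every $k\in[\varepsilon n,\lfloor n/2\rfloor]$, a parametrised family of cycles covering both extremal types---rather than a one-off template for $k=\lfloor n/2\rfloor$---while spending the degree slack precisely where the extremal graphs break, is the step I expect to be the hardest.
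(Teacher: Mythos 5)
You should first note that the statement you are proving is not a theorem of this paper at all: Conjecture~\ref{li} is quoted from Faudree and Li \cite{li} and is left open here. The paper proves only the single case $k=\lfloor\frac{n}{2}\rfloor$ (Enomoto's conjecture, Theorem~\ref{main}) for large $n$, so there is no proof in the paper to compare yours against, and your proposal would, if completed, go strictly beyond what the authors establish.

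As it stands, however, the proposal is a programme rather than a proof, and the gap is exactly where you say you expect it: the extremal cases. Your reduction to $\varepsilon n\le k\le\lfloor n/2\rfloor$ via \cite{faudree} is fine, and your non-extremal outline is essentially the paper's own argument for Theorem~\ref{main} (Hamiltonian cycle in the reduced graph, super-regularising the pairs, absorbing $V_0$, then using Lemma~\ref{blow2} to tune the path lengths so the sum along one arc equals $k-2$ instead of $\frac{n}{2}-2$; the paper's Claim~\ref{proper} gives an achievable range of roughly $[4(t-1),\tfrac34 n]$, which covers every $k\ge\varepsilon n$, and the parity adjustments of subsection~4.5 carry over). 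But for the extremal cases you only name the obstructions (parity of impurity edges in the near-bipartite case, the crossing bottleneck in the near-disconnected case) without producing the parametrised constructions, and it is precisely these constructions that occupy Section~5 of the paper even for the single value $k=\frac{n}{2}$: explicit templates $P_3,P_4$ threaded through the exceptional set $W$, with separate sub-cases according to the sides of $x,y$, the parity of $\frac{n}{2}$, and degenerate adjacency situations (e.g.\ $y$ being the only neighbour of $x$ inside its part). A uniform-in-$k$ version of these templates is not routine bookkeeping; for instance, in the second extremal case with $x,y$ in the same part the short arc fits inside that part when $k<\frac{n}{2}$ but not when $k=\frac{n}{2}$, so the number of crossing edges the two arcs must use changes with $k$, and your remark about ``$k$ larger than $\approx n/2$'' is vacuous since $k\le\frac{n}{2}$ by hypothesis --- a sign that this case has not actually been worked out. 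Until those constructions are supplied (and checked against the degree slack $\delta\ge\frac{n}{2}+1$ in each sub-case), the proposal does not prove Conjecture~\ref{li}.
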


In this paper, we will prove Conjecture \ref{enomoto} for graphs of sufficiently large order. Our main result is the following.

\begin{theorem}\label{main}
There exists a positive integer $n_0$ such that for all $n\geq n_0$, if $G$ is a graph of order $n$ with $\delta(G)\geq \frac{n}{2}+1$, then for any pair of vertices $x$, $y$ in $G$, there is a Hamiltonian cycle $C$ of $G$ such that $dist_C(x,y)=\lfloor \frac{n}{2}\rfloor$.
\end{theorem}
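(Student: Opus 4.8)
The plan is to reformulate the statement as a balanced-partition problem and then split into an extremal and a non-extremal case, attacking the latter with the Regularity Lemma of Szemer\'edi and the Blow-up Lemma of Koml\'os et al. Observe first that a Hamiltonian cycle $C$ with $dist_C(x,y)=\lfloor n/2\rfloor$ is exactly a pair of internally disjoint $x$--$y$ paths whose union spans $G$, one with $\lfloor n/2\rfloor$ edges and the other with $\lceil n/2\rceil$ edges. Hence the theorem is equivalent to the following: there is a partition $V(G)\setminus\{x,y\}=A\cup B$ with $|A|=\lfloor n/2\rfloor-1$ and $|B|=\lceil n/2\rceil-1$ such that each of $G[A\cup\{x,y\}]$ and $G[B\cup\{x,y\}]$ has a Hamiltonian path between $x$ and $y$. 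So it suffices to produce such a balanced partition together with the two spanning paths.

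Fix a small constant $\beta>0$ and call $G$ \emph{extremal} if $V(G)$ admits a partition $V_1\cup V_2$ with $\bigl||V_i|-n/2\bigr|\le\beta n$ for which either (a) $e(V_1,V_2)\ge(1-\beta)n^2/4$, or (b) $e(V_1)+e(V_2)\ge(1-\beta)n^2/4$; these two cases mirror the sharpness examples $K_{n/2,n/2}$ and $(K_{(n-3)/2}\cup K_{(n-3)/2})+K_3$ respectively. In the extremal case I would argue directly from the near-extremal structure: the surplus in $\delta(G)\ge n/2+1$ forces enough edges in the ``wrong'' places --- a near-perfect matching between $V_1$ and $V_2$ in case (b), or many edges inside a single side in case (a) --- to reroute a controlled number of vertices from one side to the other, after which an elementary counting argument delivers the balanced partition and assembles the two $x$--$y$ Hamiltonian paths. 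This is exactly the regime in which the two examples show that the ``$+1$'' cannot be dropped, so the argument must (and will) make use of it.

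In the non-extremal case apply the degree form of the Regularity Lemma to $G$ with constants $0<\epsilon\ll d\ll\beta$, obtaining clusters $V_1,\dots,V_k$ of a common size $L$, an exceptional set $V_0$ with $|V_0|\le\epsilon n$, and the reduced graph $R$ on $[k]$ whose edges are the $\epsilon$-regular pairs of density at least $d$; then $\delta(R)\ge(1/2-\epsilon-d)k$. Since $G$ is non-extremal one checks that $R$ is non-extremal too, so the stability form of Dirac's theorem yields a Hamiltonian cycle of $R$, which after relabelling we write as $Z_1Z_2\cdots Z_kZ_1$. The usual cleaning (moving $o(L)$ vertices from each cluster into $V_0$) makes each consecutive pair $(Z_i,Z_{i+1})$ super-regular. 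Now cut this cyclic sequence into two arcs whose clusters have total size as close as possible to $n/2$, so that the intended cycle reads: $x$, then one arc of clusters, then $y$, then the other arc, back to $x$; pick the two cut points near clusters in which $x$, respectively $y$, has a positive density of neighbours (possible because $\deg(x),\deg(y)\ge n/2+1$). Reinsert the vertices of $V_0$ by absorbing each into a cluster in which it has at least $dL$ neighbours, and then move a bounded number of vertices across the two junctions so that the two arcs contain exactly $\lfloor n/2\rfloor-1$ and $\lceil n/2\rceil-1$ vertices apart from $x$ and $y$. Finally, apply the Blow-up Lemma inside each arc to route an $x$--$y$ Hamiltonian path through its chain of super-regular pairs; gluing these two paths at $x$ and $y$ gives the desired Hamiltonian cycle $C$.

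Two points carry the difficulty. First, $R$ has minimum degree only about $k/2$, i.e. sitting exactly on the Dirac threshold, so one cannot quote Hamiltonicity of $R$ as a black box; the structure one needs in $R$ (a Hamiltonian cycle, or at the very least a spanning connected matching) has to be extracted through a stability argument whose only failure case is ``$G$ extremal'', so the case split and this extraction step must be designed in tandem. Second, and specific to this problem, regularity together with the Blow-up Lemma on their own only ever produce \emph{some} Hamiltonian cycle, whereas here one arc's length must hit the exact value $\lfloor n/2\rfloor$; the bookkeeping of the $O(\epsilon n)$ exceptional vertices, of the precise places where the Hamiltonian path crosses each super-regular pair, and of which junction vertices are assigned to which arc must be organised so that the final counts come out exactly right rather than merely correct up to $o(n)$. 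Marrying this exact balancing to the hands-on extremal analysis at the sharp degree bound $n/2+1$ is the core of the proof.
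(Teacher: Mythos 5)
Your overall architecture (extremal versus non-extremal dichotomy, Regularity Lemma plus Blow-up Lemma in the non-extremal case, hands-on constructions in the extremal cases) matches the paper's. But the mechanism you propose for hitting the exact distance $\lfloor n/2\rfloor$ does not work as stated, and it is precisely the step you yourself identify as the core difficulty. You cut the Hamiltonian cycle of the reduced graph $R$ into two arcs and route one $x$--$y$ path through each arc; the number of vertices an arc must then absorb is essentially the sum of the sizes of its clusters, i.e.\ a multiple of $L\approx n/l$ with $l\le M_0$ bounded, so the achievable arc lengths have granularity $\Theta(n)$. ``Moving a bounded number of vertices across the two junctions'' cannot bridge a gap of order $L=\Theta(n)$, and moving $\Theta(L)$ vertices wholesale into another cluster would destroy the regularity structure. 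The paper resolves exactly this by traversing the cluster cycle \emph{twice}: it pairs the clusters into super-regular pairs $(X_i,Y_i)$, connects consecutive pairs by two parallel systems of short paths ($P_i$'s and $Q_i$'s), and places two vertex-disjoint paths of prescribed lengths $l_i^1+l_i^2=2|X_i^*|$ inside each pair (Lemma \ref{blow2}, a two-path version of the Blow-up Lemma); since each $l_i^1$ ranges over all even integers in $[4,2|X_i^*|-4]$, the sum $\sum_i l_i^1$ sweeps every even value in an interval containing $n/2$. To salvage your two-arc picture you would need the analogous device of letting the cut pass through the \emph{interior} of one super-regular pair, splitting it into two prescribed-length paths --- which is the same lemma.

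The second gap is parity. Inside a super-regular pair every path alternates sides, so the cycle you build lives in an essentially bipartite host $(\bigcup X_i)\cup(\bigcup Y_i)$ and the parity of $dist_C(x,y)$ is forced by which sides $x$ and $y$ attach to; when this forced parity disagrees with that of $\lfloor n/2\rfloor$, no choice of lengths can help. The paper spends a whole subsection (non-extremal case 2) repairing this by inserting an odd connecting path through a common neighbour of $y_2^1$ and $x_3^1$, or by re-choosing the clusters to which $x$ and $y$ are joined. The same obstruction reappears in your dense extremal case (a), where the repair requires an edge inside a part --- exactly what the ``$+1$'' in $\delta(G)\ge\frac n2+1$ guarantees and what $K_{n/2,n/2}$ lacks. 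Your proposal never confronts parity, so as written both halves of the argument have a hole at the very point where this problem is harder than merely producing some Hamiltonian cycle.
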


\section{The main tools}

In this section we introduce some definitions about the regular pairs and some results related to these definitions.

Let $G$ be a graph, for any two disjoint vertex sets $X$ and $Y$ of $G$, the {\em density} of the pair $(X,Y)$ is the ratio $d(X,Y):=\frac{e(X,Y)}{|X||Y|}$, here $e(X,Y)$ is defined to be the number of edges with one end vertex in $X$ and the other in $Y$. Let $\epsilon >0$, we say the pair $(X,Y)$ is {\em $\epsilon $-regular} if for every $A\subseteq X$ and $B\subseteq Y$ such that $|A|>\epsilon |X|$ and $|B|>\epsilon |Y|$ we have $|d(A,B)-d(X,Y)|<\epsilon $. Moreover, let $\delta >0$, the pair $(X,Y)$ is called {\em ($\epsilon ,\delta$)-super-regular} if it is $\epsilon $-regular, $deg_Y(x)>\delta |Y|$ for all $x\in X$ and $deg_X(y)>\delta |X|$ for all $y\in Y$.

We will use some well-known properties of regular pairs.

\begin{lemma}\label{large}\cite{regularity}
Let $(A,B)$ be an $\epsilon $-regular pair of density $d$ and $Y\subseteq B$ such that $|Y|>\epsilon |B|$. Then all but at most $\epsilon |A|$ vertices in $A$ have more than $(d-\epsilon )|Y|$ neighbors in $Y$.
\end{lemma}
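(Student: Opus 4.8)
The plan is to argue by contradiction, applying the definition of $\epsilon$-regularity directly to the pair consisting of $Y$ and the set of ``bad'' vertices of $A$. I would set
\[ A' := \{x \in A : deg_Y(x) \leq (d-\epsilon)|Y|\}, \]
the collection of vertices of $A$ that fail the asserted conclusion, and suppose toward a contradiction that $|A'| > \epsilon |A|$.

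Since $|A'| > \epsilon|A|$ by this assumption and $|Y| > \epsilon|B|$ by hypothesis, the pair $(A', Y)$ meets the size requirements in the definition of $\epsilon$-regularity of $(A,B)$ (note $A' \subseteq A$ and $Y \subseteq B$). Hence $|d(A',Y) - d| < \epsilon$, and in particular $d(A',Y) > d - \epsilon$. On the other hand, I would count the edges between $A'$ and $Y$ through the degrees of the vertices of $A'$: using $deg_Y(x) \leq (d-\epsilon)|Y|$ for every $x \in A'$,
\[ e(A',Y) = \sum_{x \in A'} deg_Y(x) \leq |A'|\,(d-\epsilon)|Y|, \]
so that $d(A',Y) = e(A',Y)/(|A'||Y|) \leq d - \epsilon$. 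This contradicts the lower bound just obtained, and therefore $|A'| \leq \epsilon|A|$, which is precisely the claim.

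This argument presents no genuine obstacle: it is a single application of the definition of $\epsilon$-regularity combined with an elementary edge count via degrees. The only point deserving (minor) attention is checking that both $A'$ and $Y$ are large enough to be fed into the regularity condition — this is exactly where the standing hypothesis $|Y| > \epsilon|B|$ and the contradiction hypothesis $|A'| > \epsilon|A|$ are used, and it is worth observing that no structural property of $A'$ beyond its cardinality is needed there.
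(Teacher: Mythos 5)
Your proof is correct: it is the standard argument, letting the set of bad vertices together with $Y$ violate the $\epsilon$-regularity of $(A,B)$ via a degree-sum edge count, and the strict/non-strict inequalities are handled properly. The paper cites this lemma from the literature without proof, and your argument is exactly the canonical one it relies on.
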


The following one says that subgraphs of regular pairs with reasonable size are also regular.

\begin{lemma}[Slicing Lemma]\label{slicing}\cite{regularity}
Let $\alpha >\epsilon >0$ and $\epsilon ^{'}:=\max \{\frac{\epsilon }{\alpha },2\epsilon \}$. Let $(A,B)$ be an $\epsilon $-regular pair with density $d$. Suppose $A^{'}\subseteq A$ such that $|A^{'}|\geq \alpha |A|$, and $B^{'}\subseteq B$ such that $|B^{'}|\geq \alpha |B|$. Then $(A^{'},B^{'})$ is an $\epsilon^{'}$-regular pair with density $d^{'}$ such that $|d^{'}-d|<\epsilon $.
\end{lemma}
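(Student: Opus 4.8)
The plan is to verify the two conclusions separately, both by a direct appeal to the definition of $\epsilon$-regularity for the original pair $(A,B)$; no new idea is needed beyond bookkeeping of the two quantities defining $\epsilon'$.

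First I would establish the density estimate. Since $\alpha>\epsilon$, the hypotheses $|A'|\geq\alpha|A|$ and $|B'|\geq\alpha|B|$ give $|A'|>\epsilon|A|$ and $|B'|>\epsilon|B|$, so $A'$ and $B'$ are themselves admissible test sets in the definition of $\epsilon$-regularity of $(A,B)$. Hence $|d(A',B')-d(A,B)|<\epsilon$, i.e. $|d'-d|<\epsilon$, which is the second claim.

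Next I would prove that $(A',B')$ is $\epsilon'$-regular. Fix arbitrary $X\subseteq A'$ and $Y\subseteq B'$ with $|X|>\epsilon'|A'|$ and $|Y|>\epsilon'|B'|$. The point is that $\epsilon'\geq\epsilon/\alpha$, so
\[
|X|>\epsilon'|A'|\geq \tfrac{\epsilon}{\alpha}|A'|\geq \tfrac{\epsilon}{\alpha}\cdot\alpha|A|=\epsilon|A|,
\]
and symmetrically $|Y|>\epsilon|B|$. Thus $X$ and $Y$ are also admissible test sets for $\epsilon$-regularity of $(A,B)$, so $|d(X,Y)-d(A,B)|<\epsilon$. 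Combining with the density estimate from the first step and using the triangle inequality,
\[
|d(X,Y)-d(A',B')|\leq |d(X,Y)-d(A,B)|+|d(A,B)-d(A',B')|<2\epsilon\leq\epsilon',
\]
the last inequality because $\epsilon'\geq 2\epsilon$. Since $X,Y$ were arbitrary, $(A',B')$ is $\epsilon'$-regular, completing the proof.

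There is no real obstacle here; the only thing to be careful about is tracking which of the two defining bounds for $\epsilon'$ is used where — the bound $\epsilon/\alpha$ is what lets us pull test sets living inside $A'$ and $B'$ back to admissible test sets of $(A,B)$, while the bound $2\epsilon$ is what absorbs the accumulated error from two applications of $\epsilon$-regularity. I would also remark in passing that all the set-size inequalities are strict in exactly the way the definition demands, so no degenerate case (e.g. $|X|=\epsilon|A|$) arises.
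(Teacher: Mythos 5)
Your proof is correct: the density bound comes from testing $(A',B')$ directly against the $\epsilon$-regularity of $(A,B)$ (using $\alpha>\epsilon$), and the regularity of $(A',B')$ follows by pulling any test sets $X\subseteq A'$, $Y\subseteq B'$ back to admissible test sets of $(A,B)$ via $\epsilon'\geq \epsilon/\alpha$ and absorbing the two errors with $\epsilon'\geq 2\epsilon$. The paper itself gives no proof of this lemma (it is quoted from the Koml\'os--Simonovits survey), and your argument is exactly the standard one from that source, so there is nothing to add.
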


For a bipartite graph $G=X\cup Y$, let $\delta(X,Y):=\min\{deg_G(x,Y):\ for\ every\ x\in X\}$. We can say that a bipartite graph with very large minimum degree has a super-regular pair.

\begin{lemma}\cite{chen}\label{chen}
Given $0<\rho<1$, let $G=X\cup Y$ be a bipartite graph such that $\delta (X,Y)\geq (1-\rho)|Y|$ and $\delta (Y,X)\geq (1-\rho)|X|$. Then $(X,Y)$ is ($\sqrt{\rho},1-\rho $)-super-regular.
\end{lemma}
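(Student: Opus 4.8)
The plan is to check the two parts of the definition of $(\sqrt\rho,1-\rho)$-super-regularity separately: the one-sided degree conditions, which are essentially given, and the $\sqrt\rho$-regularity of the pair $(X,Y)$, which is a short density computation. For the degree conditions, note that by hypothesis every $x\in X$ has $deg_G(x,Y)\ge(1-\rho)|Y|$ and every $y\in Y$ has $deg_G(y,X)\ge(1-\rho)|X|$, which is exactly the requirement $deg_Y(x)>\delta|Y|$, $deg_X(y)>\delta|X|$ with $\delta=1-\rho$ (modulo the usual convention on strict versus non-strict inequalities, which one can absorb into an arbitrarily small slack in $\rho$ if one insists on strictness).

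For the regularity, I would first record the global density. Since each of the $|X|$ vertices of $X$ has at least $(1-\rho)|Y|$ neighbours in $Y$, we have $e(X,Y)\ge(1-\rho)|X||Y|$, so $1-\rho\le d(X,Y)\le 1$. Now take arbitrary $A\subseteq X$ and $B\subseteq Y$ with $|A|>\sqrt\rho|X|$ and $|B|>\sqrt\rho|Y|$. Each $x\in A\subseteq X$ has at most $\rho|Y|$ non-neighbours in $Y$, hence at most $\rho|Y|$ non-neighbours in $B$; summing over $x\in A$, the number of non-edges between $A$ and $B$ is at most $\rho|Y|\,|A|$, so $e(A,B)\ge|A||B|-\rho|Y||A|$ and
$$d(A,B)=\frac{e(A,B)}{|A||B|}\ \ge\ 1-\frac{\rho|Y|}{|B|}\ >\ 1-\frac{\rho|Y|}{\sqrt\rho\,|Y|}\ =\ 1-\sqrt\rho,$$
using $|B|>\sqrt\rho|Y|$ in the strict step; trivially $d(A,B)\le 1$ as well.

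Finally, since $0<\rho<1$ implies $\rho<\sqrt\rho$, both $d(X,Y)$ and $d(A,B)$ lie in the interval $(1-\sqrt\rho,\,1]$, whose length is $\sqrt\rho$, so $|d(A,B)-d(X,Y)|<\sqrt\rho$. As $A,B$ were arbitrary of the stated sizes, $(X,Y)$ is $\sqrt\rho$-regular, and together with the degree bounds it is $(\sqrt\rho,1-\rho)$-super-regular. I do not expect any genuine obstacle here; the only points needing a moment's care are deducing the density bound $|d(A,B)-d(X,Y)|<\sqrt\rho$ from the common containing interval (so as not to lose a factor of $2$ or similar), getting the strict inequality right from $|B|>\sqrt\rho|Y|$, and the harmless hypothesis-versus-definition mismatch in the degree inequalities mentioned above.
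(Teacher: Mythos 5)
Your proof is correct: the non-edge count gives $d(A,B)>1-\sqrt{\rho}$ for all admissible $A,B$, and since $d(X,Y)\ge 1-\rho>1-\sqrt{\rho}$ as well, both densities lie in $(1-\sqrt{\rho},1]$ and hence differ by less than $\sqrt{\rho}$, which together with the degree hypotheses gives $(\sqrt{\rho},1-\rho)$-super-regularity. The paper itself gives no proof (it quotes the lemma from Chen, Ota, Saito and Zhao), and your counting argument is exactly the standard one used there; your remark about the $\ge$ versus $>$ mismatch in the degree condition is a fair observation about the statement itself, not a gap in your argument.
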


Now we introduce Szemer\'{e}di's Regularity Lemma. We only state the degree form of the Regularity Lemma, which is more applicable (see \cite{regularity} for more details and applications of the Regularity Lemma).

\begin{lemma}[Regularity Lemma-Degree Form]\label{regular}
For every $\epsilon >0$ and every integer $m_0$ there is an $M_0=M_0(\epsilon, m_0)$ such that if $G=(V,E)$ is any graph on at least $M_0$ vertices and $d \in [0,1]$ is any real number, then there is a partition of the vertex set $V$ into $l+1$ clusters $V_0, V_1,..., V_l$, and there is a subgraph $G^{'}=(V,E^{'})$ with the following properties:\\
(1) $m_0\leq l\leq M_0$;\\
(2) $|V_0|\leq \epsilon |V|$, and $V_i$ $(1\leq i\leq l)$ are of the same size $L$;\\
(3) $deg_{G^{'}}(v)>deg_{G}(v)-(d +\epsilon)|V|$ for all $v\in V$;\\
(4) $G^{'}[V_i]= \emptyset $ (i.e. $V_i$ is an independant set in $G^{'}$) for all $i$;\\
(5) each pair $(V_i, V_j)$, $1\leq i<j\leq l$, is $\epsilon$-regular, each with a density $0$ or exceeding $d$.
\end{lemma}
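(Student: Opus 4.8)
The plan is to obtain Lemma~\ref{regular} as a short corollary of the classical (original) form of Szemer\'{e}di's Regularity Lemma: for every $\epsilon'>0$ and every integer $m'$ there is $M'=M'(\epsilon',m')$ such that any graph on at least $M'$ vertices has a partition $V=W_0\cup W_1\cup\dots\cup W_k$ with $m'\le k\le M'$, $|W_0|\le\epsilon'|V|$, $|W_1|=\dots=|W_k|=:L$, and at most $\epsilon' k^2$ of the pairs $(W_i,W_j)$, $1\le i<j\le k$, failing to be $\epsilon'$-regular. Given $\epsilon$, $m_0$, $d$, I would fix $\epsilon'$ very small compared with $\epsilon$ and $m_0':=\max\{\lceil 1/\epsilon'\rceil,2m_0\}$, put $M_0:=M'(\epsilon',m_0')$, and apply the classical form with $\epsilon',m_0'$.

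Starting from the partition $W_0,\dots,W_k$ so obtained, I would \emph{clean it up} by moving into the exceptional set (a) every vertex of every cluster $W_i$ that forms an irregular pair with more than $\sqrt{\epsilon'}k$ of the other clusters (there are at most $2\sqrt{\epsilon'}k$ such clusters, since the total number of irregular pairs is at most $\epsilon' k^2$); (b) inside each surviving cluster $W_i$, every vertex which, for more than $\sqrt{\epsilon'}k$ of the surviving clusters $W_j$ with $(W_i,W_j)$ $\epsilon'$-regular of density at most $d+\epsilon'$, has more than $(d+2\epsilon')L$ neighbours in $W_j$; and (c) a handful of further vertices so that the surviving clusters all again have one common size. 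A double count using $\epsilon'$-regularity of each pair arising in (b) (in the spirit of Lemma~\ref{large}) shows each cluster loses at most $\sqrt{\epsilon'}L$ vertices to (b), so altogether the exceptional set grows by only $O(\sqrt{\epsilon'}|V|)$; relabelling the result as $V_0\cup V_1\cup\dots\cup V_l$ one obtains $|V_0|\le\epsilon|V|$ and $l\ge m_0$, i.e. (1) and (2). Finally I would let $G'$ consist of the edges of $G$ that are either incident with $V_0$ or join two clusters $V_i\subseteq W_i$, $V_j\subseteq W_j$ (with $i,j\ge1$) for which $(W_i,W_j)$ is $\epsilon'$-regular of density greater than $d+\epsilon'$.

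Properties (4) and (5) are then essentially immediate: no edge inside a cluster survives; for a surviving pair $(V_i,V_j)$ we have $|V_i|>\epsilon'|W_i|$ and $|V_j|>\epsilon'|W_j|$, so by $\epsilon'$-regularity its density stays within $\epsilon'$ of $d(W_i,W_j)>d+\epsilon'$, hence exceeds $d$, and by the Slicing Lemma (Lemma~\ref{slicing}) the pair is $\epsilon$-regular in $G'$, while every discarded pair has density $0$. The one real computation is (3). A vertex of $V_0$ loses only edges inside $V_0$, at most $|V_0|\le\epsilon|V|$. A surviving vertex $v\in V_i$ ($i\ge1$) loses at most its neighbours inside its own cluster (fewer than $L$) together with its neighbours in the discarded partner clusters of $W_i$; the latter split into the irregular partners (at most $\sqrt{\epsilon'}k$ of them because $W_i$ survived (a), contributing at most $\sqrt{\epsilon'}|V|$), the low-density partners for which $v$ is atypical (at most $\sqrt{\epsilon'}k$ of them because $v$ survived (b), contributing at most $\sqrt{\epsilon'}|V|$), and the low-density partners for which $v$ is typical (at most $k$, each supplying fewer than $(d+2\epsilon')L$ neighbours, hence fewer than $(d+2\epsilon')|V|$ in total). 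Summing, $v$ loses fewer than $(d+O(\epsilon')+O(\sqrt{\epsilon'}))|V|<(d+\epsilon)|V|$ edges for $\epsilon'$ small, which is (3).

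Modulo the classical Regularity Lemma, the delicate point is step (b): the naive ``delete every bad pair'' construction violates (3), since a single vertex may spread most of its neighbourhood thinly over many low-density partner clusters --- the centre of a star is the extreme case --- and would then lose almost all of its edges, so these atypical vertices must be absorbed into $V_0$, and one must verify that only $O(\sqrt{\epsilon'}|V|)$ of them arise so that $|V_0|\le\epsilon|V|$ is preserved. If one is unwilling to quote the classical form, the genuine work is its proof: one attaches to a partition its mean-square density $q=\sum_{i,j}(|W_i||W_j|/|V|^2)d(W_i,W_j)^2\in[0,1]$, proves via a defect form of the Cauchy--Schwarz inequality that an $\epsilon'$-irregular partition refines into boundedly many more parts with $q$ increased by at least a fixed power of $\epsilon'$, and observes that this can recur only boundedly often, which is what forces the tower-type bound on $M'$.
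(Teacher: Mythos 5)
The paper does not prove this lemma at all: it is quoted as the standard ``degree form'' of Szemer\'edi's Regularity Lemma, with the reader referred to the Koml\'os--Simonovits survey, so there is no in-paper argument to compare against. Your derivation from the classical form is the standard reduction found in that survey and elsewhere, and as written it is essentially sound: the counts in your clean-up are right (at most $2\sqrt{\epsilon'}k$ clusters have more than $\sqrt{\epsilon'}k$ irregular partners since there are at most $\epsilon' k^2$ irregular pairs; for each regular pair of density at most $d+\epsilon'$ only at most $\epsilon' L$ vertices of a cluster exceed the $(d+2\epsilon')L$ threshold, so the incidence count gives at most $\sqrt{\epsilon'}L$ deletions per cluster in step (b)); the Slicing Lemma then yields (5), and your three-way split of a surviving vertex's lost neighbours (irregular partners, partners where it is atypical, partners where it is typical) gives (3) with room to spare once $\epsilon'\ll\epsilon$. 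You also correctly identify the one genuinely delicate point, namely that low-density pairs cannot simply be deleted wholesale without absorbing the atypical vertices into $V_0$. Two small caveats: if property (4) is read as including $i=0$, your $G'$ keeps edges inside $V_0$, so you should delete those as well (the loss is at most $|V_0|\le\epsilon|V|$, harmless except in the degenerate case $d=0$, and most statements of the degree form only require independence of $V_1,\dots,V_l$ anyway); and of course the real content remains the classical lemma itself, which you only sketch via the mean-square-density increment --- but since the paper itself invokes the lemma as a black box, quoting the classical form is entirely in the spirit of the text.
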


An application of the Regularity Lemma in graph theory has a close relation with an application of the Blow-up Lemma. Here we only use the bipartite version of the Blow-up Lemma (see \cite{blowup} for the complete version).

\begin{lemma}[Blow-up Lemma-Bipartite Version]\label{blowup}
For every $\delta, \Delta, c>0$, there exists an $\epsilon =\epsilon(\delta, \Delta, c)>0$ and $\alpha =\alpha(\delta, \Delta, c)>0$ such that the following holds. Let $(X,Y)$ be an $(\epsilon, \delta)$-super-regular pair with $|X|=|Y|=N$. If a bipartite graph $H$ with $\Delta (H)\leq \Delta$ can be embedded in $K_{N,N}$ by a function $\phi$, then $H$ can be embedded  in $(X,Y)$. Moreover, in each $\phi^{-1}(X)$ and $\phi^{-1}(Y)$, fix at most $\alpha N$ special vertices $z$, each of which is equipped with a subset $S_z$ of $X$ or $Y$ of size at least $cN$. The embedding of $H$ into $(X,Y)$ exists even if we restrict the image of $z$ to be $S_z$ for all special vertices $z$.
\end{lemma}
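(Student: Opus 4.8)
The plan is to recast Theorem~\ref{main}: it suffices to partition $V(G)\setminus\{x,y\}$ into two sets $U_1,U_2$ with $|U_1|=\lfloor\frac n2\rfloor-1$ and $|U_2|=\lceil\frac n2\rceil-1$ and then, for each $i\in\{1,2\}$, to find a Hamiltonian path of $G[U_i\cup\{x,y\}]$ with end vertices $x$ and $y$; gluing the two paths at $x$ and at $y$ produces a Hamiltonian cycle $C$ of $G$ whose two $x$--$y$ arcs have lengths $|U_1|+1$ and $|U_2|+1$, so that $dist_C(x,y)=\lfloor\frac n2\rfloor$. Equivalently, it suffices to build a single Hamiltonian cycle of $G$ and then to be able to place $x$ and $y$ on it at ``antipodal'' positions. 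Fix constants $0<\epsilon\ll d\ll\gamma\ll\beta\ll1$ and take $n$ large. Call $G$ \emph{$\beta$-bipartite-extremal} if $V(G)$ has a partition $A\cup B$ with $\big|\,|A|-\frac n2\big|\le\beta n$ and $e(A)+e(B)\le\beta n^2$, and \emph{$\beta$-split-extremal} if $V(G)$ has a partition $A\cup B$ with $\big|\,|A|-\frac n2\big|\le\beta n$ and $e(A,B)\le\beta n^2$; these are the blow-ups of the two obstructions exhibited after Conjecture~\ref{enomoto}. The proof splits into the non-extremal case ($G$ is neither of the above), handled by Lemmas~\ref{regular} and~\ref{blowup}, and the two extremal cases, handled by direct constructions that use $\delta(G)\ge\frac n2+1$ in full.

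\textbf{Non-extremal case.} Apply Lemma~\ref{regular} to $G$ with parameters $\epsilon,d$ and a large $m_0$ (with $\frac1{m_0}\ll\epsilon$); obtain clusters $V_0,V_1,\dots,V_l$ of common size $L$ and a subgraph $G'$, and move $x$ and $y$ into the exceptional set $V_0$. Let $R$ be the reduced graph on $[l]$, with $i\sim j$ iff $(V_i,V_j)$ is $\epsilon$-regular of density exceeding $d$. The degree condition passes down: $\delta(R)\ge(\frac12-\gamma)l$; and if $R$ were close to bipartite or to a disjoint union of two cliques, then $G$ would be $\beta$-bipartite-extremal or $\beta$-split-extremal, so $R$ is a non-extremal, almost-Dirac graph and in particular contains a Hamiltonian cycle $Z$ (an almost-spanning connected matching would also do). Now run the standard regularity--blow-up scheme along $Z$: by Lemmas~\ref{large}, \ref{slicing} and~\ref{chen}, after deleting at most $\epsilon L$ atypical vertices from each cluster, every pair used along $Z$ is $(\epsilon',d/2)$-super-regular; the exceptional vertices (those of $V_0$ together with the deleted ones, at most $2\epsilon n$ in all, including $x$ and $y$) are reinserted one by one into short sub-paths of super-regular pairs, which is possible since each of these vertices has at least $\frac n2$ neighbours, hence more than $\epsilon L$ neighbours in a positive fraction of the clusters; finally, inside each super-regular pair, Lemma~\ref{blowup} (with $\Delta=2$, and its restricted-image option used at the endpoints) produces a spanning path with the prescribed connectors, and concatenation yields a Hamiltonian cycle $C$ of $G$ through $x$ and $y$. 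The remaining, and technically central, point is to choose $Z$, the connecting edges, the routings and---crucially---the placement of the $\Theta(\epsilon n)$ exceptional vertices and of $x,y$ themselves between the two $x$--$y$ arcs so that the arc lengths are \emph{exactly} $\lfloor\frac n2\rfloor$ and $\lceil\frac n2\rceil$; this is feasible because the construction leaves a linear amount of slack, and no parity obstruction intervenes since $G$ is far from bipartite.

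\textbf{Extremal cases.} Here $\delta(G)\ge\frac n2+1$ is indispensable. If $G$ is $\beta$-bipartite-extremal with $V(G)=A\cup B$, first regularise the partition so that $|A|,|B|$ are as balanced as the degree forces and the bipartite graph between $A$ and $B$ is nearly complete (hence super-regular, by Lemma~\ref{chen}). A genuinely complete balanced bipartite graph forces a parity relation between $dist_C(x,y)$ and the sides of $x$ and $y$, so that for some residues of $n\bmod4$ (and whenever the sides are forced unequal) the value $\lfloor\frac n2\rfloor$ is unattainable by a cycle living in the bipartite part; but $\delta(G)\ge\frac n2+1$ forces a linear number of edges inside $A$ and inside $B$ (if $|A|=|B|=\frac n2$, every vertex of $A$ has a neighbour in $A$), and routing the Hamiltonian cycle through one or two well-chosen such edges flips the parity and realises $dist_C(x,y)=\lfloor\frac n2\rfloor$, the rest of the cycle being supplied by Lemma~\ref{blowup}. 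If $G$ is $\beta$-split-extremal with $V(G)=A\cup B$, then $G[A]$ and $G[B]$ are nearly complete, and $\delta(G)\ge\frac n2+1$ forces every vertex on the smaller side to have at least two neighbours across the cut---strictly more than the connector $K_3$ of the extremal graph provides. This lets the desired Hamiltonian cycle cross the cut four or more times, so that both arcs can be given any length in a range of width $\ge(1-o(1))n$ about $\frac n2$; a short case analysis according to whether $x,y$ lie on the same side of the cut completes the proof.

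\textbf{Main obstacle.} The non-extremal case is essentially the standard Regularity--Blow-up pipeline, the only novelty being the need to carry arc-length information through it; this extra bookkeeping, and the verification that non-extremality of $G$ descends to $R$ so that the dichotomy is clean, are routine but must be done with care. The genuinely hard part is the extremal analysis: one must extract from the single extra unit in $\delta(G)\ge\frac n2+1$ exactly the structure missing from the two tight examples---a linear number of internal edges in the bipartite-extremal case, an extra cross-edge at every small-side vertex in the split-extremal case---and convert it, respectively, into the one parity flip or the two extra cut-crossings needed to land $dist_C(x,y)$ precisely on $\lfloor\frac n2\rfloor$, with a further case split on the location of $x$ and $y$ in each regime.
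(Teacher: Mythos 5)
Your proposal does not address the statement you were asked to prove. The statement is Lemma~\ref{blowup}, the bipartite version of the Blow-up Lemma of Koml\'os, S\'ark\"ozy and Szemer\'edi: for an $(\epsilon,\delta)$-super-regular pair $(X,Y)$, any bounded-degree bipartite graph embeddable in $K_{N,N}$ embeds into $(X,Y)$, even with image restrictions on a small set of special vertices. In the paper this is an imported tool, quoted from the literature without proof (its known proof is a substantial argument: a randomized greedy embedding of most of $H$ using super-regularity to keep candidate sets large, followed by a K\"onig--Hall/defect-matching step to place the remaining vertices, with the special-vertex restrictions absorbed into the candidate sets). Nothing in your text engages with any of this: you never embed a bounded-degree graph into a super-regular pair, never use $\epsilon$-regularity to control candidate sets, and never produce the constants $\epsilon(\delta,\Delta,c)$ and $\alpha(\delta,\Delta,c)$ whose existence the lemma asserts.

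What you have written instead is a high-level outline of the paper's main result, Theorem~\ref{main} (the extremal/non-extremal dichotomy, the Regularity--Blow-up pipeline along a Hamiltonian cycle of the reduced graph, the parity repair in the extremal cases). That outline is broadly in the spirit of the paper's Sections~3--5, but as a purported proof of Lemma~\ref{blowup} it is not merely incomplete, it is circular: you explicitly invoke Lemma~\ref{blowup} inside each super-regular pair to produce the spanning paths. So the required content --- an actual proof (or at least a correct reduction to the full Blow-up Lemma of \cite{blowup}, which is how the paper treats it) --- is entirely missing. If the intent was to prove the weaker Lemma~\ref{blow2}, note that the paper derives that one from Lemma~\ref{blowup} together with the Slicing Lemma, again taking Lemma~\ref{blowup} itself as a black box.
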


Actually, we only need the following special case of the Blow-up Lemma in this paper.

\begin{lemma}\label{blow2}
For every $\delta >0$ there are $\epsilon_{BL}=\epsilon_{BL}(\delta)$, $n_{BL}=n_{BL}(\delta)>0$ such that if $\epsilon \leq \epsilon_{BL}$ and $N\geq n_{BL}$, $G=(X,Y)$ is an $(\epsilon, \delta)$-super-regular pair with $|X|=|Y|=N$, $x_1,x_2\in X\ (x_1\not =x_2)$, $y_1,y_2\in Y\ (y_1\not =y_2)$ and $l^i$ is an even integer with $4\leq l^i\leq 2N-4$ $(i=1,2)$, $l^1+l^2=2N$, then there are two vertex-disjoint paths $P_1$ and $P_2$ in $G$ such that the end vertices of $P_i$ are $x_i, y_i$ and $|V(P_i)|=l^i$ $(i=1,2)$.
\end{lemma}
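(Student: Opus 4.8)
The plan is to derive Lemma~\ref{blow2} from the Blow-up Lemma (Lemma~\ref{blowup}), by realizing the two required paths as an embedding of a fixed bipartite graph of maximum degree $2$ into a slight shrinking of the super-regular pair $(X,Y)$, while dealing with the four prescribed endpoints by hand. Two preliminary remarks guide the construction. Since $l^1+l^2=2N$, the paths $P_1$ and $P_2$ together must cover all of $V(G)$. And since each $l^i$ is even with $x_i\in X$, $y_i\in Y$, a path $P_i$ of the required kind necessarily alternates between $X$ and $Y$, using exactly $l^i/2$ vertices on each side (consistently with $l^1/2+l^2/2=N$); if $l^i$ were odd this parity would be impossible, which is why evenness is assumed.

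For the target graph, let $H=Q_1\cup Q_2$ be the vertex-disjoint union of a path $Q_1$ on $l^1$ vertices and a path $Q_2$ on $l^2$ vertices, properly $2$-coloured so that $Q_i$ has colour classes of size $l^i/2$, one endpoint $a_i$ in the ``$X$''-class and the other endpoint $b_i$ in the ``$Y$''-class. Let $a_i'$ and $b_i'$ be the $Q_i$-neighbours of $a_i$ and $b_i$; because $l^i\ge4$ these four vertices exist and $a_i'\ne b_i'$, and $Q_i-\{a_i,b_i\}$ is a path with $l^i/2-1\ge1$ vertices in each colour class. (This is the only place the hypothesis is used; note that, given $l^1+l^2=2N$, the condition $4\le l^i\le2N-4$ is equivalent to $l^1,l^2\ge4$.) Now shrink the pair: put $X':=X\setminus\{x_1,x_2\}$, $Y':=Y\setminus\{y_1,y_2\}$, so $|X'|=|Y'|=N-2$. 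By the Slicing Lemma (Lemma~\ref{slicing}) the pair $(X',Y')$ is $2\epsilon$-regular, and since each vertex of $X'$ (resp.\ $Y'$) has lost at most two neighbours, for $N$ large it keeps more than $\tfrac{\delta}{2}(N-2)$ neighbours on the other side; hence $(X',Y')$ is $\bigl(2\epsilon,\tfrac{\delta}{2}\bigr)$-super-regular. Finally $H':=(Q_1-\{a_1,b_1\})\cup(Q_2-\{a_2,b_2\})$ has maximum degree $\le2$ and colour classes of size $(l^1/2-1)+(l^2/2-1)=N-2$, so it embeds into $K_{N-2,N-2}$ respecting the colour classes.

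Now apply Lemma~\ref{blowup} to $(X',Y')$ with $\delta_{\mathrm{BL}}=\tfrac{\delta}{2}$, $\Delta=2$ and $c=\tfrac{\delta}{2}$, obtaining $\epsilon_0=\epsilon(\tfrac{\delta}{2},2,\tfrac{\delta}{2})$ and $\alpha_0=\alpha(\tfrac{\delta}{2},2,\tfrac{\delta}{2})$; set $\epsilon_{BL}(\delta):=\epsilon_0/2$ and take $n_{BL}(\delta)$ sufficiently large. Declare each $a_i'$ a special vertex of $H'$ sent to the ``$Y$'' side, with admissible set $S_{a_i'}:=N_G(x_i)\cap Y'$, and each $b_i'$ a special vertex sent to the ``$X$'' side, with $S_{b_i'}:=N_G(y_i)\cap X'$; by super-regularity of $(X,Y)$ each of these four sets has size at least $\delta N-2\ge\tfrac{\delta}{2}(N-2)$ once $N\ge n_{BL}(\delta)$, and there are only two special vertices on each side, which is at most $\alpha_0(N-2)$ for $N$ large. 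Since $\epsilon\le\epsilon_{BL}(\delta)$ forces $2\epsilon\le\epsilon_0$, the Blow-up Lemma yields an embedding $\psi$ of $H'$ into $(X',Y')$ with $\psi(a_i')\in N_G(x_i)$ and $\psi(b_i')\in N_G(y_i)$ for $i=1,2$. Extend $\psi$ by $a_i\mapsto x_i$ and $b_i\mapsto y_i$: this is an embedding of $H$ into $G$, since the edges $x_ia_i'$ and $y_ib_i'$ of $H$ become edges of $G$ by the choice of the admissible sets, and injectivity is preserved because $x_i\notin X'$ and $y_i\notin Y'$ avoid the image of $\psi$. Taking $P_i$ to be the image of $Q_i$ under this embedding gives the two vertex-disjoint paths with endpoints $x_i,y_i$ and $|V(P_i)|=l^i$.

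Most of this is bookkeeping; the one genuinely delicate point is that the special-vertex mechanism of the Blow-up Lemma confines the image of a special vertex only to a set of \emph{linear} size, so it cannot on its own force an endpoint of $P_i$ onto the single prescribed vertex $x_i$ or $y_i$. The remedy is exactly the step above: pre-embed the four endpoints to themselves, delete them from the pair, and transfer each endpoint constraint one step into the path, onto $a_i'$ or $b_i'$, where it becomes the linear-sized restriction $N_G(x_i)\cap Y'$, respectively $N_G(y_i)\cap X'$, of the kind the lemma handles. This manoeuvre needs each path to have an interior, i.e.\ $l^i\ge4$; in particular it rules out the degenerate case $l^i=2$, in which one would be forced to use the edge $x_iy_i$, whose existence is not guaranteed.
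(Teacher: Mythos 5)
Your proposal is correct and follows essentially the same route as the paper's proof: delete the four prescribed endpoints, embed the union of the two shortened paths (of $l^i-2$ vertices each) into the remaining super-regular pair via the Blow-up Lemma with the two ends of each shortened path made special and restricted to $N_G(x_i)$ and $N_G(y_i)$, then reattach $x_i$ and $y_i$. You merely spell out the parameter bookkeeping and parity checks that the paper leaves implicit.
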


\begin{proof}
Let $X^*=X-\{x_1,x_2\}$, $Y^*=Y-\{y_1,y_2\}$ and $H=H_1\cup H_2$ be the union of two vertex-disjoint paths $H_1, H_2$ satisfied $|V(H_i)|=l^i-2\ (i=1,2)$. It is not hard to see that $H$ can be embedded in $K_{N-2,N-2}$. By Slicing Lemma, we know that $(X^*,Y^*)$ is also a super-regular pair. Fix the end vertices of $H_1$ and $H_2$ to be the special vertices. For $H_i$, one of its end vertices is equipped with the neighbor set of $x_i$ and the other end vertex is equipped with the neighbor set of $y_i$ ($i=1,2$). By Lemma \ref{blowup}, $H$ can be embedded in $(X^*,Y^*)$ satisfied the restrictions of the special vertices. Since one of the end vertices of $H_i$ is a neighbor of $x_i$ and the other end vertex of $H_i$ is a neighbor of $y_i$, we can extend $H_i$ to a path $P_i$ with end vertices $x_i$ and $y_i$ ($i=1,2$). Then $P_1\cup P_2$ is a spanning subgraph of $G$ and $|V(P_i)|=l^i$ $(i=1,2)$.
\end{proof}

%

\section{Overview of the proof}
Recently many long-standing conjectures about Hamiltonian problems are proved or partially proved by using the Regularity Lemma (see \cite{osthus}, \cite{seymourconj} for some nice results). In our proof for Theorem \ref{main} we will use the Regularity Lemma-Blow-up Lemma method as many other studies (see \cite{chen}, \cite{sarkozy} for some similar ideas).

For proving Theorem \ref{main}, we say that we only need to consider the graphs of even order. Actually, we claim that if the conclusion of Theorem \ref{main} is true for graphs of even order, then it is also true for graphs of odd order. Precisely, for a graph $G$ of odd order $n$, we choose one vertex $v$ in $G$ which is not either of the two vertices $x,y$, then the minimum degree of graph $G^*=G-\{v\}$ is at least $\lceil \frac{n}{2}\rceil=\frac{n-1}{2}+1$. Since $G^*$ is a graph of even order, by assumption we can locate $x,y$ with distance $\frac{n-1}{2}$ on a Hamiltonian cycle $C^*$ of $G^*$. By the degree condition of $v$ in $G$, there exsit two consecutive vertices $u_1,u_2$ on $C^*$ which are adjacent to $v$. Replacing the edge $u_1u_2$ on $C^*$ by the 2-path $u_1vu_2$, we obtain a Hamiltonian cycle of $G$ in which $x,y$ have distance $\frac{n-1}{2}=\lfloor \frac{n}{2}\rfloor$.

Now let us consider a graph $G$ of even order $n$ with
\begin{equation}\label{deg}
\delta (G)\geq \frac{n}{2}+1.
\end{equation}

We assume that $n$ is sufficiently large and we fix the following sequence of parameters,
\begin{equation}\label{para}
0<\epsilon \ll d\ll \alpha \ll1.
\end{equation}
Here $a \ll b$ means $a$ is sufficiently small compared to $b$. For simplicity, we don't specify their dependencies in the proof, although we could.

A {\em balanced partition} of $V(G)$ into $V_1$ and $V_2$ is a partition of $V(G)=V_1\cup V_2$ such that $|V_1|=|V_2|=\frac{n}{2}$. We define two extremal cases as follows.

{\bf Extremal Case 1}: There exists a balanced partition of $V(G)$ into $V_1$ and $V_2$ such that the density $d(V_1,V_2)\geq 1-\alpha$.

{\bf Extremal Case 2}: There exists a balanced partition of $V(G)$ into $V_1$ and $V_2$ such that the density $d(V_1,V_2)\leq \alpha$.

The proof of Theorem \ref{main} will be divided into two parts: the non-extremal case part in Section 4 and the extremal cases part in Section 5. Indeed, due to the parity of $\frac{n}{2}$, our proof will have some cases discussions.

%
%
%
%
%
%

\section{Non-extremal case}

\subsection{Applying the Regularity Lemma}
Let $G$ be a graph not either of the extremal cases and the vertices $x$, $y$ have been chosen. We apply the Regularity Lemma in $G$ with parameter $\epsilon $ and $d$ as in (\ref{para}). We get a partition of $V(G)$ into $l+1$ clusters $V_0, V_1, V_2,...,V_l$. Assume that $l$ is even, if not, we move the vertices of one of the clusters into $V_0$ to make $l$ be an even number. Now $|V_0|\leq 2\epsilon n$ and $lL\geq (1-2\epsilon )n$. Let $k:=\frac{l}{2}$.

We define the following {\em reduced graph} $R$: the vertices of $R$ are $r_1, r_2,..., r_l$, and there is an edge between $r_i$ and $r_j$ if the pair $(V_i, V_j)$ is $\epsilon $-regular in $G^{'}$ with density exceeding $d$. If no ambiguity arises, we won't distinguish the cluster and its corresponding vertex in $R$.

The following claim shows that $R$ inherits the minimum degree condition.

\begin{claim}\label{mdor}
$\delta (R)\geq (\frac{1}{2}-2d)l$.
\end{claim}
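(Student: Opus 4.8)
The plan is the standard degree-counting argument for a reduced graph. Fix an index $i\in\{1,\dots,l\}$ and an arbitrary vertex $v\in V_i$; the goal is to bound $deg_R(r_i)$ from below by controlling how the neighbours of $v$ in $G'$ are distributed among the clusters. Property (4) of Lemma \ref{regular} gives $deg_{G'}(v,V_i)=0$. If $j\neq i$ and $r_ir_j\notin E(R)$, then by property (5) the pair $(V_i,V_j)$ is $\epsilon$-regular with density $0$ or exceeding $d$; since $r_ir_j$ is a non-edge of $R$ the density is $0$, so $v$ has no neighbour in $V_j$ in $G'$. Hence every neighbour of $v$ in $G'$ lies either in $V_0$ or in one of the $deg_R(r_i)$ clusters $V_j$ with $r_ir_j\in E(R)$, which gives
\begin{equation*}
deg_{G'}(v)\ \le\ |V_0|+deg_R(r_i)\cdot L.
\end{equation*}

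Next I would plug in the quantitative data. Property (3) of Lemma \ref{regular} together with (\ref{deg}) gives $deg_{G'}(v)>deg_G(v)-(d+\epsilon)n\ge \frac{n}{2}+1-(d+\epsilon)n$; after making $l$ even we have $|V_0|\le 2\epsilon n$, and since $V_0,V_1,\dots,V_l$ partition $V(G)$ we have $lL\le n$, i.e. $1/L\ge l/n$. Combining,
\begin{equation*}
deg_R(r_i)\ \ge\ \frac{deg_{G'}(v)-|V_0|}{L}\ >\ \frac{l}{n}\Bigl(\frac{n}{2}-(d+3\epsilon)n\Bigr)\ =\ \Bigl(\frac{1}{2}-d-3\epsilon\Bigr)l.
\end{equation*}
Since $\epsilon\ll d$ in the hierarchy (\ref{para}) we have $3\epsilon\le d$, so the last quantity is at least $(\frac{1}{2}-2d)l$. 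As $i$ and $v$ were arbitrary, $\delta(R)\ge(\frac{1}{2}-2d)l$.

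The argument is essentially bookkeeping and I do not anticipate a real obstacle. The two points that merit care are: that a non-edge of $R$ contributes exactly $0$ (not merely a $d$-fraction) to $deg_{G'}(v)$, which is exactly why the degree form of the Regularity Lemma is used, its cleaning step having deleted all edges in the low-density pairs; and the honest accounting of the $\epsilon$-order error terms — the $(d+\epsilon)n$ in property (3) and the $2\epsilon n$ bound on $|V_0|$ coming from the parity correction — all of which are absorbed by the gap between $d$ and $\epsilon$. No quantitative use of $\epsilon$-regularity (Lemma \ref{large} or the Slicing Lemma) is required here.
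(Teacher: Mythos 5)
Your proof is correct and follows essentially the same route as the paper's: the paper sums $deg_{G'}(v)$ over all $v\in V_i$ and divides by $L$, whereas you bound the degree of a single vertex, but the counting, the use of properties (3)--(5) of the degree form, the bound $|V_0|\le 2\epsilon n$, and the final absorption of $3\epsilon$ into $d$ are identical.
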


\begin{proof}
For any cluster $V_i$ ($i\geq 1$), the neighbors of $v\in V_i$ in $G^{'}$ can only be in $V_0$ and in the clusters which are neighbors of $V_i$ in $R$. So for $V_i$,
\begin{equation*}
(\frac{n}{2}+1-(d+\epsilon )n)L\leq \sum\limits_{v\in V_i}deg_{G^{'}}(v)\leq 2\epsilon nL+deg_{R}(r_i)L^2.
\end{equation*}
Thus $deg_{R}(r_i)\geq (\frac{1}{2}-d-3\epsilon )\frac{n}{L}>(\frac{1}{2}-2d)l$ provided $3\epsilon <d$.
\end{proof}

By Claim 4.5 in \cite{chen}, we can get a similar claim as follows. It shows that there exists an upper bound of the independent number of $R$. The proof of this lemma is almost the same as the proof of Claim 4.5 in \cite{chen}, so we omit it here. We need to mention that there are some differences of the parameters between our paper and \cite{chen}, but it won't affect the conclusion.

\begin{claim}\label{independent}
$G$ is a graph which is not either of the extremal cases, then \\
(1) the independent number of $R$ is less than $(\frac{1}{2}-8d)l$,\\
(2) $R$ contains no two disjoint subsets $R_1$, $R_2$ of size at least $(\frac{1}{2}-6d)l$ such that $e_R(R_1,R_2)=0$.
\end{claim}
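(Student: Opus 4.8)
The plan is to prove both parts by contradiction: in each case I turn the forbidden configuration in the reduced graph $R$ into a balanced vertex partition of $G$ and show that this forces $G$ into one of the two extremal cases, contrary to hypothesis. The only structural facts I need are that the clusters $V_i$ are $G'$-independent (property (4) of Lemma \ref{regular}), that a non-edge $r_ir_j$ of $R$ means the $G'$-density of $(V_i,V_j)$ is exactly $0$ (property (5)), and that passing from $G$ to $G'$ destroys at most $\frac12(d+\epsilon)n^2<dn^2$ edges in total (property (3)).

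For part (1), suppose $I\subseteq V(R)$ is independent with $|I|\ge(\frac12-8d)l$. Discarding clusters from $I$ if necessary, I may assume $|I|\le\lfloor l/2\rfloor$, so that $U:=\bigcup_{i\in I}V_i$ has $|U|\le n/2$ and, using $lL\ge(1-2\epsilon)n$, still $|U|\ge(\frac12-9d)n$. Since $I$ is independent, $e_{G'}(U)=0$, hence $e_G(U)<dn^2$; a Markov-type averaging then shows that all but at most $2\sqrt d\,n$ vertices of $U$ have fewer than $\sqrt d\,n$ neighbours inside $U$, and therefore --- using $\delta(G)\ge n/2+1$ --- more than $n/2-\sqrt d\,n$ neighbours outside $U$. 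Extending $U$ to $V_1$ by adding at most $9dn$ arbitrary further vertices and setting $V_2:=V(G)\setminus V_1$ produces a balanced partition. Counting the non-edges between $V_1$ and $V_2$ --- contributions of order $\sqrt d\,n^2$ from the few ``bad'' vertices of $U$ and from $V_1\setminus U$, together with a contribution of order $\sqrt d\,n^2$ from the remaining vertices of $U$, each of which misses at most $2\sqrt d\,n$ vertices of $V_2$ --- gives $d(V_1,V_2)\ge1-c\sqrt d>1-\alpha$ for an absolute constant $c$. Thus $G$ lies in Extremal Case 1, a contradiction.

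Part (2) is entirely analogous, with ``empty cut'' in place of ``independent set'', and produces Extremal Case 2. Given disjoint $R_1,R_2\subseteq V(R)$ with $|R_i|\ge(\frac12-6d)l$ and $e_R(R_1,R_2)=0$, I note that at most one of $R_1,R_2$ can have more than $l/2$ elements, and I shrink the larger one so that both $W_i:=\bigcup_{j\in R_i}V_j$ have $|W_i|\le n/2$ while still $|R_i|\ge(\frac12-6d)l$. By property (5) every pair $(V_a,V_b)$ with $a\in R_1$, $b\in R_2$ has $G'$-density $0$, so $e_{G'}(W_1,W_2)=0$ and hence $e_G(W_1,W_2)<dn^2$. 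The leftover set $V(G)\setminus(W_1\cup W_2)$ has size at most $13dn$; distributing it between $W_1$ and $W_2$ yields a balanced partition $(V_1,V_2)$ with at most $13dn\cdot n$ additional crossing edges, so $d(V_1,V_2)\le 56d<\alpha$. Thus $G$ lies in Extremal Case 2, again a contradiction.

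I expect the real work to be bookkeeping rather than any new idea. One has to pair each configuration with the correct extremal case (a large independent set makes the complement almost completely joined to it --- Case 1 --- while an empty cut makes the cut almost empty --- Case 2), handle the possibility that $|I|>l/2$, respectively $|R_i|>l/2$, by passing to a subset before blowing up, and verify that each error term --- the $2\epsilon n$ coming from $V_0$, the $(d+\epsilon)n$ coming from $G\to G'$, and the $\sqrt d\,n$ coming from the averaging --- stays comfortably below the slack $8dl$, $6dl$, and $\alpha$ permitted by the statement. These comparisons are exactly what the hierarchy $\epsilon\ll d\ll\alpha$ of (\ref{para}) is designed to supply, and the whole argument is a transcription of Claim 4.5 of \cite{chen} with the adjusted constants.
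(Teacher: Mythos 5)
Your proof is correct, and it is exactly the argument the paper has in mind: the paper omits the proof entirely, deferring to Claim 4.5 of \cite{chen}, which proceeds by the same lifting of the forbidden configuration in $R$ to a balanced partition of $G$ (independent set $\Rightarrow$ near-complete bipartition, i.e.\ Extremal Case 1; empty cut $\Rightarrow$ near-empty bipartition, i.e.\ Extremal Case 2), using properties (3)--(5) of the degree form and the hierarchy $\epsilon\ll d\ll\alpha$ to absorb the error terms. Your constants ($2\sqrt d\,n$ bad vertices, $13dn$ leftover, density bound $56d$) all check out.
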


By Claim \ref{independent}, we can say $R$ is Hamiltonian. There are also some similar arguments in \cite{chen}. We just give the claim without proofs (see \cite{chen} for more details).

\begin{claim}
$R$ is a Hamiltonian graph.
\end{claim}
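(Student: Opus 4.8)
The plan is to argue by contradiction: assume $R$ has no Hamiltonian cycle and derive one of the two configurations excluded by Claim \ref{independent}. This follows the extremal-case analysis of \cite{chen}; below I outline the main line.

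First I would record that $R$ is highly connected. If $S$ is a minimum vertex cut of $R$ and $C_1$ is a component of $R-S$, then every $v\in C_1$ has all of its at least $\delta(R)$ neighbours inside $C_1\cup S$, so $|C_1|\ge\delta(R)-|S|+1\ge(\tfrac12-2d)l-|S|+1$, and the same bound holds for a second component. Taking two such components as $R_1,R_2$ gives disjoint sets with $e_R(R_1,R_2)=0$, so if $|S|\le 4dl$ both have size larger than $(\tfrac12-6d)l$, contradicting Claim \ref{independent}(2). Hence $\kappa(R)>4dl\ge 2$. A slightly more careful count, using Claim \ref{independent}(1) to bound the number of singleton components of $R-S$, also controls $\,|S|\,$ against the number of components, which is the kind of structural input needed below.

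Next I would take a longest cycle $C=v_1v_2\cdots v_cv_1$ in $R$; since $R$ is not Hamiltonian, $c<l$. For a component $H$ of $R-V(C)$ with $X=N_C(H)$, the standard longest-cycle exchange arguments give: $v_{i+1}\notin X\cup V(H)$ for $v_i\in X$; the successor set $X^+=\{v_{i+1}:v_i\in X\}$ is independent; and no vertex of $X^+$ has a neighbour in $H$. Consequently $\{w\}\cup X^+$ is independent for any $w\in V(H)$, and since $H$ sends all its edges into $V(H)\cup X$ we have $|X|\ge\delta(R)-|V(H)|+1$, whence $\alpha(R)\ge\delta(R)-|V(H)|+2\ge(\tfrac12-2d)l-|V(H)|+2$. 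If $|V(H)|\le 6dl$ this exceeds $(\tfrac12-8d)l$, contradicting Claim \ref{independent}(1). On the other hand, if two distinct components of $R-V(C)$ each have at least $(\tfrac12-6d)l$ vertices, they are disjoint with no edge between them, contradicting Claim \ref{independent}(2).

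The main obstacle is the intermediate regime: every component of $R-V(C)$ has more than $6dl$ vertices but none reaches $(\tfrac12-6d)l$, so neither argument above applies verbatim. Here one has to combine the successor sets $X_j^+$ of several components of $R-V(C)$, together with the arcs of $C$ lying between consecutive attachment points, into either a single independent set of size at least $(\tfrac12-8d)l$ or a single separation of $R$ into two parts of size at least $(\tfrac12-6d)l$ each with no edge between them. This is precisely the delicate step in the corresponding argument of \cite{chen}, and I would follow it; in every case Claim \ref{independent} is contradicted, so $R$ must be Hamiltonian.
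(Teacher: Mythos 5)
The paper gives no proof of this claim at all --- it simply defers to \cite{chen} --- so there is nothing to compare line by line; your outline is the standard longest-cycle argument from that reference. The problem is that, as written, your attempt is incomplete exactly where you flag it: the regime in which every component $H$ of $R-V(C)$ satisfies $6dl<|V(H)|<(\tfrac12-6d)l$. Your first reduction shows this is the \emph{only} regime that can occur (small components contradict Claim \ref{independent}(1), two huge components contradict Claim \ref{independent}(2)), so the entire burden of the claim sits in the case you dispose of with ``I would follow \cite{chen}.'' That is a genuine gap, not a detail.

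It closes with one additional standard ingredient, after which your case analysis collapses. Your cut-set argument already gives $\kappa(R)\ge 2$ (a cut $S$ with $|S|\le 1$ would yield two components of $R-S$, each of size at least $\delta(R)-|S|+1\ge(\tfrac12-2d)l\ge(\tfrac12-6d)l$ and with no edges between them, contradicting Claim \ref{independent}(2)). By Dirac's theorem on the circumference of $2$-connected graphs, a longest cycle $C$ then has length $c\ge\min\{l,2\delta(R)\}\ge(1-4d)l$ by Claim \ref{mdor}, so every component $H$ of $R-V(C)$ has $|V(H)|\le l-c\le 4dl$. Your successor-set argument now always applies: $\{w\}\cup X^{+}$ is an independent set of size at least $\delta(R)-|V(H)|+2\ge(\tfrac12-6d)l+2>(\tfrac12-8d)l$, contradicting Claim \ref{independent}(1). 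Hence $R-V(C)=\emptyset$ and $R$ is Hamiltonian; the intermediate and two-large-components cases never need to be confronted. With that one addition your argument is complete and is, in substance, the intended one.
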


%
%

\subsection{Constructing paths to connect clusters}

We call a vertex $v$ {\em friendly} to a cluster $X$ if $deg_G(v,X)\geq (d-\epsilon)|X|$. Moreover, given an $\epsilon $-regular pair $(X,Y)$ of clusters and a subset $Y^{'}\subseteq Y$, we call a vertex $v\in X$ {\em friendly} to $Y^{'}$, if $deg(v,Y^{'})\geq (d-\epsilon )|Y^{'}|$. Actually by Lemma \ref{large}, at most $\epsilon |X|$ vertices of $X$ are not friendly to $Y^{'}$ whenever $|Y^{'}|>\epsilon |Y|$.

\begin{claim}\label{friendly}
Every vertex $v\in V(G)$ is friendly to at least $(\frac{1}{2}-2d)l$ clusters.
\end{claim}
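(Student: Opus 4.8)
The plan is to prove this by a direct degree count on $v$, in the same spirit as the proof of Claim~\ref{mdor}: a vertex that is friendly to too few clusters simply cannot accommodate its $\ge \frac n2+1$ neighbours.

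First I would record the two elementary size bounds coming from the partition. Since the clusters $V_1,\dots,V_l$ are pairwise disjoint subsets of $V(G)$, we have $lL\le n$, hence $L\le n/l$; and $|V_0|\le 2\epsilon n$. Next, fix a vertex $v$ and let $t$ be the number of clusters to which $v$ is friendly. Split $N_G(v)$ according to which cluster each neighbour lies in: at most $|V_0|\le 2\epsilon n$ of them lie in $V_0$; at most $L$ of them lie in each of the $t$ friendly clusters; and, by the very definition of ``friendly'', fewer than $(d-\epsilon)L\le dL$ of them lie in each of the remaining $l-t$ clusters. Combining this with $\deg_G(v)\ge \frac n2+1$ from (\ref{deg}) gives
\[
\tfrac n2+1\ \le\ 2\epsilon n+tL+(l-t)dL\ \le\ 2\epsilon n+tL+dn ,
\]
using $lL\le n$ in the last step. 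Solving for $t$ and substituting $1/L\ge l/n$ yields $t\ge\big(\tfrac12-2\epsilon-d\big)l$, which is at least $\big(\tfrac12-2d\big)l$ as soon as $2\epsilon\le d$, and this is guaranteed by the choice $\epsilon\ll d$ in (\ref{para}).

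There is no real obstacle here; the computation is a single inequality manipulation. The only points to watch are to use the upper bound $lL\le n$ (rather than the lower bound $lL\ge(1-2\epsilon)n$ that was convenient for Claim~\ref{mdor}), to note that the argument is unaffected if $v$ happens to lie in $V_0$ or in one of the clusters against which it is compared (in either case the count of neighbours in that part is only smaller), and to check that the final comparison between $2\epsilon+d$ and $2d$ goes in the correct direction.
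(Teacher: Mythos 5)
Your proposal is correct and follows essentially the same argument as the paper: bound $\deg_G(v)$ by at most $L$ neighbours in each friendly cluster, at most $(d-\epsilon)L$ in each non-friendly cluster, and at most $2\epsilon n$ in $V_0$, then compare with $\delta(G)\ge\frac n2+1$ using $lL\le n$ and $2\epsilon\le d$. The paper phrases it as a contradiction while you solve directly for $t$, but the computation is identical.
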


\begin{proof}
Assume for a contradiction that there are less than $(\frac{1}{2}-2d)l$ friendly clusters for $v$. Then
\begin{align*}
deg_G(v) \leq (\frac{1}{2}-2d)lL+(d-\epsilon )Ll+2\epsilon n\leq (\frac{1}{2}-d+2\epsilon )n<\frac{n}{2}
\end{align*}
provided that $2\epsilon <d$, which is a contradiction to (\ref{deg}).
\end{proof}

Let $C_R$ be a Hamiltonian cycle in $R$. We choose two distinct clusters $X$, $Y$ which are as close as possible on $C_R$ such that $x$ is friendly to $Y$ and $y$ is friendly to $X$.

\begin{claim}\label{close}
We can choose distinct clusters $X$ and $Y$ such that $x$ is friendly to $Y$, $y$ is friendly to $X$ and $dist_{C_R}(X,Y)\leq 3dl$.
\end{claim}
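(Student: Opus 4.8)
\textbf{Proof proposal for Claim \ref{close}.}

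The plan is to argue by contradiction: suppose that for every pair of distinct clusters $X,Y$ with $x$ friendly to $Y$ and $y$ friendly to $X$, we have $dist_{C_R}(X,Y)>3dl$. Let $\mathcal{A}$ be the set of clusters to which $x$ is friendly and $\mathcal{B}$ the set of clusters to which $y$ is friendly. By Claim \ref{friendly} we have $|\mathcal{A}|\geq(\frac{1}{2}-2d)l$ and $|\mathcal{B}|\geq(\frac{1}{2}-2d)l$. The contradiction hypothesis says that along $C_R$ every cluster of $\mathcal{A}$ is at distance more than $3dl$ from every cluster of $\mathcal{B}$ (here we should also handle the degenerate possibility $\mathcal{A}\cap\mathcal{B}\neq\emptyset$, which would immediately give a common cluster and hence distance $0$ unless $|\mathcal{A}\cup\mathcal{B}|$ forces $\mathcal{A}$ and $\mathcal{B}$ to be disjoint — but $|\mathcal{A}|+|\mathcal{B}|\geq(1-4d)l$, so $\mathcal{A}\cap\mathcal{B}$ is huge and we are done trivially; thus the real case is when we cannot even pick distinct $X\in\mathcal{B}$, $Y\in\mathcal{A}$, which is absurd for $d$ small). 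So the only surviving scenario is that $\mathcal{A}$ and $\mathcal{B}$ are both large but every $\mathcal{A}$-cluster is separated by more than $3dl$ positions on the cycle from every $\mathcal{B}$-cluster.

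Next I would exploit the cyclic structure of $C_R$. Think of the $l$ clusters as arranged on a cycle $Z_l$. For each cluster $B\in\mathcal{B}$, the "forbidden zone" for $\mathcal{A}$ is the arc of the $6dl$ clusters within distance $3dl$ of $B$; by hypothesis $\mathcal{A}$ is disjoint from the union $\mathcal{N}$ of all these zones. But $\mathcal{B}\subseteq\mathcal{N}$, so $\mathcal{A}$ and $\mathcal{B}$ are disjoint and moreover $|\mathcal{N}|\geq|\mathcal{B}|\geq(\frac{1}{2}-2d)l$. Since $\mathcal{A}\subseteq Z_l\setminus\mathcal{N}$ we get $|\mathcal{A}|\leq l-|\mathcal{N}|\leq (\frac{1}{2}+2d)l$, which is not yet a contradiction; to sharpen it I would instead bound $|\mathcal{N}|$ from below more carefully. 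Writing $\mathcal{B}$ as a union of maximal runs (maximal arcs of consecutive clusters all lying in $\mathcal{B}$), each run of length $t$ fattens into a zone of length at least $t+6dl$ (it grows by $3dl$ on each side), provided distinct runs' zones do not overlap; after merging overlapping zones the total is still at least $|\mathcal{B}|$ plus $6dl$ times the number of "gaps" between consecutive merged zones, and there is at least one gap (the one containing all of $\mathcal{A}$, which is nonempty). Hence $|\mathcal{N}|\geq|\mathcal{B}|+6dl$ when $\mathcal{A}\neq\emptyset$, so $|\mathcal{A}|\leq l-|\mathcal{B}|-6dl\leq l-(\tfrac12-2d)l-6dl=(\tfrac12-4d)l$, contradicting $|\mathcal{A}|\geq(\frac{1}{2}-2d)l$.

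The main obstacle — and the step that needs the most care — is the bookkeeping in the previous paragraph: one must correctly account for overlaps of forbidden zones and for the "wrap-around" on the cyclic cycle $C_R$, making sure the claimed gain of $6dl$ really survives after merging, and that at least one genuine gap exists (which is exactly where $\mathcal{A}\neq\emptyset$, i.e. Claim \ref{friendly} applied to $x$, is used). Everything else is elementary counting with the parameters of (\ref{para}), where $d$ is small enough that $(\frac12-4d)l<(\frac12-2d)l$ is a strict inequality. I would present the zone-fattening argument cleanly, perhaps first reducing to the case $\mathcal{A}\cap\mathcal{B}=\emptyset$ as noted above, and then conclude that some admissible pair $X\in\mathcal{B}$, $Y\in\mathcal{A}$ with $dist_{C_R}(X,Y)\leq 3dl$ must exist, which is precisely the statement of Claim \ref{close}.
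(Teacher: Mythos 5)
Your core argument (the zone-fattening count) is sound and is a genuinely different route from the paper's: the paper argues directly, cutting $C_R$ into maximal $\mathcal{X}$-segments and $\mathcal{Y}$-segments and averaging the leftover clusters over the gaps between segments, whereas you argue by contradiction and charge an extra $6dl$ to each merged forbidden zone. In the case $\mathcal{A}\cap\mathcal{B}=\emptyset$ your bookkeeping is correct: each merged zone has length at least (number of $\mathcal{B}$-clusters it contains) plus $6dl$, there is at least one gap since $\mathcal{A}\neq\emptyset$, and $(\frac12-2d)l\leq|\mathcal{A}|\leq l-|\mathcal{B}|-6dl\leq(\frac12-4d)l$ is the desired contradiction.

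The genuine gap is your treatment of $\mathcal{A}\cap\mathcal{B}\neq\emptyset$. Two things go wrong there. First, a common cluster does \emph{not} "immediately give distance $0$ and we are done": the claim requires two \emph{distinct} clusters $X\neq Y$, so a single cluster friendly to both $x$ and $y$ proves nothing by itself. Second, $|\mathcal{A}|+|\mathcal{B}|\geq(1-4d)l$ only yields $|\mathcal{A}\cap\mathcal{B}|\geq|\mathcal{A}|+|\mathcal{B}|-l\geq-4dl$, i.e.\ nothing; the intersection can perfectly well be empty or tiny, so it is not "huge". Consequently your key sentence "by hypothesis $\mathcal{A}$ is disjoint from $\mathcal{N}$" is false in general: any $Z\in\mathcal{A}\cap\mathcal{B}$ lies in its own zone, hence in $\mathcal{A}\cap\mathcal{N}$, and the inequality $|\mathcal{A}|\leq l-|\mathcal{N}|$ collapses. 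The repair is short but must be made: under your contradiction hypothesis every $Z\in\mathcal{A}\cap\mathcal{B}$ is at cycle-distance more than $3dl$ from every \emph{other} cluster of $\mathcal{A}\cup\mathcal{B}$ (apply the hypothesis to the pairs $(Z,A')$ and $(B',Z)$), so the clusters of $\mathcal{A}\cap\mathcal{B}$ are pairwise at distance more than $3dl$ and hence $|\mathcal{A}\cap\mathcal{B}|<\frac{1}{3d}$. Run your zone argument on $\mathcal{A}\setminus\mathcal{B}$, which \emph{is} disjoint from $\mathcal{N}$, to get $|\mathcal{A}|\leq(\frac12-4d)l+\frac{1}{3d}$; since $l\geq\frac{1}{\epsilon}-2\gg\frac{1}{d^2}$ by (\ref{para}), the additive $\frac{1}{3d}$ is absorbed and the contradiction with $|\mathcal{A}|\geq(\frac12-2d)l$ survives. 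With that case handled, your proof is complete.
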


\begin{proof}
Let $\mathcal{X}$ be the the family of friendly clusters for $x$ and $\mathcal{Y}$ be the the family of friendly clusters for $y$. By Claim \ref{friendly}, $|\mathcal{X}|,|\mathcal{Y}|\geq (\frac{1}{2}-2d)l$. We won't distinguish a cluster and its corresponding vertex on $C_R$.

We call a segment on $C_R$ a {\em $\mathcal{X}$-segment} if it is a maximal segment(or we can say a maximal path) on $C_R$ with both end vertices in $\mathcal{X}$ such that it contains no clusters in $\mathcal{Y}$. Similarly, a {\em $\mathcal{Y}$-segment} is a maximal segment on $C_R$ with both end vertices in $\mathcal{Y}$ such that it contains no clusters in $\mathcal{X}$. Each cluster in $\mathcal{X}\cap\mathcal{Y}$ forms a $\mathcal{X}$-segment($\mathcal{Y}$-segment) with one vertex on $C_R$. Now $C_R$ is divided by all these segments. We choose $X\in \mathcal{X}$ and $Y\in \mathcal{Y}$ such that $X$ and $Y$ are two closest end vertices in two continuous segments on $C_R$.

Suppose $\mathcal{X}\cap\mathcal{Y}$ is equal to $\mathcal{X}$ or $\mathcal{Y}$. Thus $|\mathcal{X}\cap\mathcal{Y}|\geq (\frac{1}{2}-2d)l$. The distance between $X$ and $Y$ should have
$$dist_{C_R}(X,Y)\leq \frac{l-|\mathcal{X}\cap\mathcal{Y}|}{|\mathcal{X}\cap\mathcal{Y}|}+1\leq \frac{l-(\frac{1}{2}-2d)l}{(\frac{1}{2}-2d)l}+1\leq \frac{8d}{1-4d}+2\leq 4$$
provided $d\leq \frac{1}{8}$. Since the distance between two vertices in a path is the number of internal vertices plus one, we have a ``+1'' in the above calculation.

Suppose $\mathcal{X}\cap\mathcal{Y}$ is not equal to either of $\mathcal{X}$ and $\mathcal{Y}$. The number of segments should be no less than $|\mathcal{X}\cap\mathcal{Y}|+2$.

$$dist_{C_R}(X,Y)\leq \frac{l-(|\mathcal{X}|+|\mathcal{Y}|-|\mathcal{X}\cap\mathcal{Y}|)}{|\mathcal{X}\cap\mathcal{Y}|+2}+1\leq \frac{4dl+|\mathcal{X}\cap\mathcal{Y}|}{|\mathcal{X}\cap\mathcal{Y}|+2}+1\leq 2dl+2.$$

Since $n=Ll+|V_0|\leq (l+2)\epsilon n$, we have $l\geq \frac{1}{\epsilon }-2\geq \frac{2}{d}$, provided $\epsilon \leq \frac{d}{4}$. Then $dl\geq 2$, $dist_{C_R}(X,Y)\leq 3dl$.
\end{proof}

By Claim \ref{close} we choose these two clusters $X$ and $Y$. We give a new notation for all clusters except $V_0$. We choose a direction of $C_R$, which is along the longer path from $Y$ to $X$ on $C_R$ (there are two paths from $Y$ to $X$ on $C_R$, and we choose the longer one), then starting from $Y$, we denote the clusters by $Y_1$, $X_2$, $Y_2$, $X_3$, $Y_3$,..., $X_k$, $Y_k$, $X_1$ along this direction (recall that $k=\frac{l}{2}$). $Y$ is denoted by $Y_1$ and $X$ is denoted by a $X_i$ or a $Y_i$.

We need to mention that the parity of $\frac{n}{2}$ and the new notation of $X$ would affect our following discussions. In the following arguments, we assume that $\frac{n}{2}$ is even and $X$ is denoted by $Y_t$. We call this the non-extremal case 1. For the other cases ($\frac{n}{2}$ is odd or $X$ is denoted by some $X_i$), we will discuss them in subsection 4.5.

We know that $t\not =1$. By Claim \ref{close}, $dist_{C_R}(Y_1,Y_t)=l-2t+2\leq 3dl$. So
\begin{equation}\label{t}
t-1\geq \frac{1-3d}{2}l.
\end{equation}
This will be used in subsection 4.4. We call $X_i$, $Y_i$ {\em partners} of each other ($1\leq i\leq k$).

Now we construct some paths to connect $Y_i$ and $X_{i+1}$ ($1\leq i\leq k$). We always say $X_{k+1}=X_1$.

Since $x$ is friendly to $Y_1$, we can choose two neighbors of $x$ in $Y_1$, denoted by $w_x$ and $y_1^1$, such that $w_x$ is friendly to $X_2$ and $y_1^1$ is friendly to $X_1$. Indeed, $x$ has at least $(d-\epsilon)L$ neighbors in $Y_1$ and $(X_1,Y_1)$,$(Y_1,X_2)$ are both regular pairs, so at least $(d-\epsilon)L-\epsilon L$ vertices of $Y_1$ can be chosen as $w_x$ and $y_1^1$. Choose a neighbor of $w_x$ in $X_2$, denoted by $x_2^1$, such that $x_2^1$ is friendly to $Y_2$. We know that at least $(d-\epsilon)L-\epsilon L$ vertices of $X_2$ can be chosen as $x_2^1$. And we have a path from $y_1^1$ to $x_2^1$, precisely $P_1:=y_1^1xw_xx_2^1$. We call this procedure {\em joining $x$} to $Y_1$. Similarly we can construct a path $P_t=y_t^1yw_yx_{t+1}^1$, where $y_t^1\in Y_t$ is friendly to $X_t$, $x_{t+1}^1\in X_{t+1}$ is friendly to $Y_{t+1}$ and $w_y\in Y_t$. We call this procedure {\em joining $y$} to $Y_t$. For $1\leq i\leq k$ and $i\not =1,t$, we choose two adjacent vertices $y_i^1$ and $x_{i+1}^1$ such that $y_i^1\in Y_i$ is friendly to $X_i$ and $x_{i+1}^1\in X_{i+1}$ is friendly to $Y_{i+1}$ (we always use $x_1^1$ to denote $x_{k+1}^1$). It is possible to find these vertices as the argument for $P_1$. Let $P_i$ ($1\leq i\leq k$ and $i\not =1,t$) be the path $y_i^1x_{i+1}^1$, which connects $Y_i$ and $X_{i+1}$. We always call the vertices in $P_i$ ($1\leq i\leq k$) {\em used} vertices.

We need some other vertex-disjoint paths to connect $Y_i$ and $X_{i+1}$ ($1\leq i\leq k$). By the same method, we choose two adjacent unused vertices $y_i^2$ and $x_{i+1}^2$ such that $y_i^2\in Y_i$ is friendly to $X_i$ and $x_{i+1}^2\in X_{i+1}$ is friendly to $Y_{i+1}$ (we always use $x_1^2$ to denote $x_{k+1}^2$). Let $Q_i$ ($1\leq i\leq k$) be the path $y_i^2x_{i+1}^2$, which connects $Y_i$ and $X_{i+1}$. By Lemma \ref{large}, it is possible to find these unused vertices.

Summary that, we have constructed paths $P_i$ and $Q_i$ ($1\leq i\leq k$), which are vertex-disjoint and connect $Y_i$ and $X_{i+1}$ (see Figure 1). $x$ is on $P_1$ and $y$ is on $P_t$. Every end vertex of these paths is friendly to its cluster's partner. We use $INT$ to denote the vertex set of all internal vertices on all $P_i$'s and $Q_i$'s. Now $INT=\{x,y,w_x,w_y\}$.

\begin{figure}[htbp]
\centering
\includegraphics[width=10cm]{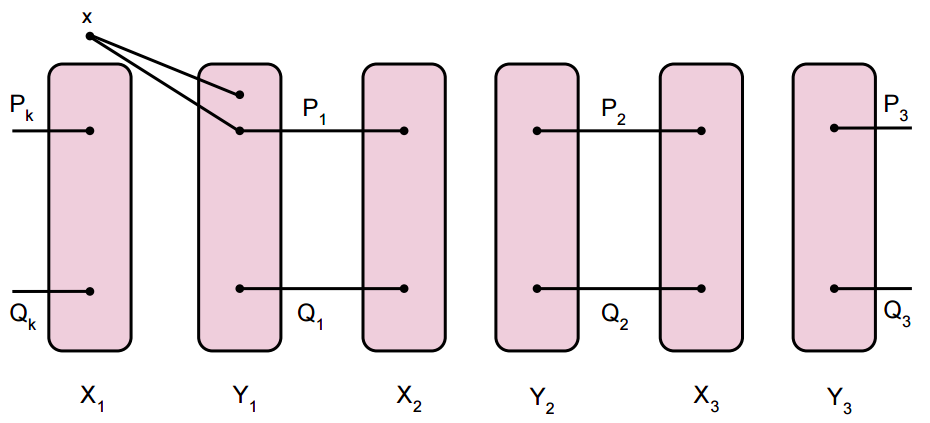}
\caption{Construction of $P_i$'s and $Q_i$'s.}
\end{figure}

For every $1\leq i\leq k$, let

$X_i^{'}:=\{u\in X_i: deg(u,Y_i)\geq (d-\epsilon )L\}$, $Y_i^{'}:=\{v\in Y_i: deg(v,X_i)\geq (d-\epsilon )L\}$.

Since $(X_i,Y_i)$ is $\epsilon $-regular, we have $|X_i^{'}|, |Y_i^{'}|\geq (1-\epsilon )L$. We move all the vertices in $X_i-X_i^{'}$ and $Y_i-Y_i^{'}$ to $V_0$. Meanwhile, we need to make sure $(X_i^{'},Y_i^{'})$ is balanced, so by Lemma \ref{large} we may move at most $\epsilon Ll$ vertices in $(X_i^{'},Y_i^{'})$ to guarantee that. We also remove all the vertices in $INT$ out of $V_0$, $X_i^{'}$ and $Y_i^{'}$. This may cause that some $(X_i^{'},Y_i^{'})$ is not balanced. For example, if $w_x\in Y_1^{'}$, we remove it from $Y_1^{'}$ and make $(X_1^{'},Y_1^{'})$ be not balanced. In this example, to make sure $(X_1^{'},Y_1^{'})$ is balanced, we move a vertex in $X_i^{'}$ to $V_0$. We do the same operations for all the vertices in $INT$. Since $|INT|=4$ and $n$ is sufficiently large, at most $\epsilon Ll+4\leq 2\epsilon n$ vertices are moved to $V_0$. We derive that $|V_0|\leq 4\epsilon n$ and $|X_i^{'}|=|Y_i^{'}|\geq (1-\epsilon )L-1$ ($1\leq i\leq k$) in this step. Since $\epsilon L\geq \epsilon \frac{(1-2\epsilon)n}{l}\geq \frac{\epsilon (1-2\epsilon )}{M_0}n$ and $n$ is sufficiently large, we say $\epsilon L\geq 1$. Thus $|X_i^{'}|=|Y_i^{'}|\geq (1-2\epsilon )L$ ($1\leq i\leq k$). The minimum degree in each pair is at least $(d-\epsilon )L-\epsilon L-1\geq (d-3\epsilon )L$.

\subsection{Handling of all the vertices of $V_0$}
In this step, we extend those paths $Q_i$'s ($1\leq i\leq k$) by adding all the vertices of $V_0$ to them.

If a vertex $v$ is friendly to a cluster $X$, we denote this relation by $v\sim X$. If two clusters $X$ and $Y$ are a regular pair, we denote this relation by $X\sim Y$. Given two vertices $u,v\in V(G)$, a $u,v$-chain of length $2s$ with distinct clusters $A_1,B_1,...,A_s,B_s$ is $u\sim A_1\sim B_1\sim \cdot \cdot \cdot \sim A_s\sim B_s\sim v$ and $\{A_j,B_j\}=\{X_i,Y_i\}$ for some $1\leq i\leq k$. We can say that for each pair of vertices in $V_0$ we have the following claim. There are also some similar discussions in \cite{chen}. Since we need a different bound to finish our proof, we give the claim as follows.

\begin{claim}
For each pair of vertices $\{u,v\}$ in $V_0$, we can find $u,v$-chains of length at most four such that every cluster is used in at most $\frac{d}{10}L$ chains.
\end{claim}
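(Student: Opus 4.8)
The goal is to show that for every pair $\{u,v\}\subseteq V_0$ there is a $u,v$-chain of length at most four, and moreover that we can make these choices simultaneously over all $\binom{|V_0|}{2}$ pairs so that no cluster is overused — each cluster appearing in at most $\frac{d}{10}L$ chains. I would first establish the existence of a short chain for a \emph{single} pair, then upgrade to the simultaneous statement by a counting/greedy argument.

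\emph{Step 1: a single short chain.} By Claim~\ref{friendly}, $u$ is friendly to at least $(\tfrac12-2d)l$ clusters, and likewise $v$. Let $\mathcal{U}$, $\mathcal{V}$ be these friendly families. If $\mathcal{U}$ and $\mathcal{V}$ share a cluster $A$, and $A$'s partner $B$ is regular to enough other clusters, we might even get a chain of length two; in general, pick $A_1\in\mathcal{U}$ and let $B_1$ be its partner (so $\{A_1,B_1\}=\{X_i,Y_i\}$). Since $\delta(R)\geq(\tfrac12-2d)l$, $B_1$ has many neighbors in $R$; using Claim~\ref{independent} we can guarantee that among the $\geq(\tfrac12-2d)l$ clusters $R$-adjacent to $B_1$, there is one, call it $A_2$, whose partner $B_2$ lies in $\mathcal{V}$ — otherwise the neighborhood of $B_1$ in $R$ and the partner-set of $\mathcal{V}$ would be two disjoint sets of size $\geq(\tfrac12-6d)l$ with no edge between them (after accounting for partner pairing), contradicting Claim~\ref{independent}(2). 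This yields $u\sim A_1\sim B_1\sim A_2\sim B_2\sim v$, a chain of length four. One must also arrange $A_1,B_1,A_2,B_2$ to be four \emph{distinct} clusters, which costs only a constant number of excluded clusters and is absorbed by the slack $2d$ versus $6d$.

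\emph{Step 2: bounded usage over all pairs.} There are at most $|V_0|^2\le (4\epsilon n)^2$ pairs (using $|V_0|\le 4\epsilon n$ from the previous subsection), and each chain uses at most $4$ clusters, so the total cluster-usage summed over all pairs is at most $16\epsilon^2 n^2$. Distributed over $l$ clusters this averages $\frac{16\epsilon^2 n^2}{l}$ per cluster; since $n\le (l+2)L$ and $\epsilon\ll d$, this is far below $\frac{d}{10}L$ \emph{on average}, but we need it for \emph{every} cluster. The fix is to process the pairs greedily: when choosing the chain for a given pair, call a cluster \emph{saturated} if it already lies in $\frac{d}{10}L$ previously chosen chains. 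At any moment the number of saturated clusters is at most $\frac{\text{total usage so far}}{(d/10)L}\le\frac{16\epsilon^2 n^2}{(d/10)L}\le \frac{160\epsilon^2}{d}\cdot\frac{n^2}{L}\cdot\frac{1}{n}\cdot n$, which — with $\epsilon\ll d\ll 1$ and $n=\Theta(lL)$ — is at most, say, $dl$, a negligible fraction of $l$. We then re-run the argument of Step~1 but additionally forbidding all saturated clusters; since we only ever need to avoid $O(dl)$ extra clusters, the slack between $(\tfrac12-2d)l$ in Claim~\ref{friendly} and the $(\tfrac12-6d)l$ threshold in Claim~\ref{independent} still suffices to find a valid chain of length $\le 4$.

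\emph{Main obstacle.} The delicate point is Step~1: producing the chain while simultaneously (a) keeping all clusters on the chain distinct, (b) respecting the partner constraint $\{A_j,B_j\}=\{X_i,Y_i\}$, and (c) avoiding the forbidden saturated clusters — all with only a $(\tfrac12-2d)l$ versus $(\tfrac12-6d)l$ margin to play with. The partner constraint is what forces care: friendliness of $u$ to $A_1$ tells us nothing directly about $B_1$, so one must pass through the $R$-neighborhood of $B_1$ and invoke Claim~\ref{independent}(2) in the contrapositive form ``no two large edgeless sets.'' Checking that the various exceptional sets (non-friendly clusters, saturated clusters, already-used clusters on the current chain) together stay below $4dl$ in size, so that $(\tfrac12-2d)l - 4dl \ge (\tfrac12-6d)l$ leaves room, is the computation that makes the argument go through; everything else is routine application of the regularity machinery already set up.
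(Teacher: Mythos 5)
Your overall architecture is the same as the paper's (process the pairs greedily, maintain the set $\mathcal{O}$ of saturated clusters, show $|\mathcal{O}|$ is small, then find a chain avoiding $\mathcal{O}$ via Claim \ref{friendly} and Claim \ref{independent}), but two steps do not go through as written. The first is the counting in your Step 2. The vertices of $V_0$ are handled as a partition into $|V_0|/2\le 2\epsilon n$ disjoint pairs, one chain per pair --- not one chain for each of the $\binom{|V_0|}{2}$ pairs. Under your count the total cluster-usage is $\Theta(\epsilon^2n^2)$, while the total capacity of all clusters is only $l\cdot\frac{d}{10}L=\Theta(dn)$ (recall $l\le M_0$ is bounded and $L=\Theta(n)$), so the conclusion ``every cluster used at most $\frac{d}{10}L$ times'' is arithmetically impossible and your assertion that the average usage is ``far below $\frac{d}{10}L$'' is false: the ratio is of order $\epsilon^2 n/d$, which tends to infinity. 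With the correct count the total usage is at most $4\cdot 2\epsilon n=8\epsilon n$, giving $|\mathcal{O}|\le \frac{80\epsilon n}{dL}=O(\epsilon l/d)\le dl$, which is the bound the paper actually needs.

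The second gap is in your Step 1. You fix a single $A_1\in\mathcal{U}$, pass to its partner $B_1$, and claim that if no $R$-neighbour of $B_1$ has its partner in $\mathcal{V}$, then $N_R(B_1)$ and $P(\mathcal{V})$ are two large disjoint sets with no edge between them. Disjointness of $N_R(B_1)$ and $P(\mathcal{V})$ does not imply $e_R(N_R(B_1),P(\mathcal{V}))=0$, and a single cluster $B_1$ can have all its neighbours outside $P(\mathcal{V})$ without contradicting anything. The argument must be made globally: if the construction fails for every $A_1\in\mathcal{U}$ and every $A_2$ whose partner lies in $\mathcal{V}$, then $e_R(P(\mathcal{U}),P(\mathcal{V}))=0$. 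Even then, Claim \ref{independent}(2) applies only when $P(\mathcal{U})$ and $P(\mathcal{V})$ are disjoint, and they may overlap heavily (e.g., when $u$ and $v$ are friendly to the same clusters). In the overlapping case one needs the paper's separate argument: any $X\in P(\mathcal{U})\cap P(\mathcal{V})$ has $deg_R(X)\ge(\frac12-2d)l$ yet is adjacent to nothing in $P(\mathcal{U})\cup P(\mathcal{V})$, forcing $|P(\mathcal{U})\cap P(\mathcal{V})|\ge(\frac12-8d)l$; this intersection is then an independent set of $R$, contradicting Claim \ref{independent}(1). Your parenthetical ``after accounting for partner pairing'' does not supply this case, and without it the proof is incomplete.
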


\begin{proof}
We deal with the vertices of $V_0$ pair by pair. Suppose we have found the desired chains for $s$ pairs such that no cluster is used in more than $\frac{d}{10}L$ chains. Since $|V_0|\leq 4\epsilon n$, $s<2\epsilon n$. Let $\mathcal{O}$ be the set of clusters which are used $\frac{d}{10}L$ times.

We have a bound on the cardinality of $\mathcal{O}$,
$$\frac{d}{10}L|\mathcal{O}|\leq 4s\leq 8\epsilon n\leq 8\epsilon \frac{2kL}{1-2\epsilon }.$$

So $|\mathcal{O}|\leq \frac{160\epsilon k}{(1-2\epsilon )d}\leq \frac{160\epsilon l}{d}\leq dl$, provided $d^2\geq 160\epsilon$.

Now consider an unused pair $\{u,v\}$ in $V_0$, we try to find a $u,v$-chain of length at most four such that every cluster is used in at most $\frac{d}{10}L$ chains. Let $\mathcal {U}$ be the set of clusters which are friendly to $u$ and not in $\mathcal{O}$. Similarly, let $\mathcal {V}$ be the set of clusters which are friendly to $v$ and not in $\mathcal{O}$. Let $P(\mathcal{U})$ and $P(\mathcal{V})$ be the set of partners of clusters in $\mathcal {U}$ and $\mathcal {V}$ respectively. It is easy to see that $|\mathcal {U}|=|P(\mathcal{U})|$ and $|\mathcal {V}|=|P(\mathcal{V})|$. Moreover, since $|\mathcal{O}|\leq dl$, by Claim \ref{friendly}, we know that $|\mathcal {U}|=|P(\mathcal{U})|\geq (\frac{1}{2}-3d)l$ and $|\mathcal {V}|=|P(\mathcal{V})|\geq (\frac{1}{2}-3d)l$.

If $e_R(P(\mathcal{U}),P(\mathcal{V}))\not =0$, it is not hard to see that we can find a $u,v$-chain of length two or four.

Now we assume $e_R(P(\mathcal{U}),P(\mathcal{V}))=0$. If $P(\mathcal{U})\cap P(\mathcal{V})=\emptyset $, then we find two disjoint vertex subsets of $R$ with size more than $(\frac{1}{2}-3d)l$, which is a contradiction to Claim \ref{independent}.

We assume that there exists a cluster $X\in P(\mathcal{U})\cap P(\mathcal{V})$. So $deg_R(X)\geq (\frac{1}{2}-2d)l$. Since $e_R(P(\mathcal{U}),P(\mathcal{V}))=0$, $X$ is not adjacent to any cluster in $P(\mathcal{U})\cup P(\mathcal{V})$. Thus $|P(\mathcal{U})\cup P(\mathcal{V})|\leq (\frac{1}{2}+2d)l$. Since $|P(\mathcal{U})|\geq (\frac{1}{2}-3d)l$ and $|P(\mathcal{V})|\geq (\frac{1}{2}-3d)l$, $|P(\mathcal{U})\cap P(\mathcal{V})|\geq (\frac{1}{2}-8d)l$. $P(\mathcal{U})\cap P(\mathcal{V})$ is an independent set in $R$, which is a contradiction to Claim \ref{independent}.
\end{proof}

Now we extend those paths $Q_i$'s by using vertices of $V_0$. Recall that the end vertices of $Q_i$ are $y_i^2\in Y_i$ and $x_{i+1}^2\in X_{i+1}$. We deal with the vertices of $V_0$ pair by pair. Assume that we deal with the pair $(u,v)$ now.

If there is a chain of length two between $u$ and $v$, we assume that this chain is $u\sim X_i\sim Y_i\sim v$, for some $1\leq i\leq k$. We choose two adjacent vertices $w_1\in X_i^{'}$ and $w_2\in Y_i^{'}$ such that $w_1$ is a neighbor of $y_i^2$ and $w_2$ is a neighbor of $v$. Since $y_i^2$ is friendly to $X_i$ and $v$ is friendly to $Y_i$, the size of the neighbor sets of $w_1$ and $w_2$ are at least $(d-3\epsilon )L$. By Lemma \ref{large}, it is possible to choose $w_1$ and $w_2$. We choose another neighbor of $v$ in $Y_i^{'}$, denoted by $w_3$. Then we extend $Q_i$ to $Q_i\cup \{w_3v,vw_2,w_2w_1,w_1y_i^2\}$. We still denote this new path by $Q_i$ and call $w_3$ the new $y_i^2$ to make sure that the end vertices of the new $Q_i$ are denoted by $y_i^2\in Y_i$ and $x_{i+1}^2\in X_{i+1}$. Similarly, for $u$, we can choose $w_5, w_6\in X_i^{'}$ and $w_4\in Y_i^{'}$ to extend $Q_{i-1}$ to $Q_{i-1}\cup \{x_i^2w_4,w_4w_5,w_5u,uw_6\}$. We still denote this new path by $Q_{i-1}$ and call $w_6$ the new $x_i^2$ to make sure that the end vertices of $P_{i-1}$ are  $y_{i-1}^2\in Y_{i-1}$ and $x_i^2\in X_{i}$. And we update the set $INT$. Indeed, three vertices of $X_i^{'}$ are added to $INT$ (also for $Y_i^{'}$) and totally eight vertices are added to $INT$ including $u,v$. Since $v$ behaves like a vertex in $X_i^{'}$ and $u$ behaves like a vertex in $Y_i^{'}$, we call this procedure {\em inserting $v$} into $X_i^{'}$ to extend $Q_i$ and {\em inserting} $u$ into $Y_i^{'}$ to extend $Q_{i-1}$ (see Figure 2).

\begin{figure}[htbp]
\centering
\includegraphics[width=9cm]{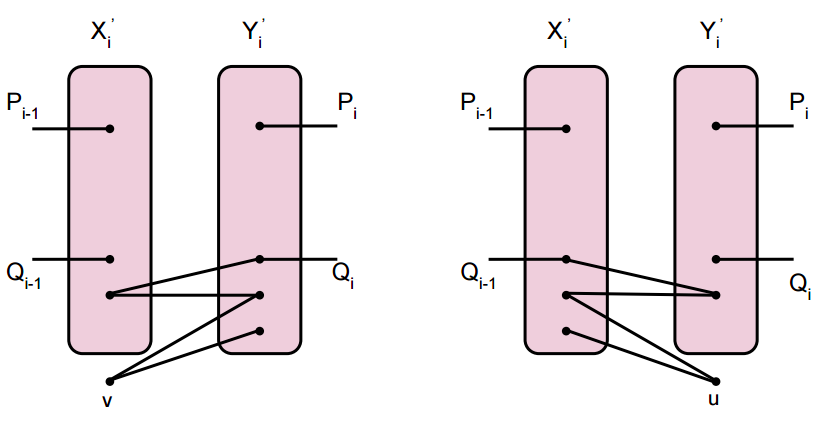}
\caption{Extending $Q_{i-1}$ and $Q_i$ when $u,v$ have a chain of length two.}
\end{figure}

Now we consider that the $u,v$-chain has length four. Without loss of generality, we assume that the chain is $u\sim X_i\sim Y_i\sim X_j\sim Y_j\sim v$, for some $i,j$. We extend the path $Q_{i-1}$ by inserting $u$ into $Y_i^{'}$. We choose a vertex of $Y_i^{'}$ which is friendly to $X_j$ and insert it into $Y_j^{'}$ to extend $Q_{j-1}$. At last we extend the path $Q_{j}$ by inserting $v$ into $X_{j}^{'}$. Meanwhile, we update the set $INT$. Indeed, two vertices of $X_i^{'}$ are added to $INT$ (also for $Y_i^{'}$) and three vertices of $X_j^{'}$ are added to $INT$ (also for $Y_j^{'}$). So totally twelve vertices are added into $INT$ including $u,v$.

We continue this process till there is no vertices left in $V_0$. Denote $X_i^{*}=X_i^{'}-INT$ and $Y_i^{*}=Y_i^{'}-INT$. It is not hard to see that the pair ($X_i^{*},Y_i^{*}$) is still balanced. For inserting each pair of vertices, at most three vertices of a cluster in the chain are used. So $$|X_i^{*}|=|Y_i^{*}|\geq (1-2\epsilon )L-3\frac{d}{10}L\geq (1-\frac{d}{2})L$$
provided $\epsilon <\frac{d}{10}$.

For each vertex $u\in X_i^{*}$, we have
$$deg(u,Y_i^{*})\geq (d-3\epsilon )L-3\frac{d}{10}L\geq \frac{d}{2}L$$
provided $\epsilon <\frac{d}{15}$. And it is the same for the degree of any vertex in $Y_i^{*}$.

Thus by Slicing Lemma, we can say the pair ($X_i^{*},Y_i^{*}$) is ($2\epsilon ,\frac{d}{2}$)-super-regular ($1\leq i\leq k$).

\subsection{Constructing the desired Hamiltonian cycle}

In this step, first we use Lemma \ref{blow2} to construct two paths $W_i^1$ and $W_i^2$ in each pair ($X_i^*,Y_i^*$). Then we combine all these paths with $P_i$'s and $Q_i$'s to obtain a Hamiltonian cycle in $G$. At last we fix the length of $W_i^1$ and $W_i^2$ in each pair to make sure that $x$ and $y$ have distance $\frac{n}{2}$ on this Hamiltonian cycle.

For each $1\leq i\leq k$, we choose any even integers $l_i^1,l_i^2$ such that $4\leq l_i^1,l_i^2\leq 2|X_i^*|-4$ and $l_i^1+l_i^2=2|X_i^*|$. We will fix these integers later.

For $2\leq i\leq k$, by Lemma \ref{blow2}, we construct two paths $W_i^1$ and $W_i^2$ in the pair ($X_i^*,Y_i^*$) such that

\noindent
(a) $W_i^1$ has end vertices $x_i^1$ and $y_i^1$ with $|V(W_i^1)|=l_i^1$;

\noindent
(b) $W_i^2$ has end vertices $x_i^2$ and $y_i^2$ with $|V(W_i^2)|=l_i^2$.

And for $i=1$, we construct two paths $W_1^1$ and $W_1^2$ in the pair ($X_1^*,Y_1^*$) such that

\noindent
(c) $W_1^1$ has end vertices $x_1^1$ and $y_2^1$ with $|V(W_1^1)|=l_1^1$;

\noindent
(d) $W_1^2$ has end vertices $x_1^2$ and $y_1^2$ with $|V(W_2^1)|=l_2^1$.

It is not hard to see
$$C=P_1\cup (\bigcup_{i=2}^k (W_i^1\cup P_i))\cup W_1^1\cup Q_1\cup (\bigcup_{i=2}^k (W_i^2\cup Q_i))\cup W_1^2$$
is a Hamiltonian cycle in $G$.

\begin{figure}[htbp]
\centering
\includegraphics[width=10cm]{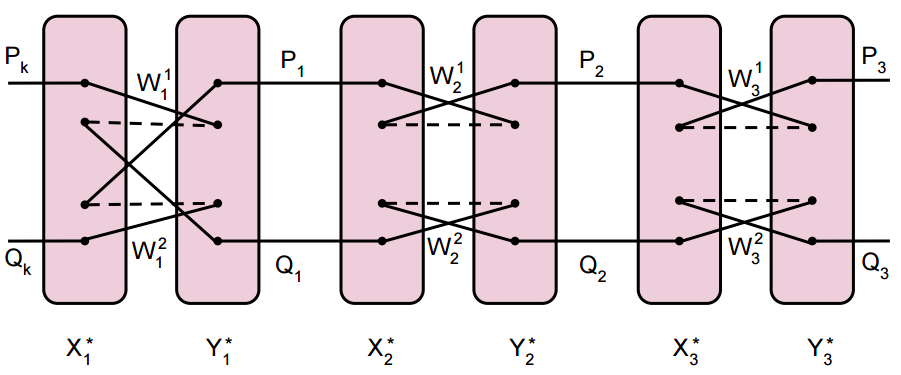}
\caption{Construction of the Hamiltonian cycle $C$.}
\end{figure}

To finish our proof, we need to make sure that $x$ and $y$ have distance $\frac{n}{2}$ on $C$. Our Hamiltonian cycle is constructed in a bipartite graph $(\bigcup X_i)\cup (\bigcup Y_i)$ (take $\bigcup X_i$ as a part and $\bigcup Y_i$ the other one). Since $x$ behaves like a vertex in $X_1$ and $y$ behaves like a vertex in $X_t$, the distance of $x$ and $y$ on $C$ should be an even number. Recall that we assume that $\frac{n}{2}$ is even. So there is no parity problem in this non-extremal case.

\begin{claim}\label{proper}
We can properly choose the value of $l_i^1$ ($2\leq i\leq t$) such that $dist_C(x,y)=\frac{n}{2}$.
\end{claim}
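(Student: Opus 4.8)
The plan is to exploit the fact that the cycle $C$ decomposes into two arcs between $x$ and $y$, and that by sliding lengths among the blow-up paths $W_i^1$ we can move the boundary between these arcs in controlled increments of $2$. First I would trace the cycle $C$ starting at $x$ and compute, in terms of the fixed pieces $P_i$, $Q_i$, $W_1^1$, $W_1^2$ and the variables $l_i^1$ ($2\le i\le k$), the length $D$ of the $x$–$y$ arc of $C$ that runs through the clusters $X_2,Y_2,\dots,X_t$ (i.e.\ the arc $P_1\cup(\bigcup_{i=2}^{t}(W_i^1\cup P_i))$ ending at the copy of $y$ on $P_t$). Because each $P_i$ has a fixed small length and each $W_i^1$ contributes exactly $l_i^1$ vertices, $D$ is an affine function $D=D_0+\sum_{i=2}^{t}l_i^1$ of the free parameters. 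The complementary arc has length $n-D$, so $dist_C(x,y)=\min\{D,\,n-D\}$, and it suffices to achieve $D=\tfrac n2$.

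Next I would quantify the available range. Each $l_i^1$ ranges over even integers in $[4,\,2|X_i^*|-4]$ subject only to $l_i^1+l_i^2=2|X_i^*|$, so for each of the $t-1$ clusters with index $2\le i\le t$ we may independently vary $l_i^1$ over an interval of even integers of length roughly $2|X_i^*|-8$, which by the bound $|X_i^*|\ge(1-\tfrac d2)L$ established in Section~4.3 is at least, say, $L$. Hence $D$ can be made to take every even value in an interval of length at least $(t-1)\cdot(\text{something like }L)$; using $(\ref{t})$, namely $t-1\ge\frac{1-3d}{2}l$, together with $lL\ge(1-2\epsilon)n$, this interval comfortably contains $\tfrac n2$. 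Concretely I would first set every $l_i^1$ to its minimum value $4$ (or its maximum), check that the resulting $D$ is on the correct side of $\tfrac n2$ (and that it is even, which holds since all pieces are even as noted before the claim), and then increase the $l_i^1$'s one at a time, each step changing $D$ by $2$, until $D=\tfrac n2$ exactly — a discrete intermediate-value argument. Once the $l_i^1$ are fixed, the $l_i^2$ are determined, all of them still lie in the admissible even range $[4,2|X_i^*|-4]$, and Lemma~\ref{blow2} applies in every pair $(X_i^*,Y_i^*)$ to realise the paths, so $C$ is a genuine Hamiltonian cycle with $dist_C(x,y)=\tfrac n2$.

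The main obstacle is bookkeeping rather than conceptual: one must verify that the fixed offset $D_0$ (coming from the lengths of $P_1,P_t$, the intermediate $P_i$'s, and the portions of the cycle contributed outside the clusters $X_2,\dots,X_t$) together with the extreme choices of the $l_i^1$'s really does bracket $\tfrac n2$ from both sides, i.e.\ that $D_{\min}\le \tfrac n2\le D_{\max}$. For the lower bound one uses that even taking all $l_i^1=4$ leaves the complementary side so long that $n-D>\tfrac n2$; for the upper bound one uses that taking all $l_i^1$ maximal forces $D$ well past $\tfrac n2$ because $t-1$ is a constant fraction of $l$ by $(\ref{t})$ and $|X_i^*|=|Y_i^*|$ is close to $L$. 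A second, minor point is parity: $D$ is even throughout the sliding process because every $l_i^1$ is even and $D_0$ is even (each $P_i$ has an even number of vertices, $P_1$ and $P_t$ have $4$, the rest have $2$), so the target even value $\tfrac n2$ is actually hit and not merely straddled. I do not anticipate any difficulty beyond these counting estimates.
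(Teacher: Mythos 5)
Your proposal is correct and follows essentially the same route as the paper: express the $x$--$y$ arc length as an affine function of $\sum_{i=2}^{t}l_i^1$, observe that this sum ranges over all even integers between $4(t-1)$ and roughly $2(t-1)(1-\tfrac{d}{2})L$, and use the bound $t-1\geq \frac{1-3d}{2}l$ together with $lL\geq(1-2\epsilon)n$ to check that this interval brackets the (even) target value. The paper just writes this as solving $\sum_{i=2}^{t}l_i^1=\frac{n}{2}-2$ directly rather than as a discrete intermediate-value sweep, but the content is identical.
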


\begin{proof}
We consider this path $P:=P_1\cup (\bigcup_{i=2}^t (W_i^1\cup P_i))$ of the Hamiltonian cycle. We need the distance of $x$ and $y$ on $C$ to be $\frac{n}{2}$, so the vertex number between $x$ and $y$ on $P$ should be $\frac{n}{2}-1$. For the vertices between $x$ and $y$ on $P$, the only vertex not belong to $W_i^1$ ($2\leq i\leq t$) is $w_x$. Thus we need to make sure
\begin{equation}\label{sumofli}
\frac{n}{2}-1=\sum\limits_{i=2}^{t}l_i^1+1.
\end{equation}

Since by Lemma \ref{blow2}, $l_i^1$ can be any even integer such that $4\leq l_i^1\leq 2|X_i^*|-4$. By $|X_i^*|\geq (1-\frac{d}{2})L$, we can choose $l_i^1$ such that $\sum\limits_{i=2}^{t}l_i^1$ can be any even integer with the following bound,
$$4(t-1)\leq \sum\limits_{i=2}^{t}l_i^1\leq 2(t-1)(1-\frac{d}{2})L-4(t-1)=2(t-1)((1-\frac{d}{2})L-2).$$

We know that $t\leq k=\frac{l}{2}$, then $4(t-1)<2l$. Since $l\leq M_0$ in the Regularity Lemma and $n$ is sufficiently large (let $n\geq 4M_0+4$), we can say that $4(t-1)<2l\leq 2M_0\leq \frac{n}{2}-2$.

By (\ref{t}), we also know $t-1\geq \frac{1-3d}{2}l$, so
\begin{eqnarray*}
2(t-1)((1-\frac{d}{2})L-2)&\geq &(1-3d)(1-\frac{d}{2})lL-2(1-3d)l\\ &\geq &(1-\frac{7}{2}d)(1-2\epsilon )n-2l\\ &\geq &(1-4d)n-2M_0\geq \frac{3}{4}n-2M_0,
\end{eqnarray*}
provided $4\epsilon \leq d\leq \frac{1}{16}$ and $l\leq M_0$. Since $n$ is sufficiently large (let $n\geq 8M_0$), we can say that $2(t-1)((1-\frac{d}{2})L-2)\geq \frac{3}{4}n-2M_0\geq \frac{n}{2}$.

So we can choose the values of $l_i^1$ ($2\leq i\leq t$) such that $\sum\limits_{i=2}^{t}l_i^1=\frac{n}{2}-2$ satisfying (\ref{sumofli}).

We choose $l_i^1$ ($2\leq i\leq t$) such that (\ref{sumofli}) holds and arbitrarily choose the even integers  $l_1^1$, $l_i^1$ ($t<i\leq k$) with the conditions in Lemma \ref{blow2}. In this choice of $l_i^1,l_i^2$ ($1\leq i\leq k$), we can make sure that $dist_C(x,y)=\frac{n}{2}$.
\end{proof}

This leads to the end of the proof of the non-extremal case 1.

\subsection{Other non-extremal cases}

We discuss the other non-extremal cases. Suppose $\frac{n}{2}$ is even and $X$ (the cluster friendly to $y$) is denoted by $X_t$ ($1\leq t\leq k$) in the second step of the proof above. We call this the non-extremal case 2. It seems that the method above doesn't work. Since our Hamiltonian cycle is constructed in a bipartite graph $(\bigcup X_i)\cup (\bigcup Y_i)$ and $x$ (resp. $y$) behaves like a vertex in $X_1$ (resp. $Y_t$), we cannot locate $x$ and $y$ with the even distance $\frac{n}{2}$ on the Hamiltonian cycle.

We need some tricks to change the parity. Recall that, in the second step of the proof above, we construct $P_2=y_2^1x_3^1$ and $Q_2=y_2^2x_3^2$. Since $\delta (G)\geq \frac{n}{2}+1$, any two vertices have at least two common neighbors. Suppose that $y_2^1,x_3^1$ have a common neighbor $u_1$ and $y_2^2,x_3^2$ have a common neighbor $u_2\not =u_1$. First, if $u_1$ and $u_2$ are both different with $x$ and $y$, then in the second step of the proof we choose $P_2=y_2^1u_1x_3^1$ and $Q_2=y_2^2u_2x_3^2$. By the same method above we construct the Hamiltonian cycle. It is not hard to see that we can make sure the distance between $x$ and $y$ on this cycle is the even number $\frac{n}{2}$. Second, assume that we cannot find these required $y_2^1, x_3^1, y_2^2, x_3^2$ such that $u_1$ and $u_2$ are both different with $x$ and $y$. We choose $y_2^1$ to be friendly to $X_2$ and $X_3$. There are at least $(1-2\epsilon )L$ possible choices for $y_2^1$ and it is similar for $y_2^2$. We choose $x_3^1$ to be a neighbor of $y_2^1$ and friendly to $Y_2$ and $Y_3$. There are at least $(d-\epsilon )L-2\epsilon L=(d-3\epsilon )L$ possible choices for $x_3^1$ and it is similar for $x_3^2$. By the assumption every these possible $y_2^1, x_3^1, y_2^2, x_3^2$ should be neighbors of $x$ or $y$. So $x$ or $y$ should have at least $\frac{1}{2}(d-3\epsilon )L$ neighbors in $Y_2$ and also in $X_3$. If $y$ has at least $\frac{1}{2}(d-3\epsilon )L$ neighbors in $Y_2$ and in $X_3$, we change the choice of $X$ and choose $Y_2$ to be the $X$. By the degree of $y$ in $Y_2$ we can join $y$ to $X=Y_2$ and we also join $x$ in $Y=Y_1$ as before. Now $x$ and $y$ both behave like vertices in $(\bigcup X_i)$. Otherwise, $x$ has at least $\frac{1}{2}(d-3\epsilon )L$ neighbors in $Y_2$ and in $X_3$. We change the choice of $Y$ and choose $X_3$ to be the $Y$. Since we can join $x$ to $Y=X_3$ and join $y$ to $X=X_t$, $x$ and $y$ both behave like vertices in $(\bigcup Y_i)$. By the choice of $X$ and $Y$, we give a new notation for all clusters and continue the proof as before. Although the calculation in Claim \ref{close} and Claim \ref{proper} will have some minor differences, it won't affect the conclusion.

We consider the cases when $\frac{n}{2}$ is odd. Actually in the second step of the proof, if the selected clusters $X$ and $Y$ belong to the different parts of the bipartite graph $(\bigcup X_i)\cup (\bigcup Y_i)$, the proof is similar to non-extremal case 1. If the selected clusters $X$ and $Y$ belong to the same part of the bipartite graph $(\bigcup X_i)\cup (\bigcup Y_i)$, the proof is similar to non-extremal case 2. We omit these similar proofs here.

\section{Extremal cases}

\subsection{Extremal case 1}
Suppose $G$ is a graph on $n$ vertices with $\delta (G)\geq \frac{n}{2}+1$ and there exists a balanced partition of $V(G)$ into $V_1$ and $V_2$ such that the density $d(V_1,V_2)\geq 1-\alpha$.
We suppose $\alpha \leq (\frac{1}{9})^3$. Let $\alpha_1=\alpha ^{\frac{1}{3}}$ and $\alpha_2=\alpha ^{\frac{2}{3}}$. So $\alpha_1 \geq 9\alpha_2 $.

We need the following lemma to continue our proof.

\begin{lemma}\label{ec1}
If $G$ is in extremal case 1, then $G$ contains a balanced spanning bipartite subgraph $G^*$ with parts $U_1$, $U_2$ and $G^{*}$ has the following properties:

(a) there is a vertex set $W$ such that there exist vertex-disjoint 2-paths (paths of length two) in $G^{*}$ with the vertices of $W$ as the middle vertices (not the end vertices) in each 2-path and $|W|\leq \alpha_2n$;

(b) $deg_{G^*}(v)\geq (1-\alpha_1-2\alpha_2)\frac{n}{2}$ for all $v\not \in W$.
\end{lemma}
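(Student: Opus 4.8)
The plan is to obtain $G^{*}$ by correcting the partition $(V_1,V_2)$ only at its few ``bad'' vertices, and then to absorb those vertices into vertex-disjoint paths of length two. For $i=1,2$ I would set
\[
B_i:=\bigl\{v\in V_i:\ deg_G(v,V_{3-i})<(1-\alpha_1)\tfrac n2\bigr\}.
\]
A one-line edge count bounds $|B_i|$: estimating $e_G(V_1,V_2)$ from the side $V_i$ and using $deg_G(v,V_{3-i})\le\frac n2$ for $v\in V_i\setminus B_i$, one gets $(1-\alpha)\bigl(\tfrac n2\bigr)^2\le e_G(V_1,V_2)\le\bigl(\tfrac n2\bigr)^2-\alpha_1|B_i|\tfrac n2$, whence $|B_i|\le\frac{\alpha}{\alpha_1}\cdot\tfrac n2=\alpha_2\tfrac n2$, because $\frac{\alpha}{\alpha_1}=\alpha_1^2=\alpha_2$. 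Consequently $|B_1\cup B_2|\le\alpha_2 n$, and I will take $W:=B_1\cup B_2$. Finally, the hypothesis $\delta(G)\ge\frac n2+1$ together with the definition of $B_1$ forces $deg_G(v,V_1)\ge\alpha_1\tfrac n2$ for every $v\in B_1$, and symmetrically for $B_2$; this is what makes a bad vertex routable through its \emph{own} original side.

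Next I would flip the bad vertices to the opposite side, starting from $U_1':=(V_1\setminus B_1)\cup B_2$ and $U_2':=(V_2\setminus B_2)\cup B_1$. These two sets differ in size by $2\bigl||B_1|-|B_2|\bigr|$; assuming say $|B_1|\ge|B_2|$, I move $|B_1|-|B_2|$ arbitrarily chosen vertices of $V_2\setminus B_2$ from $U_2'$ to the first part, obtaining a balanced partition $V(G)=U_1\cup U_2$ with $|U_1|=|U_2|=\frac n2$. Let $G^{*}$ be the spanning subgraph of $G$ consisting of exactly the edges between $U_1$ and $U_2$; then $deg_{G^{*}}(v)$ equals $v$'s number of $G$-neighbours in the part opposite to $v$. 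For $v\notin W$ (a good vertex, possibly one of the moved ones), its new opposite part contains all of its original opposite part except at most $|B_1|+|B_2|\le\alpha_2 n$ vertices, so $deg_{G^{*}}(v)\ge(1-\alpha_1)\tfrac n2-\alpha_2 n=(1-\alpha_1-2\alpha_2)\tfrac n2$, which is property (b); since the moved vertices were chosen good, the bound applies to them too.

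For property (a) I would pack the $2$-paths greedily. Take $w\in W$; say $w\in B_1$, so $w\in U_2$, and by the first paragraph $w$ has at least $\alpha_1\tfrac n2$ neighbours in $V_1$, of which all but at most $|B_1|+|B_2|\le\alpha_2 n$ lie in $U_1$, giving more than $(\alpha_1-2\alpha_2)\tfrac n2$ neighbours of $w$ inside $U_1$. Processing the vertices of $W$ one at a time, when we reach $w$ at most $2|W|$ vertices have been used as endpoints of earlier $2$-paths, and forbidding in addition the at most $|W|\le\alpha_2 n$ vertices of $W$ itself leaves at most $3\alpha_2 n$ forbidden vertices; since $(\alpha_1-2\alpha_2)\tfrac n2>3\alpha_2 n$ by $\alpha_1\ge 9\alpha_2$, two unused neighbours $a,b\in U_1$ of $w$ remain, and $a\,w\,b$ is a $2$-path of $G^{*}$ with $w$ as its middle vertex. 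The $2$-paths produced this way are vertex-disjoint and have every vertex of $W$ as a middle vertex (never an endpoint), which is (a).

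The step that needs the most care is the rebalancing in the second paragraph: one must check that restoring $|U_1|=|U_2|$ does not spoil the degree bound, which works only because the imbalance $\bigl||B_1|-|B_2|\bigr|$ and the sets $B_1,B_2$ are all $O(\alpha_2 n)$, and one must check that the greedy packing of $2$-paths never runs out of room, which is precisely where the gap $\alpha_1\ge 9\alpha_2$ is spent. Everything else reduces to routine edge and vertex counting.
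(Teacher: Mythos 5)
Your setup (the sets $B_i$, the edge count giving $|B_i|\le\alpha_2\frac n2$, the observation that $\delta(G)\ge\frac n2+1$ forces $deg(v,V_1)>\alpha_1\frac n2$ for $v\in B_1$, and the greedy packing of the $2$-paths) matches the first half of the paper's argument and is fine. The gap is in the rebalancing step, which you yourself flag as the delicate point but then dispose of with a false claim. When $|B_1|>|B_2|$ you move $|B_1|-|B_2|$ ``good'' vertices of $V_2\setminus B_2$ into the first part, and you assert that for such a moved vertex $v$ ``its new opposite part contains all of its original opposite part except at most $|B_1|+|B_2|$ vertices.'' That is not true: after the move, the part opposite to $v$ is $U_2\subseteq V_2\cup B_1$, i.e.\ essentially $v$'s \emph{own} original side, not $V_1$. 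All you know about $deg(v,V_2)$ is $deg(v,V_2)\ge\delta(G)-|V_1|\ge 1$; it can genuinely be $1$ (e.g.\ if $G[V_2]$ is a perfect matching while $G[V_1]$ has a few high-internal-degree vertices creating the imbalance). Such a vertex then has $G^*$-degree as low as $1$ (or $0$, if its unique $V_2$-neighbour lies in $B_2\subseteq U_1$), so it satisfies neither (b) nor the requirement for membership in $W$ (it cannot be the middle of a $2$-path in $G^*$). Choosing the moved vertices ``arbitrarily'' cannot work.

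This is exactly the difficulty the paper's proof spends its entire second case on. There, one first tries to move only vertices with internal degree at least $\alpha_1\frac n2$ (which can safely be absorbed into $W$), and when there are not enough of those, one uses a min-degree/max-degree counting argument to show that the induced graph on the oversized side contains at least $t$ vertex-disjoint $2$-paths, whose \emph{middle} vertices are then transferred --- so that each transferred vertex arrives together with a ready-made $2$-path back into the side it left. Your proposal needs an analogue of this mechanism (a criterion for \emph{which} vertices may be moved, plus an existence argument when too few vertices meet the criterion); without it the construction of $G^*$ breaks down whenever $|B_1|\ne|B_2|$.
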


\begin{proof}
For $i=1,2$, let $V_i^*=\{v\in V_i: deg(v,V_{3-i})\geq (1-\alpha_1)\frac{n}{2}\}$.

We claim that $|V_i-V_i^*|\leq \alpha_2\frac{n}{2}$. Otherwise
$$d(V_1,V_2)< \frac{\frac{\alpha_2n}{2}(1-\alpha_1)\frac{n}{2}+\frac{n}{2}(\frac{1}{2}-\frac{\alpha_2}{2})n}{(\frac{n}{2})^2}=\alpha_2(1-\alpha_1)+(1-\alpha_2)=1-\alpha,$$
which is a contradiction. So $|V_i^*|\geq (1-\alpha_2)\frac{n}{2}$.

For any vertex $v\in V_i-V_i^*$, if $deg(v,V_i)\geq (1-\alpha_1)\frac{n}{2}$, we also add it to $V_{3-i}^*$. We denote the two resulting sets by $V_i^{'}$ ($i=1,2$) and let $V_0=V-V_1^{'}-V_2^{'}$. We have $|V_0|\leq \alpha_2n$. For every vertex $v$ in $V_i^{'}$,
\begin{equation}\label{lemmadegree}
deg(v,V_{3-i}^{'})\geq (1-\alpha_1)\frac{n}{2}-\alpha_2\frac{n}{2}.
\end{equation}

For every vertex $u$ in $V_0$,
$$deg(u,V_i^{'})> (\frac{n}{2}-(1-\alpha_1)\frac{n}{2})-\alpha_2\frac{n}{2}\geq (\alpha_1-\alpha_2)\frac{n}{2}.$$

First, we assume $|V_1^{'}|,|V_2^{'}|\leq \frac{n}{2}$. Let $W=V_0$ and we add all the vertices in $V_0$ to $V_1^{'}$ and $V_2^{'}$ such that the final two sets are of the same size. We denote the final two sets by $U_1$ and $U_2$ corresponding to $V_1^{'}$ and $V_2^{'}$ respectively. Let $W_1=U_1-V_1^{'}$ and $W_2=U_2-V_2^{'}$. Thus $W=W_1\cup W_2$. Since for each vertex $u\in W_i$, $deg(u,V_{3-i}^{'})>(\alpha_1-\alpha_2)\frac{n}{2}\geq 2\alpha_2n\geq 2|W_i|$, we can greedily choose two neighbors of $u$ in $V_{3-i}^{'}$ such that the neighbors of all the vertices of $W_i$ are distinct ($i=1,2$). So $W,U_1,U_2$ are what we need. The degree condition is $deg_{G^*}(v)\geq (1-\alpha_1-\alpha_2)\frac{n}{2}$ by (\ref{lemmadegree}) for all $v\not \in W$.

Second, without loss of generality we assume $|V_1^{'}|>\frac{n}{2}$. Let $W_1$ be the set of vertices $v\in V_1^{'}$ such that $deg(v,V_1^{'})\geq \alpha_1\frac{n}{2}$.

If $|W_1|\geq |V_1^{'}|-\frac{n}{2}$, we take $W$ to be the set of all vertices of $V_0$ and arbitrary $|V_1^{'}|-\frac{n}{2}$ vertices of $W_1$. Let $U_1=V_1^{'}-W$ and $U_2=V_2^{'}\cup W$. We know that $|W|\leq \alpha_2\frac{n}{2}$. So for every vertex $u\in W$, we have
\begin{equation*}
deg(u,U_1)>(\alpha_1-\alpha_2)\frac{n}{2}-\alpha_2\frac{n}{2}\geq \alpha_2n\geq 2|W|.
\end{equation*}
Similarly, we can greedily choose two neighbors of $u$ in $U_1$ such that the neighbors of all the vertices of $W$ are distinct. The degree condition is $deg_{G^*}(v)\geq (1-\alpha_1-\alpha_2)\frac{n}{2}-\alpha_2\frac{n}{2}=(1-\alpha_1-2\alpha_2)\frac{n}{2}$ by (\ref{lemmadegree}) for all $v\not \in W$.

Now we assume $|W_1|< |V_1^{'}|-\frac{n}{2}$. Let $U_1=V_1^{'}-W_1$ and $U_2=V_2^{'}\cup V_0\cup W_1$. Let $t=|U_1|-\frac{n}{2}$, so $t\leq \alpha_2\frac{n}{2}$. Considering the induced graph $G[U_1]$, we know that
$$\delta (G[U_1])\geq \delta (G)-|U_2|\geq \frac{n}{2}+1-(\frac{n}{2}-t)\geq t+1;$$
$$\Delta(G[U_1])\leq \alpha_1\frac{n}{2}.$$

Suppose $G[U_1]$ has a biggest family of vertex-disjoint 2-paths on a vertex set $S$ and the number of those vertex-disjoint 2-paths is $s$. We consider the number of edges between $S$ and $G[U_1]-S$. So
$$t(\frac{n}{2}-3s)\leq \delta (G[U_1])(|U_1|-3s)\leq 3s\Delta(G[U_1])\leq 3s\alpha_1\frac{n}{2}.$$
We can get
\begin{equation}\label{s}
s\geq \frac{\frac{n}{2}t}{3(t+\alpha_1\frac{n}{2})}\geq \frac{\frac{n}{2}t}{3(\alpha_2\frac{n}{2}+\alpha_1\frac{n}{2})}\geq \frac{t}{3(\alpha_2+\alpha_1)}>t.
\end{equation}

So $G[U_1]$ has at least $t$ vertex-disjoint 2-paths. We choose $t$ vertex-disjoint 2-paths in $G[U_1]$ and move the middle vertices of all these vertex-disjoint 2-paths to $U_2$. Now $|U_1|=|U_2|=\frac{n}{2}$. Let $W$ be the union of $V_0\cup W_1$ and all these middle vertices. For any vertex $u\in V_0\cup W_1$,
$$deg(u,U_1)-3\alpha_2\frac{n}{2}>(\alpha_1-\alpha_2)\frac{n}{2}-\alpha_2\frac{n}{2}-3\alpha_2\frac{n}{2}\geq 2|V_0\cup W_1|.$$

We can find vertex-disjoint 2-paths in $G[U_1,V_0\cup W_1]$ with all the vertices of $u\in V_0\cup W_1$ as middle vertices such that these 2-paths are all vertex-disjoint with those existing 2-paths. And $deg_{G^*}(v)\geq (1-\alpha_1-2\alpha_2)\frac{n}{2}$ for all $v\not \in W$ as before.
\end{proof}

Now we construct the desired Hamiltonian cycle in $G$. In the proof of Lemma \ref{ec1}, we know that most of  the 2-paths are greedily chosen, so we assume that $x,y$ won't be any end vertices of those 2-paths (actually in the last part of the proof of Lemma \ref{ec1}, we can also assume those moved 2-paths won't have $x,y$ as the end vertices by (\ref{s})). But $x,y$ can be the middle vertex of a 2-path.

First, assume $\frac{n}{2}$ is odd. By Lemma \ref{ec1}, we obtain a spanning bipartite graph $G^*$.

\noindent
{\em Sub-case 1:} suppose $x,y$ are in different parts of $G^*$, say $x\in U_1, y\in U_2$.

Assume $W\not =\emptyset$ and $x,y\not \in W$. We need the following claim to string all the vertices of $W$ in a path.

\begin{claim}\label{ec1c}
We can construct a path $P$ with end vertices $x_1\in U_1$ and $y_1\in U_2$ such that $P$ contains all the vertices of $W$ and $|V(P)|=4|W|$.
\end{claim}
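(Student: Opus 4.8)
The plan is to build $P$ out of the vertex-disjoint $2$-paths supplied by Lemma~\ref{ec1}. Each such $2$-path has three vertices (a vertex of $W$ in the middle and two vertices outside $W$ at the ends); I would extend every one of them by a single extra vertex into a $4$-vertex \emph{block}, and then splice the $|W|$ blocks into one path using only single connecting edges. Since $|W|$ blocks of four vertices joined by edges have exactly $4|W|$ vertices, the target size is automatic, provided every splice can really be realised by an edge.

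Concretely, write $W=W_1\cup W_2$ with $W_i=W\cap U_i$, and for $w\in W$ let $a_w\,w\,b_w$ be the $2$-path of Lemma~\ref{ec1}; if $w\in U_i$ then $a_w,b_w\in U_{3-i}$, and since the $2$-paths are vertex-disjoint with the vertices of $W$ as their middle vertices, the ends $a_w,b_w$ lie outside $W$, hence have $G^*$-degree at least $(1-\alpha_1-2\alpha_2)\frac{n}{2}$ by Lemma~\ref{ec1}(b); by the remark preceding the claim we may also assume none of $a_w,b_w$ is $x$ or $y$. I would then turn each $2$-path into a block $B_w=a_w\,w\,b_w\,c_w$ by choosing a neighbour $c_w$ of $b_w$ lying in the part opposite to $b_w$ (equivalently, in the same part $U_i$ as $w$); here $a_w$ is a fixed end while $c_w$ is a ``free'' vertex we keep at our disposal. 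Being a $4$-vertex path in the bipartite graph $G^*$, each block has exactly one endpoint in $U_1$ and one in $U_2$, and its $U_2$-endpoint is the free vertex $c_w$ precisely when $w\in W_2$ (and is the fixed vertex $a_w$ when $w\in W_1$).

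Now order the blocks so that all blocks arising from $W_1$ precede all blocks arising from $W_2$, say $B_1,\dots,B_{|W|}$ in this order, and form $P$ by traversing $B_1$ from its $U_2$-end to its $U_1$-end, then taking an edge to the $U_2$-end of $B_2$, traversing $B_2$ to its $U_1$-end, an edge to the $U_2$-end of $B_3$, and so on, ending at the $U_1$-end of $B_{|W|}$; each connecting edge joins a vertex of $U_1$ to a vertex of $U_2$, so it is a legal slot. The point of this ordering is a parity check: the only transition between blocks of different origin is a single $W_1\to W_2$ step, and one verifies for each transition type that actually occurs ($W_1\to W_1$, the one $W_1\to W_2$, $W_2\to W_2$) that at least one of the two vertices to be joined by the connecting edge is still a free $c$-vertex. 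Hence I can process the blocks from left to right and pick each $c_w$ greedily as a common neighbour of the (at most two) already-determined vertices it must be adjacent to; such a common neighbourhood has size at least $(1-2\alpha_1-4\alpha_2)\frac{n}{2}$, which (using $\alpha_1\le\frac19$, $\alpha_2\le\frac1{81}$) far exceeds the number $O(|W|)=O(\alpha_2 n)$ of vertices already used, together with $x,y$, so the greedy choices never fail. The resulting $P$ has exactly $4|W|$ vertices, contains every vertex of $W$, and its two endpoints are the $U_2$-end of $B_1$ and the $U_1$-end of $B_{|W|}$, which supplies the required $y_1\in U_2$ and $x_1\in U_1$.

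The one genuinely delicate point is this bipartite-parity bookkeeping: a careless concatenation order would produce a transition (exactly a $W_2\to W_1$ one) in which the two vertices to be joined are both fixed ends $a_w$, $a_{w'}$, between which no edge need exist, and repairing it would cost an extra connector vertex and destroy the exact count $4|W|$. Putting all $W_1$-blocks ahead of all $W_2$-blocks eliminates that transition, after which everything reduces to the routine greedy argument above powered by the degree bound of Lemma~\ref{ec1}(b). (The degenerate cases $W_1=\emptyset$, $W_2=\emptyset$, or $|W|=1$ are checked directly and cause no difficulty.)
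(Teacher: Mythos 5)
Your construction is correct and is essentially the paper's argument: both string the given $2$-paths together with greedily chosen connector vertices (justified by the degree bound of Lemma~\ref{ec1}(b)), placing all $W_1$-pieces before all $W_2$-pieces so that every junction in the bipartite graph $G^*$ can be realised. The only difference is bookkeeping — you pad each $2$-path to a uniform $4$-vertex block joined by single edges, while the paper builds two sub-paths of $4|W_1|-1$ and $4|W_2|-1$ vertices and bridges them with two extra vertices — and both yield exactly $4|W|$ vertices with endpoints in the required parts.
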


\begin{proof}
Partition $W=W_1\cup W_2$ with $W_1=W\cap U_1$ and $W_2=W\cap U_2$. Suppose that $W_1=\{w_1,w_2,...,w_t\}$ and the two end vertices of the 2-path containing $w_i$ are $a_i,b_i$ ($1\leq i\leq t$). Since $deg_{G^*}(a_{i+1})\geq (1-\alpha_1-2\alpha_2)\frac{n}{2}$ and $deg_{G^*}(b_i)\geq (1-\alpha_1-2\alpha_2)\frac{n}{2}$ by Lemma \ref{ec1}, $a_{i+1}$ and $b_i$ have at least $(1-2\alpha_1-4\alpha_2)\frac{n}{2}$ common neighbors in $G^*$ ($1\leq i\leq t-1$). We greedily choose $c_i\in U_1$ which is a common neighbor of $a_{i+1},b_i$ ($1\leq i\leq t-1$). Since $|W|\leq \alpha_2 n$, we can choose all these $c_i$'s such that they are distinct. Let $P_1=a_1w_1b_1c_1a_2w_2b_2...c_{t-1}a_tw_tb_t$. $P_1$ contains all the vertices of $W_1$ and $|V(P_1)|=4|W_1|-1$. Similarly, we can construct another path $P_2$ which contains all the vertices of $W_2$ and $|V(P_2)|=4|W_2|-1$. Suppose the end vertices of $P_2$ are $u,v\in U_1$. We choose an unused neighbor of $v$ in $U_2$, denoted by $v^{'}$, and choose a common unused neighbor of $v^{'}, b_t$ in $U_1$, denoted by $u^{'}$. This is possible because all the vertices of $V(G^*)-V(P_1)-V(P_2)$ have degree at least $(1-\alpha_1-2\alpha_2)\frac{n}{2}-4\alpha_2 n\geq (1-\alpha_1-10\alpha_2)\frac{n}{2}$.

Let $P=P_1\cup P_2\cup \{b_tu^{'},u^{'}v^{'},v^{'}v\}$, which is the path we need. $|V(P)|=4|W|\leq 4\alpha_2n$. We denote the end vertices of $P$ by $x_1\in U_1$ and $y_1\in U_2$.
\end{proof}

Let $U_1^{*}=U_1-V(P)$ and $U_2^{*}=U_2-V(P)$. By the proof of Claim \ref{ec1c}, we can say $|U_1^{*}|=|U_2^{*}|$. For any vertex $u\in U_1^{*}$, $deg_{U_2^{*}}(u)\geq (1-\alpha_1-2\alpha_2)\frac{n}{2}-4\alpha_2n=(1-\alpha_1-10\alpha_2)\frac{n}{2}$ and for any vertex $v\in U_2^{*}$, $deg_{U_1^{*}}(v)\geq (1-\alpha_1-10\alpha_2)\frac{n}{2}$. We choose two unused neighbors of $x$ (resp. $y$), denoted by $y_2, y_3$ (resp. $x_2,x_3$), choose a common unused neighbor of $y_1,y_2$ in $U_1^*$, denoted by $x_4$, and choose an unused neighbor of $x_2$ in $U_2^*$, denoted by $y_4$. Let $U_1^{'}=(U_1^*-\{x,x_2,x_4\})\cup \{x_1\}$ and $U_2^{'}=U_2^*-\{y,y_1,y_2\}$ and $n^{'}=|U_1^{'}|=|U_2^{'}|\leq \frac{n}{2}$. For any vertex $u$ in $U_1^{'}$, $deg_{U_2^{'}}(u)\geq (1-\alpha_1-10\alpha_2)\frac{n}{2}-3$ and for any vertex $v$ in $U_2^{'}$, $deg_{U_1^{'}}(v)\geq (1-\alpha_1-10\alpha_2)\frac{n}{2}-3$. Since $n$ can be sufficiently large, we can say $(1-\alpha_1-10\alpha_2)\frac{n}{2}-3\geq (1-\alpha_1-11\alpha_2)\frac{n}{2}$. For any vertex $u$ in $U_1^{'}$, $deg_{U_2^{'}}(u)\geq (1-\alpha_1-11\alpha_2)\frac{n}{2}\geq (1-\alpha_1-11\alpha_2)n^{'}$ and for any vertex $v$ in $U_2^{'}$, $deg_{U_1^{'}}(v)\geq (1-\alpha_1-11\alpha_2)n^{'}$. By Lemma \ref{chen}, we can say $(U_1^{'},U_2^{'})$ is $(\sqrt{\alpha_1+11\alpha_2},1-\alpha_1-11\alpha_2)$-super-regular. Since $\alpha \leq (\frac{1}{9})^3$, $1-\alpha_1-11\alpha_2\geq \frac{2}{3}$. We can say $(U_1^{'},U_2^{'})$ is $(\sqrt{\alpha_1+11\alpha_2},\frac{2}{3})$-super-regular. Applying Lemma \ref{blow2} to the pair $(U_1^{'},U_2^{'})$, we construct two vertex-disjoint paths $P_1$ and $P_2$ such that the end vertices of $P_1$ are $x_1, y_4$, the end vertices of $P_2$ are $x_3, y_3$ and $|V(P_i)|$ is $l^i$ $(i=1,2)$. We denote $P_3$ to be the path $P_3:=y_1x_4y_2xy_3$ and $P_4$ to be the path $P_4:=x_3yx_2y_4$. Then
$$C=P_1\cup P_2\cup P_3\cup P_4\cup P$$
is a Hamiltonian cycle in $G$ (see Figure 4, Sub-case 1). We fix $l^1=\frac{n}{2}-1-(|V(P)|-2+4)=\frac{n}{2}-|V(P)|-3$ and $l^2=2n^{'}-l^1$. Thus it is not hard to see that $x$ and $y$ have distance $\frac{n}{2}$ on $C$.

Now assume that at least one of $x,y$ is in $W$, without loss of generality, we say $x\in W$. We can similarly construct a path $P$ with end vertices $x_1\in U_1$ and $y_1\in U_2$ such that $P$ contains all the vertices of $W-\{x,y\}$ as in Claim \ref{ec1c}. We need to make sure $P$ won't use the vertices of the 2-path which contains $x$. It is possible because we always greedily choose the vertices to construct $P$. So we can still find the unused neighbors of $x$ and finish the proof as before.

We also need to consider $W=\emptyset$. In $G^*$ we choose two neighbors of $x$, denoted by $y_1,y_2$, and two neighbors of $y$, denoted by $x_1,x_2$. Let $U_1^{'}=U_1-\{x\}$ and $U_2^{'}=U_2-\{y\}$. By Lemma \ref{chen} and Lemma \ref{ec1}, we can say $(U_1^{'},U_2^{'})$ is $(\sqrt{\alpha_1+3\alpha_2},\frac{2}{3})$-super-regular. Let $l^1=l^2=\frac{n}{2}-1$. By Lemma \ref{blow2}, we construct two vertex-disjoint paths such that the end vertices of $P_1$ are $x_1, y_1$, the end vertices of $P_2$ are $x_2, y_2$ and $|V(P_i)|$ is equal to $l^i$ $(i=1,2)$. Let $P_3:=y_1xy_2$ and $P_4:=x_1yx_2$. So $C=P_1\cup P_2\cup P_3\cup P_4$ is our desired Hamiltonian cycle.

\begin{figure}[htbp]
\centering
\includegraphics[width=13cm]{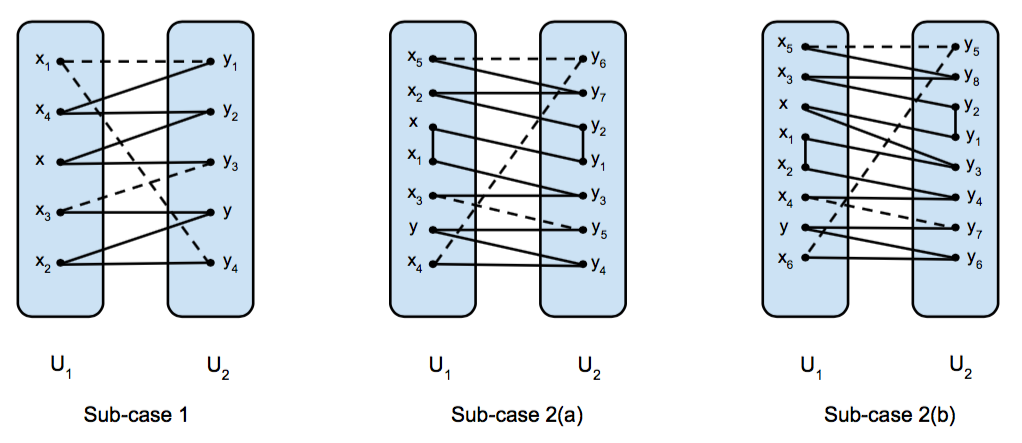}
\caption{Extremal case 1.}
\end{figure}

\noindent
{\em Sub-case 2:} suppose $x,y$ are in the same part of $G^*$, without loss of generality, say $x,y\in U_1$.

Since the construction in sub-case 1 is always in the bipartite graph $(U_1,U_2)$ and $\frac{n}{2}$ is odd, it seems that the same method doesn't work in this case. Actually we need some edges in $G[U_1]$ and $G[U_2]$ to change the parity.

Assume $W\not =\emptyset$ and $x,y\not \in W$ (if one of $x,y$ is in $W$, the discussion is almost the same as discussed before). Since $\delta (G)\geq \frac{n}{2}+1$, $x$ should have a neighbor in $U_1$.

Assume this neighbor, denoted by $x_1$, is not $y$. We choose a neighbor of $x$ in $U_2-W$, denoted by $y_1$ and choose a neighbor of $y_1$ in $U_2$, denoted by $y_2$. Whether $x_1$ is in $W$ or not, we can find an unused neighbor of $x_1$ in $U_2-W$, denoted by $y_3$, and choose an unused neighbor of $y_3$ in $U_1-W$, denoted by $x_3$. Whether $y_2$ is in $W$ or not, we can find an unused neighbor of $y_2$ in $U_1-W$, denoted by $x_2$. We choose two unused neighbors of $y$ in $U_2-W$, denoted by $y_4,y_5$, and an unused neighbor of $y_4$ in $U_1-W$, denoted by $x_4$. Since $deg_{G^*}(v)\geq (1-\alpha_1-2\alpha_2)\frac{n}{2}$ for all $v\not \in W$, it is possible to choose all these vertices as discussed in sub-case 1. By the same method of Claim \ref{ec1c}, we can construct a path $P$ with end vertices $x_5\in U_1$ and $y_6\in U_2$ such that $P$ contains all the unused vertices of $W$ and $|V(P)|\leq 4|W|$. Since the vertices used in $P$ are greedily chosen, we can assume that $P$ won't use any existing chosen vertices. We choose a common unused neighbor of $x_2,x_5$ in $U_2-W$, denoted by $y_7$. Let $U_1^{'}=U_1-V(P)-\{x,y,x_1,x_2\}$, $U_2^{'}=(U_2-V(P)-\{y_1,y_2,y_3,y_4,y_7\})\cup \{y_6\}$ and $n^{'}=|U_1^{'}|=|U_2^{'}|$. By Lemma \ref{chen} and $n$ is sufficiently large, $(U_1^{'},U_2^{'})$ is a $(\sqrt{\alpha_1+11\alpha_2},\frac{2}{3})$-super-regular pair. Applying Lemma \ref{blow2} to the pair $(U_1^{'},U_2^{'})$, we can construct two paths $P_1$ and $P_2$ such that the end vertices of $P_1$ are $x_4, y_6$, the end vertices of $P_2$ are $x_3, y_5$ and $|V(P_i)|=l^i$ $(i=1,2)$. Let $P_3:=x_5y_7x_2y_2y_1xx_1y_3x_3$ and $P_4:=x_4y_4yy_5$. Then
$$C=P_1\cup P_2\cup P_3\cup P_4\cup P$$
is a Hamiltonian cycle in $G$ (see Figure 4, Sub-case 2(a)). We fix $l^1=\frac{n}{2}-1-(|V(P)|+5-1)=\frac{n}{2}-|V(P)|-5$ and $l^2=2n^{'}-l^1$. It is not hard to see that $x$ and $y$ have distance $\frac{n}{2}$ on $C$. Here we omit all the calculations about $\alpha$, because it is almost same as sub-case 1.

Now assume that $y$ is the only neighbor of $x$ in $U_1$ but $y$ has a neighbor which is not $x$ in $U_1$, then the proof is similar to the proof in the last paragraph if we deal with $y$ first. We assume that $y$ is the only neighbor of $x$ in $U_1$ and $x$ is the only neighbor of $y$ in $U_1$. We choose a neighbor of $x$ in $U_2-W$, denoted by $y_1$. Since $deg_G(y_1)\geq \frac{n}{2}+1$, $y_1$ has a neighbor in $U_2$, denoted by $y_2$. We choose another unused neighbor of $x$ in $U_2-W$, denoted by $y_3$, and a neighbor of $y_3$ in $U_1-W$, denoted by $x_1$. Since $deg_G(x_1)\geq \frac{n}{2}+1$, $x_1$ has a neighbor in $U_1$, denoted by $x_2$. By our assumption, $x_2$ should not be either of $x$ and $y$. We choose an unused neighbor of $y_2$ in $U_1-W$, denoted by $x_3$, an unused neighbor of $x_2$ in $U_2-W$, denoted by $y_4$, and an unused neighbor of $y_4$ in $U_1-W$, denoted by $x_4$. By the same method of Claim \ref{ec1c}, we construct a path $P$ with end vertices $x_5\in U_1$ and $y_5\in U_2$ such that $P$ contains all the unused vertices of $W$ and $|V(P)|\leq 4|W|$. We choose two unused neighbors of $y$ in $U_2-V(P)$, denoted by $y_6,y_7$, and choose a neighbor of $y_6$ in $U_1-V(P)$, denoted by $x_6$, and choose a common unused neighbor of $x_3,x_5$ in $U_2-V(P)$, denoted by $y_8$. Let $U_1^{'}=U_1-V(P)-\{x,y,x_1,x_2,x_3\}$ and $U_2^{'}=(U_2-V(P)-\{y_1,y_2,y_3,y_4,y_6,y_8\})\cup \{y_5\}$ and $n^{'}=|U_1^{'}|=|U_2^{'}|$. By Lemma \ref{chen} and $n$ is sufficiently large, $(U_1^{'},U_2^{'})$ is a $(\sqrt{\alpha_1+11\alpha_2},\frac{2}{3})$-super-regular pair. Applying Lemma \ref{blow2} to the pair $(U_1^{'},U_2^{'})$, we can construct two paths $P_1$ and $P_2$ such that the end vertices of $P_1$ are $x_6, y_5$, the end vertices of $P_2$ are $x_4, y_7$ and $|V(P_i)|=l^i$ $(i=1,2)$. Let $P_3:=x_5y_8x_3y_2y_1xy_3x_1x_2y_4x_4$ and $P_4:=x_6y_6yy_7$. Then
$$C=P_1\cup P_2\cup P_3\cup P_4\cup P$$
is a Hamiltonian cycle in $G$ (see Figure 4, Sub-case 2(b)). Let $l^1=\frac{n}{2}-|V(P)|-5$ and $l^2=2n^{'}-l^1$. Thus $x$ and $y$ have distance $\frac{n}{2}$ on $C$. We can say all the choices of the vertices are possible because of the minimum degree of $G^*$. We omit all these similar calculations here.

If $W$ is empty, we take the path $P$ be an edge. The rest proof is the same as above.

At the last we need to consider the case when $\frac{n}{2}$ is even. Actually if $x,y$ are in the same part of $G^*$, the proof is similar to sub-case 1, and if $x,y$ are in the different parts of $G^*$, the proof is similar to sub-case 2.

\subsection{Extremal case 2}
Suppose $G$ is a graph on $n$ vertices with $\delta (G)\geq \frac{n}{2}+1$ and there exists a balanced partition of $V(G)$ into $V_1$ and $V_2$ such that the density $d(V_1,V_2)\leq \alpha$. We suppose $\alpha \leq (\frac{1}{9})^3$. Let $\alpha_1=\alpha ^{\frac{1}{3}}$ and $\alpha_2=\alpha ^{\frac{2}{3}}$.

We also need a similar lemma as Lemma \ref{ec1}.

\begin{lemma}\label{ec2}
If $G$ is in extremal case 2, then $V(G)$ can be partitioned into two balanced parts $U_1$ and $U_2$ such that

(a) there is a set $W_1\subseteq U_1$ (resp. $W_2\subseteq U_2$) such that there exist vertex-disjoint 2-paths in $G[U_1]$ (resp. $G[U_2]$) with the vertices of $W_1$ (resp. $W_2$) as the middle vertices in each 2-path and $|W_1|\leq \alpha_2\frac{n}{2}$ (resp. $|W_2|\leq \alpha_2\frac{n}{2}$);

(b) $deg_{G[U_1]}(u)\geq (1-\alpha_1-2\alpha_2)\frac{n}{2}$ for all $u\in U_1-W_1$ and $deg_{G[U_2]}(v)\geq (1-\alpha_1-2\alpha_2)\frac{n}{2}$ for all $v\in U_2-W_2$.
\end{lemma}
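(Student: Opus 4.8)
The plan is to mirror the proof of Lemma \ref{ec1}, adapting it to the situation where the pair $(V_1,V_2)$ is \emph{sparse} rather than dense. The guiding principle is a duality: in Extremal case~2 the bipartite graph between $V_1$ and $V_2$ has few edges, so each $G[V_i]$ is close to complete, and the roles of "inside a part" and "between the parts" are exchanged relative to Extremal case~1. First I would define, for $i=1,2$,
\begin{equation*}
V_i^*=\{v\in V_i:\ deg(v,V_i)\geq (1-\alpha_1)\tfrac{n}{2}\},
\end{equation*}
and show $|V_i-V_i^*|\leq \alpha_2\tfrac{n}{2}$ by the same counting estimate as in Lemma \ref{ec1}: if too many vertices of $V_i$ had more than $\alpha_1\tfrac{n}{2}$ neighbours outside $V_i$, then $d(V_1,V_2)$ would exceed $\alpha$, a contradiction. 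Vertices of $V_i-V_i^*$ that instead have large degree into $V_{3-i}$ get reassigned to $V_{3-i}^*$; the remaining low-interaction vertices form a small set $V_0$ with $|V_0|\leq \alpha_2 n$, and every $u\in V_0$ has at least $(\alpha_1-\alpha_2)\tfrac{n}{2}$ neighbours inside each $V_i'$ (using $\delta(G)\geq \tfrac n2+1$ to force many edges somewhere, and the defect of $V_0$ from being in either $V_i^*$).

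Next I would rebalance. After the reassignment we have sets $V_1',V_2'$ and a leftover $V_0$; as in Lemma \ref{ec1} there are two cases according to whether $|V_1'|,|V_2'|\leq \tfrac n2$ or one of them exceeds $\tfrac n2$. In the first (easy) case we distribute $V_0$ between the two parts to balance them, letting $W_i$ be the vertices of $V_0$ landing in $U_i$; since each such vertex has $\geq(\alpha_1-\alpha_2)\tfrac n2\gg 2|W_i|$ neighbours \emph{inside} the part it is placed in (here is the key difference from Extremal case~1: the 2-paths now live in $G[U_i]$, not in the bipartite graph), we greedily pick two private neighbours in $U_i$ for each vertex of $W_i$, producing the required vertex-disjoint $2$-paths. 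The degree bound $deg_{G[U_i]}(v)\geq(1-\alpha_1-2\alpha_2)\tfrac n2$ for $v\notin W_i$ follows from the definition of $V_i^*$ minus the at most $\alpha_2\tfrac n2$ vertices moved in. In the second case, say $|V_1'|>\tfrac n2$, I would push the overflow from $U_1$ into $U_2$; if the overflow vertices have high degree \emph{into} $V_2'$ we simply move them there and they become candidates for $W_2$'s middle vertices, otherwise they have high degree inside $V_1'$, and we run the same maximal-family-of-$2$-paths argument in $G[V_1']$ as in Lemma \ref{ec1} (inequality (\ref{s}) carries over verbatim with the same constants) to absorb the excess $t\leq\alpha_2\tfrac n2$ vertices as middle vertices of $2$-paths inside $U_1$, then move those middle vertices to $U_2$ to balance. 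Throughout, $x$ and $y$ can be assumed not to be endpoints of any chosen $2$-path because the choices are greedy.

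The main obstacle, as in Lemma \ref{ec1}, is bookkeeping rather than a deep idea: one must track the four or five slightly different sub-cases (which part overflows, whether the overflow vertices interact more with their own part or the other, and whether $V_0$ is empty) and verify in each that the degree slack after all the moves is still at least $(1-\alpha_1-2\alpha_2)\tfrac n2$ and that $|W_i|\leq\alpha_2\tfrac n2$. The constants are forced by the same chain of inequalities $\alpha\leq(1/9)^3$, $\alpha_1=\alpha^{1/3}$, $\alpha_2=\alpha^{2/3}$, $\alpha_1\geq 9\alpha_2$ used in Extremal case~1, so no new parameter constraints should arise. I would organise the write-up to parallel the proof of Lemma \ref{ec1} line by line, pointing out at each step that "inside the part" replaces "between the parts", so that the reader can check the translation quickly; the only genuinely reused piece of nontrivial counting is the maximal $2$-path family bound (\ref{s}), which applies unchanged to $G[V_1']$.
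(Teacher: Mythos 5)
Your setup (defining $V_i^*$ by large degree \emph{inside} $V_i$, reassigning the exceptional vertices, forming $V_0$, and handling the balanced case by greedily attaching each $V_0$-vertex to two private neighbours inside the part it joins) matches the paper's proof, and the sub-case where enough overflow vertices have degree $\geq\alpha_1\frac n2$ into $V_2'$ is also handled the same way. The gap is in the final rebalancing sub-case, where $|V_1'|>\frac n2$ and fewer than $|V_1'|-\frac n2$ overflow vertices have high degree into $V_2'$. You propose to find vertex-disjoint $2$-paths \emph{inside} $G[V_1']$ with the excess vertices as middles and then move those middles to $U_2$. This fails the lemma's requirement (a): after the move, each such $2$-path has its middle vertex in $U_2$ but both end vertices still in $U_1$, so it is not a $2$-path in $G[U_2]$. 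Worse, the defect cannot be repaired afterwards: the vertices remaining in $U_1=V_1'-V_1^0$ are precisely those with $deg(v,V_2')<\alpha_1\frac n2$, and since $|V_1'|>\frac n2$ such a vertex may have \emph{all} of its neighbours inside $V_1'$, i.e.\ essentially no neighbours in $U_2$ at all, so no $2$-path in $G[U_2]$ through it exists. Your claim that inequality (\ref{s}) ``carries over verbatim'' is also off: that inequality rests on $\delta(G[U_1])\geq t+1$ and $\Delta(G[U_1])\leq\alpha_1\frac n2$, a sparse-graph situation, whereas in extremal case 2 the graph $G[V_1']$ is dense and the two degree bounds are reversed.

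The paper's fix is to find the $2$-paths in the \emph{bipartite} graph $(U_1,V_2')$, with the to-be-moved vertex of $U_1$ as the middle and both ends in $V_2'$; then after moving the middles to $U_2$ the whole $2$-path lies in $G[U_2]$. The counting that guarantees $s>t$ such $2$-paths uses that every vertex of $V_2'$ has at least $t+2$ neighbours in $U_1$ (from $\delta(G)\geq\frac n2+1$ and $|U_2|=\frac n2-t$) while every vertex of $U_1$ has fewer than $\alpha_1\frac n2$ neighbours in $V_2'$ (by the definition of $V_1^0$); this is the bipartite analogue of (\ref{s}) and requires a slightly different computation, namely $\delta^{*}(|V_2'|-2s)\leq e(V_2'-S_2,U_1)\leq s(\Delta^*-2)+(\frac n2+t-s)$. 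You would need to replace your ``run the argument in $G[V_1']$'' step with this bipartite version for the proof to go through.
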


\begin{proof}
The argument is similar to the proof of Lemma \ref{ec1}. For some similar claims, we just give them without proofs.

For $i=1,2$, let $V_i^*=\{v\in V_i: deg(v,V_{i})\geq (1-\alpha_1)\frac{n}{2}\}$. We can claim that $|V_i-V_i^*|\leq \alpha_2\frac{n}{2}$ by the density condition.

For any vertex $v\in V_i-V_i^*$, if $deg(v,V_{3-i})\geq (1-\alpha_1)\frac{n}{2}$, we also add it to $V_{3-i}^*$. We denote the final two sets by $V_i^{'}$ ($i=1,2$) and let $V_0=V-V_1^{'}-V_2^{'}$. Thus $|V_0|\leq \alpha_2n$. For every vertex $v$ in $V_i^{'}$, $deg(v,V_i^{'})\geq (1-\alpha_1)\frac{n}{2}-\alpha_2\frac{n}{2}$ ($i=1,2$). For every vertex $u$ in $V_0$, $deg(u,V_i^{'})\geq (\frac{n}{2}-(1-\alpha_1)\frac{n}{2})-\alpha_2\frac{n}{2}\geq (\alpha_1-\alpha_2)\frac{n}{2}$ ($i=1,2$).

First, we assume $|V_1^{'}|,|V_2^{'}|\leq \frac{n}{2}$. We add all the vertices in $V_0$ to $V_1^{'}$ and $V_2^{'}$ such that the final two sets are of the same size. Denote the final two sets by $U_1$ and $U_2$. Let $W_1=U_1-V_1^{'}$ and $W_2=U_2-V_2^{'}$. So $V_0=W_1\cup W_2$. Since for each vertex $u\in W_1$, $deg(u,V_{1}^{'})\geq (\alpha_1-\alpha_2)\frac{n}{2}\geq 2\alpha_2n\geq 2|W|$, we can greedily choose two neighbors of $u$ in $V_1^{'}$ such that the neighbors of all the vertices in $W_1$ are distinct. So $W_1$ and $U_1$ are what we need. It is same to find 2-paths in $G[U_2]$. The degree conclusion also holds.

Second, without loss of generality we assume $|V_1^{'}|>\frac{n}{2}$. Let $V_1^0$ be the set of vertices $v\in V_1^{'}$ such that $deg(v,V_2^{'})\geq \alpha_1\frac{n}{2}$.

If $|V_1^0|\geq |V_1^{'}|-\frac{n}{2}$, we take $W_2$ to be the set of all vertices of $V_0$ and $|V_1^{'}|-\frac{n}{2}$ vertices of $V_1^0$ and $W_1$ to be an empty set. Let $U_1=V_1^{'}-W_2$ and $U_2=V_2^{'}\cup W_2$. So $|W_2|\leq \alpha_2\frac{n}{2}$. For every vertex $u\in W_2$, we have $deg(u,V_2^{'})\geq (\alpha_1-\alpha_2)\frac{n}{2}-\alpha_2\frac{n}{2}\geq \alpha_2n\geq 2|W_2|$. Thus we can greedily choose two neighbors of $u$ in $V_2^{'}$ such that the neighbors of all the vertices in $W_2$ are distinct. $U_1$, $U_2$, $W_1$, $W_2$ are what we need.

Now we assume $|V_1^0|< |V_1^{'}|-\frac{n}{2}$. Let $U_1=V_1^{'}-V_1^0$ and $U_2=V_2^{'}\cup V_0\cup V_1^0$. Let $t=|U_1|-\frac{n}{2}$, so $t\leq \alpha_2\frac{n}{2}$. We consider the bipartite graph $(U_1,V_2^{'})$. Suppose that $(U_1,V_2^{'})$ has a biggest family of vertex-disjoint 2-paths on a vertex set $S$, such that the middle vertices of these 2-paths are in $U_1$ and the end vertices of these 2-paths are in $V_2^{'}$. Let $S=S_1\cup S_2$ with the middle vertex set $S_1\subseteq U_1$ and the end vertex set $S_2\subseteq V_2^{'}$. Suppose $|S_1|=s$, $|S_2|=2s$. We use $\delta^{*}$ to denote the minimum degree of vertices of  $V_2^{'}$ in $(U_1,V_2^{'})$. So $\delta^{*}\geq \frac{n}{2}+1-(\frac{n}{2}-t-1)=t+2$. We use $\Delta^{*}$ to denote the maximum degree of vertices of $U_1$ in $(U_1,V_2^{'})$. So $\Delta^{*}< \alpha_1\frac{n}{2}$. Then $$\delta^{*}(|V_2^{'}|-2s)\leq e(V_2^{'}-S_2,U_1)\leq s(\Delta^*-2)+(\frac{n}{2}+t-s).$$

By some calculations, we can get
\begin{align*}
s &\geq \frac{(t+2)|V_2^{'}|-(\frac{n}{2}+t)}{\Delta^*+2t+1}\\
  &\geq \frac{(t+2)(1-\alpha_2)\frac{n}{2}-\frac{n}{2}-\alpha_2\frac{n}{2}}{\alpha_1\frac{n}{2}+1+2\alpha_2\frac{n}{2}} \\
  &=\frac{(t+1)(1-\alpha_2)\frac{n}{2}-2\alpha_2\frac{n}{2}}{(\alpha_1+2\alpha_2)\frac{n}{2}+1}
\end{align*}

Since $n$ can be sufficiently large, we can conclude $s>t$.

We pick $t$ vertex-disjoint 2-paths with the middle vertex set $S_1\subseteq U_1$ and move the vertices of $S_1$ into $U_2$. Now we get $|U_1|=|U_2|=\frac{n}{2}$. Let $W_2=V_0\cup V_1^0\cup S_1$ and $W_1$ be an empty set. For every vertex $u\in V_0\cup V_1^0$, $deg_{G[U_2]}(u)\geq (\alpha_1-\alpha_2)\frac{n}{2}-\alpha_2\frac{n}{2}\geq 2|W_2|$. We can greedily find disjoint 2-paths in $G[V_0\cup V_1^0,V_2^{'}]$ with all the vertices of $V_0\cup V_1^0$ as middle vertices such that these 2-paths are all disjoint with the existing 2-paths. $U_1$, $U_2$, $W_1$, $W_2$ are what we need.
\end{proof}

For a graph $G$ in extremal case 2, we apply Lemma \ref{ec2} to $G$ and get a partition of $V(G)=U_1\cup U_2$ with the properties in Lemma \ref{ec2}.

First, assume $x$ and $y$ are in different parts in the partition of $V(G)$, without loss of generality, we say that $x\in U_1$ and $y\in U_2$. Since $\delta(G)\geq \frac{n}{2}+1$, $x$ (resp. $y$) should have at least two neighbors in $U_2$ (resp. $U_1$). Denote a neighbor of $x$ in $U_2$ by $x_1$ and a neighbor of $y$ in $U_1$ by $y_1$ such that $x_1\not =y$ and $y_1\not =x$. Since the 2-paths are all greedily chosen in Lemma \ref{ec2}, we can assume that $x,y,x_1,y_1$ are not the end vertices of those 2-paths.

\begin{claim}\label{hp}
There is a Hamiltonian path in $G[U_1]$ with end vertices $x$ and $y_1$.
\end{claim}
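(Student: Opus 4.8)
The plan is to use that, by Lemma~\ref{ec2}, the graph $H:=G[U_1]$ is very close to complete: it has $|U_1|=\tfrac{n}{2}$ vertices, every $v\in U_1\setminus W_1$ satisfies $\deg_H(v)\ge(1-\alpha_1-2\alpha_2)\tfrac{n}{2}$, each $w\in W_1$ is the centre of a vertex-disjoint $2$-path $a_w w b_w$ with $a_w,b_w\in U_1\setminus W_1$, and $|W_1|\le\alpha_2\tfrac{n}{2}$. Since $\alpha$ (hence $\alpha_1,\alpha_2$) is small and $n$ is large, the non-exceptional vertices beat the Dirac/Ore thresholds by a wide margin, so the heart of the matter is to absorb the vertices of $W_1$ into one short path by means of their prescribed $2$-paths, and then to stitch that path into a Hamiltonian path of $H$ whose ends are exactly $x$ and $y_1$.

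If $W_1=\emptyset$, then $H$ has minimum degree $\ge(1-\alpha_1-2\alpha_2)\tfrac{n}{2}$, hence (by Ore's theorem, or directly since $H$ is nearly complete) $H$ is Hamiltonian-connected and the desired path exists at once; so assume $W_1\ne\emptyset$ and first treat the case $x,y_1\notin W_1$. Exactly as in the proof of Claim~\ref{ec1c}, list $W_1=\{w_1,\dots,w_s\}$ with $2$-paths $a_iw_ib_i$, and for $1\le i\le s-1$ greedily pick a vertex $c_i\in U_1\setminus W_1$ adjacent to both $b_i$ and $a_{i+1}$; such a $c_i$ exists because $b_i$ and $a_{i+1}$ have at least $(1-2\alpha_1-4\alpha_2)\tfrac{n}{2}$ common neighbours in $U_1$, and all the $c_i$ can be chosen distinct and different from $x$ and $y_1$. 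This produces a path $P_W=a_1w_1b_1c_1a_2w_2b_2\cdots a_sw_sb_s$ that contains every vertex of $W_1$, has $|V(P_W)|\le 4\alpha_2 n$, and whose two endpoints $a_1,b_s$ lie in $U_1\setminus W_1$ and therefore have large degree in $H$.

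Now I would stitch. Put $D:=U_1\setminus V(P_W)$, so $x,y_1\in D$ and each vertex of $D$, as well as $a_1$ and $b_s$, has at least $(1-\alpha_1-2\alpha_2)\tfrac{n}{2}-|V(P_W)|\ge(1-\alpha_1-6\alpha_2)\tfrac{n}{2}$ neighbours in $D$. Add an auxiliary vertex $u^{*}$ joined precisely to the set $N_H(a_1)\cap N_H(b_s)\cap D$, which has size at least $(1-2\alpha_1-12\alpha_2)\tfrac{n}{2}>\tfrac{|D|+1}{2}$. In the graph $H^{*}:=H[D\cup\{u^{*}\}]$ every vertex has degree exceeding half of $|V(H^{*})|$ by a wide margin, so $H^{*}$ is Hamiltonian-connected; hence there is a Hamiltonian path $Q$ of $H^{*}$ from $x$ to $y_1$. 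Let $\alpha^{\star}$ and $\beta^{\star}$ be the two neighbours of $u^{*}$ on $Q$; both lie in $N_H(a_1)\cap N_H(b_s)$. Deleting $u^{*}$ from $Q$ leaves a path from $x$ to $\alpha^{\star}$ and a path from $\beta^{\star}$ to $y_1$ that together cover $D$, and concatenating $x\rightsquigarrow\alpha^{\star}$, the edge $\alpha^{\star}a_1$, the path $P_W$ from $a_1$ to $b_s$, the edge $b_s\beta^{\star}$, and $\beta^{\star}\rightsquigarrow y_1$ yields a Hamiltonian path of $G[U_1]$ with ends $x$ and $y_1$.

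Finally, if $x\in W_1$ (or $y_1\in W_1$) I would modify the construction by hanging $x$ at one end of the string $P_W$ through a single edge $xa_x$ of its $2$-path and returning the other endpoint $b_x$ of that $2$-path to $D$; this places $x$ already at an end of $P_W$, and the stitching step is then carried out so that this end of $P_W$ becomes an end of the final path. I expect the main obstacle to be exactly this interface question — guaranteeing that the two loose ends $a_1,b_s$ of $P_W$ can be reattached so that the result is a single path from $x$ to $y_1$ — which is the reason for routing both ends through the auxiliary vertex $u^{*}$ (so that its two neighbours on $Q$ are automatically common neighbours of $a_1$ and $b_s$) rather than trying to force $a_1$ and $b_s$ to be consecutive on a Hamiltonian path of $G[D]$. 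Everything else is routine degree bookkeeping, valid because $\alpha$ is small and $n$ is large.
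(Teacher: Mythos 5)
Your proposal is correct and follows the paper's strategy in its two essential steps: absorb the exceptional set $W_1$ into one short path by stringing the prescribed $2$-paths together with greedily chosen connectors $c_i$ (exactly as in Claim~\ref{ec1c}), and then finish with the fact that the remaining graph is so dense ($\delta\geq\frac{N+1}{2}$ with a huge margin) that it is Hamiltonian-connected. The only real difference is the splicing device. The paper anchors the string in advance: it makes a neighbour $v$ of $y_1$ the last vertex of the string (by taking the final connector to be a common neighbour of $b_t$ and $v$), picks a neighbour $u$ of $x$ outside $W_1$, and takes a Hamiltonian path of the leftover graph from $u$ to the string's free end $a_1$, yielding $x,u,\dots,a_1,\dots,v,y_1$. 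You instead leave both ends of the string loose, contract it to an auxiliary vertex $u^{*}$ adjacent to $N(a_1)\cap N(b_s)\cap D$, take a Hamiltonian $x$--$y_1$ path of the contracted graph, and re-expand $u^{*}$; since $u^{*}$ is internal, its two path-neighbours are automatically common neighbours of $a_1$ and $b_s$ and the expansion is legitimate. Both devices invoke the same Hamiltonian-connectivity theorem and cost the same; yours is a little more symmetric, while the paper's handles the boundary case $x\in W_1$ or $y_1\in W_1$ more uniformly (it simply removes the $2$-paths of $x$ and $y_1$ from the string and joins each of $x,y_1$ by a single edge to a high-degree neighbour, which suffices because they are endpoints of the desired path). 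Your sketch for $x\in W_1$ is terser but repairable by the same idea, so I see no genuine gap.
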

\begin{proof}
Whether $x$ and $y_1$ are in $W_1$ or not, we can find a neighbor of $x$ in $U_1-W_1$, denoted by $u$, and a neighbor of $y_1$ in $U_1-W_1$, denoted by $v$. Suppose $W_1-\{x,y_1\}=\{w_1, w_2,...,w_t\}$ and the two end vertices of the 2-path containing $w_i$ are $a_i,b_i$. Since $deg_{G[U_1]}(a_i)\geq (1-\alpha_1-2\alpha_2)\frac{n}{2}$ and $deg_{G[U_1]}(b_i)\geq (1-\alpha_1-2\alpha_2)\frac{n}{2}$, we can greedily choose $c_i\in U_1$ which is a common neighbor of $a_{i+1}$ and $b_i$ ($1\leq i\leq t-1$). Moreover we can choose all these $c_i$ to be distinct. We also greedily choose $c_t$ which is a common neighbor of $b_t$ and $v$. Then $P_1=a_1w_1b_1c_1a_2w_2b_2c_2...b_{t-1}c_{t-1}a_{t}w_{t}b_{t}c_{t}v$ is a path containing all the vertices of those 2-paths (except the 2-paths containing $x,y_1$, if $x,y_1$ are in $W_1$). $|V(P_1)|=4t+1\leq 4\alpha_2\frac{n}{2}+1$.

Let $U^*=(U_1-V(P)-\{x,y_1\})\cup \{a_1\}$. We consider the induced subgraph $G[U^*]$. For any vertex $w\in U^*$, $deg_{G[U^*]}(w)\geq (1-\alpha_1-2\alpha_2)\frac{n}{2}-4\alpha_2 \frac{n}{2}-2$. Since $n$ is sufficiently large and $\alpha \leq (\frac{1}{9})^3$, $deg_{G[U^*]}(w)\geq (1-\alpha_1-7\alpha_2)\frac{n}{2}>\frac{n}{4}+1\geq \frac{|U^*|}{2}+1$ for any vertex $w\in U^*$. So $G[U^*]$ is Hamiltonian-connected. We can find a path $P_2$ in $G[U^*]$ with end vertices $u,a_1$ containing all the vertices of $U^*$. Then $H_1=\{xu\}\cup P_1\cup P_2\cup \{vy_1\}$ is a Hamiltonian path in $G[U_1]$ with end vertices $x$ and $y_1$.
\end{proof}

By the same method, we can construct a Hamiltonian path $H_2$ in $G[U_2]$ with end vertices $y$ and $x_1$. So
$$C=\{xx_1,yy_1\}\cup H_1\cup H_2$$
is a Hamiltonian cycle in $G$ such that $dist_C(x,y)=\frac{n}{2}$ (see Figure 5 (a)).

\begin{figure}[htbp]
\centering
\includegraphics[width=10cm]{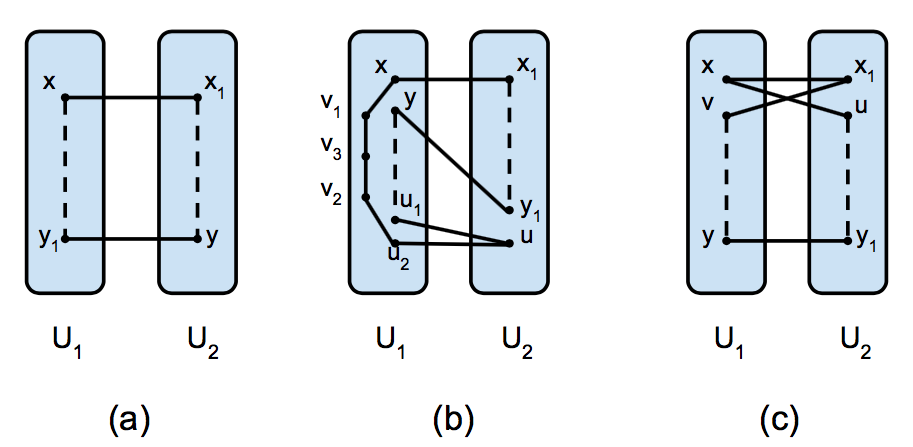}
\caption{Extremal case 2.}
\end{figure}

Now assume $x$ and $y$ are in the same part in the partition of $V(G)$, without loss of generality, say $x,y\in U_1$. Since $\delta(G)\geq \frac{n}{2}+1$, $x$ and $y$ should have at least two neighbors in $U_2$. We choose a neighbor of $x$ in $U_2$, denoted by $x_1$, and a neighbor of $y$ in $U_2$, denoted by $y_1$, such that $x_1\not =y_1$. Since the 2-paths are all greedily chosen in Lemma \ref{ec2}, we can assume that $x,y,x_1,y_1$ are not the end vertices of those 2-paths.

Assume there is a vertex $u\in U_2-\{x_1,y_1\}$ such that it has two neighbors $u_1,u_2\in U_1-\{x,y\}$. We also assume that $u_1,u_2$ are not the end vertices of the 2-paths. Whether $x,u_2$ are in $W_1$ or not, we claim that we can find a path of length at most four with end vertices $x$ and $u_2$ in $G[U_1]$. Indeed, the worst case is when $x,u_2$ are both in $W_1$. We can find a neighbor of $x$ in $U_1-W_1$, denoted by $v_1$, and a neighbor of $u_2$ in $U_1-W_1$, denoted by $v_2$. We choose a common neighbor of $v_1,v_2$ in $G[U_1]$, denoted by $v_3$. $xv_1v_3v_2u_2$ is a path of length four with end vertices $x$ and $u_2$ in $G[U_1]$. Then we can construct a path with end vertices $x_1$ and $u_1$, denote it by $P_1=x_1xv_1v_3v_2u_2uu_1$. By the same method in the proof of Claim \ref{hp}, we construct a path $P_2$ in $G[U_1]$ with end vertices $u_1$ and $y$, containing all the vertices of $U_1-V(P_1)$. In $G[U_2]$, by the same method in the proof of Claim \ref{hp}, we can construct a path $P_3$ with end vertices $x_1$ and $y_1$, containing all the vertices of $U_2-\{u\}$. So
$$C=P_1\cup P_2\cup P_3\cup \{yy_1\}$$
is a Hamiltonian cycle in $G$ such that $dist_C(x,y)=\frac{n}{2}$ (see Figure 5 (b)).

Assume there is a vertex $u\in U_2-\{x_1,y_1\}$ such that only one of the neighbors of $u$ in $U_1$ is equal to $x$ or $y$, without loss of generality, we assume that $u_1,u_2$, the two neighbors of $u$, satisfy that $u_2=x$ but $u_1\not =y$. Let $P_1=x_1xuu_1$. The rest construction is the same as in the last paragraph.

At last we assume that in $U_1$, the neighbors of all the vertices of $U_2-\{x_1,y_1\}$ are $x$ and $y$. That means any vertex in $U_1-\{x,y\}$ is adjacent to $x_1$ and $y_1$. We choose a neighbor of $x$ in $U_2$, denoted by $u$, and a neighbor of $x_1$ in $U_1$, denoted by $v$. We construct a path $P_1=vx_1xu$. By the same method in the proof of Claim \ref{hp}, we can construct a path $P_2$ in $G[U_1]$ with end vertices $v$ and $y$, containing all the vertices of $U_1-\{x\}$, and a path $P_3$ in $G[U_2]$ with end vertices $u$ and $y_1$, containing all the vertices of $U_2-\{x_1\}$. So
$$C=P_1\cup P_2\cup P_3\cup \{yy_1\}$$
is a Hamiltonian cycle in $G$ such that $dist_C(x,y)=\frac{n}{2}$ (see Figure 5 (c)).

\end{document}